\newtheorem{theorem}{Theorem}[section]
\newtheorem{lemma}[theorem]{Lemma}
\newtheorem{proposition}[theorem]{Proposition}
\newtheorem{corollary}[theorem]{Corollary}
\theoremstyle{definition}
\newtheorem{definition}[theorem]{Definition}
\theoremstyle{remark}
\newtheorem{remark}[theorem]{Remark}
\numberwithin{equation}{section}
\title { IzbaĂ„Ĺ¤eno }
\begin{document}
\author{Stefan Ivkovi\'{c} }
 
\vspace{15pt}
	
\title{Spectral Fredholm theory and transitivity in Banach bimodules}

\maketitle
\begin{abstract}
	
	In this paper, we extend Fredholm theory in von Neumann algebras established by Breuer to spectral Fredholm theory. We consider 2 by 2 upper triangular operator matrices with coefficients in a von Neumann algebra and give the relationship between the generalized essential spectra in the sense of Breuer of such matrices and of their diagonal entries. Next, we prove that if a generalized Fredholm operator in the sense of Breuer has $0$ as an isolated point of its spectrum, then the corresponding spectral projection is finite. Finally, we define generalized B-Fredholm operator in a von Neumann algebra as a generalization in the sense of Breuer of B-Fredholm operators on Hilbert and Banach spaces defined by Berkani. We provide sufficient conditions under which a sum of a generalized B-Fredholm operator and a finite operator in a von Neumann algebra is again a generalized B-Fredholm operator. Finally, motivated by the connections between supercyclicity and semi-Fredholm theory, in the last section of the paper we characterize disjoint supercyclic and disjoint Furstenberg semi-transitive operators on a large class of Banach bimodules. 
\end{abstract}

\vspace{15pt}

\begin{flushleft}
	\textbf{Keywords} Fredholm operator, von Neumann algebra, essential spectrum, supercyclicity, Furstenberg transitivity, Banach bimodule 
	\end{flushleft} 

\vspace{15pt}

\begin{flushleft}
	\textbf{Mathematics Subject Classification (2010)} Primary MSC 47A53; Secondary MSC 46L08.
\end{flushleft}

\vspace{30pt}

\section{Introduction}

The Fredholm and semi-Fredholm theory on Hilbert and Banach spaces started by studying the integral equations introduced in the pioneering work by Fredholm in 1903 in \cite{F}. After that, the abstract theory of Fredholm and semi-Fredholm operators on Hilbert and Banach spaces was further developed in numerous papers and books such as \cite{AP1}, \cite{AP2}. 
In addition to classical semi-Fredholm theory on Hilbert and Banach spaces, several generalizations of this theory have been considered. Breuer for example started the development of Fredholm theory in von Neumann algebras as a generalization of the classical Fredholm theory for operators on Hilbert spaces. In \cite{BR} and \cite{BR2} he introduced the notion of a Fredholm operator in a von Neumann algebra and established its main properties. In \cite{BJMA} we went further in this direction and extended Breuer`s Fredholm theory to semi-Fredholm and semi-Weyl theory in von Neumann algebras. The interest for considering these generalizations comes from the theory of pseudo differential operators acting on manifolds. The classical theory can be applied in the case of compact manifolds, but not in the case of non-compact ones. Even operators on Euclidian spaces are hard to study, for example Laplacian is not Fredholm.  Orthogonal projections onto kernels and cokernels of many bounded linear operators on Hilbert spaces are not finite rank projections in the classical sense, but they are still finite projections in an appropriate von Neumann algebra. Therefore, many operators that are not semi-Fredholm in the classical sense may become semi-Fredholm in a more general sense if we consider them as elements of an appropriate von- Neumann algebra. Hence, by studying these generalized semi-Fredholm operators, we get a proper extension of the classical semi-Fredholm theory to new classes of operators. This is in fact the main reason for considering these generalizations. \\

The main aim of this paper is to extend Fredholm theory in von Neumann algebras established in \cite{BR} and \cite{BR2} to spectral Fredholm theory in von Neumann algebras generalizing in this setting the results from the classical spectral semi-Fredholm theory for operators on Hilbert and Banach spaces. In the first result in Section 4 we consider 2 by 2 upper triangular operator matrices with coefficients in a von Neumann algebra and describe the relationship between the essential spectra of such matrices and of their diagonal entries. These essential spectra which we consider are induced by the class of generalized Fredholm operators in the sense of Breuer. Next, in Section 4 we consider isolated points of the spectrum of an operator $F$ in a von Neumann algebra $ \mathcal{A} .$ We prove that if $F$ is generalized Fredholm operator in the sense of Breuer and has $0$ as an isolated point of its spectrum, then the spectral projection corresponding to $0$ is a finite operator in $ \mathcal{A} .$ Then we introduce a concept of generalized Browder operators in $ \mathcal{A} $ as a proper generalization of the classical Browder operators on Hilbert spaces (Fredholm operators with finite ascent and descent), and we show that the class of these generalized Browder operators is a subclass of generalized Fredholm operators in the sense of Breuer. As a consequence of our result regarding finiteness of spectral projections corresponding to isolated points of the spectrum, we prove that if a generalized Fredholm operator in the sense of Breuer has $0$ as an isolated point of its spectrum, then it is generalized Browder in the sense of our definition. This is a generalization of the well known result from the classical Fredholm theory on Hilbert spaces. Finally, at the end of Section 4 we consider compressions of operators in a von Neumann algebra induced by finite projections and obtain a generalization in this setting of the results by Zemanek given in \cite{ZE} concerning the relationship between the spectra of an operator and of its compressions.\\
In Section 5 we introduce the notion of generalized B-Fredholm operators in a von Neumann algebra as a generalization in the sense of Breuer of B-Fredholm operators on Hilbert and Banach spaces defined by Berkani in \cite{BS} and \cite{BM}. We prove that a sum of a generalized B-Fredholm operator $T$ and a finite operator $F$ in a von Neumann algebra $ \mathcal{A} $ is a again a generalized B-Fredholm operator in the sense of our definition provided that there exists some $m$ such that  $ Im (T+F)^{n}  $ is closed for all $n \geq m.$ 

Now, there is a close connection between supercyclicity and semi-Fredholm theory, see \cite{cao, Fredholm-3}. On the other hand, von-Neumann algebras are just a special case of Banach bimodules. All these facts motivated us to study and characterize disjoinr supercyclic and disjoint Furstenberg semi-transitive operators on a large class of Banach bimodules, extending in this way the results from \cite{AOFA, arxiv} to some new classes of Banach bimodules.   

\section{Preliminaries}

In this section we shall recall some definitions, concepts and results from Fredholm theory in  $C^{*}-$algebras which will be needed in the rest of the paper. \\
We start with the following definitions.  

\begin{definition} \label{d 09}
	\cite[Definition 1.1]{KL} Let $\mathcal{A} $ be an unital $C^{*}-$algebra and  $\mathcal{F} \subseteq \mathcal{A} $ be a subalgebra which satisfies the following conditions:\\
	(i) $\mathcal{F} $ is a selfadjoint ideal in $\mathcal{A} ,$ i.e. for all $a \in \mathcal{A}, b \in \mathcal{F}  $ there holds $ab,ba \in \mathcal{F} ,$ and $a \in \mathcal{F} $ implies $a^{*} \in \mathcal{F} ;$\\
	(ii) There is an approximate unit $p_{\alpha}$ in the norm topology for $\mathcal{F}$ consisting of projections;\\
	(iii) If $p,q \in \mathcal{F} $ are projections, then there exists $v \in \mathcal{A} ,$ such that $vv^{*}=q $ and $v^{*}v \perp p, $ i.e. $v^{*}v + p $ is a projections as well;\\
	Such a family we shall call \textit{finite type elements}. In further, we shall denote it by $\mathcal{F} .$
\end{definition}

\begin{definition} \label{d 10} \cite[Definition 1.2]{KL}
	Let $\mathcal{A} $ be an unital $C^{*}-$algebra, and let $\mathcal{F} \subseteq \mathcal{A}$ be an algebra of finite type elements.
\end{definition}

In the set $ \text{ Proj}(\mathcal{F}) $ we define the equivalence relation:  
$$p \sim q \Leftrightarrow \exists v \in \mathcal{A} \text{ } vv^{*}=p,\text{ } v^{*}v=p, $$
i.e. Murray - von Neumann equivalence. The set $ S(\mathcal{F})=\text{Proj}(\mathcal{F})\text{ }/\sim $ is a commutative semigroup with respect to addition, and the set,$K(\mathcal{F})=G(S(\mathcal{F})),$ where $G$ denotes the Grothendic functor, is a commutative group.

\begin{definition} \label{d 11} \cite[Definition 2.1]{KL}
	Let $a \in \mathcal{A} $ and let $ p,q \in \mathcal{F}$  be projections. We say that $a$ is invertible up to pair $(p, q)$ if the element $a^{\prime}=(1-q)a(1-p) $ is invertible, i.e., if there is some $b \in \mathcal{A} $ with $b=(1-p)b(1-q) $ (and immediately $bp=0, $ $pb=0,$ $b=(1-p)b=b(1-q )$) such that 
	$$a^{\prime}b=1-q, \text{  } ba^{\prime}=1-p. $$
	We refer to such $b$ as almost inverse of $a,$ or $(p,q)-$inverse of $a.$
\end{definition}

\begin{definition} \label{d 12} \cite[Definition 2.2]{KL}
	Let $\mathcal{F}$ be finite type elements. We say that $a \in \mathcal{A} $ is of Fredholm type (or abstract Fredholm element) if there are $p,q \in \mathcal{F} $ such that $a$ is invertible up to $(p,q).$ The index of the element $a$ (or abstract index) is the element of the group $K(\mathcal{F}) $ defined by
	$$\text{ind}(a)=([p],[q]) \in K(\mathcal{F}), $$
	or less formally
	$$\text{ind}(a)=[p]-[q]. $$
\end{definition} 

Next we recall the following lemmas. 

\begin{lemma} \label{l 01}\cite[Lemma 1]{BJMA}
	Let $a \in \mathcal{A} $ and $p,q \in \mathcal{F} .$ Then $a$ is invertible up to pair $(p,q) $ if and only if $a^{*}$ is invertible up to pair $(q,p) .$ 
\end{lemma}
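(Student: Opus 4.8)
The plan is to reduce the whole statement to the elementary fact that an element of $\mathcal{A}$ is invertible if and only if its adjoint is, together with careful bookkeeping of adjoints. The starting observation is that the compression governing invertibility of $a$ up to $(p,q)$, namely $a' = (1-q)a(1-p)$, has adjoint $(a')^{*} = (1-p)^{*}a^{*}(1-q)^{*} = (1-p)a^{*}(1-q)$, where I use that $p$ and $q$ are self-adjoint projections so that $(1-p)^{*}=1-p$ and $(1-q)^{*}=1-q$. But $(1-p)a^{*}(1-q)$ is exactly the compression of $a^{*}$ relevant to the swapped pair $(q,p)$ in the sense of Definition \ref{d 11}. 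Thus the two compressions appearing in the two sides of the claimed equivalence are adjoints of each other, which already makes their simultaneous invertibility plausible; the content of the lemma is to carry the almost inverse across explicitly.

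Assuming $a$ is invertible up to $(p,q)$ with almost inverse $b$, so that $b = (1-p)b(1-q)$, $a'b = 1-q$, and $ba' = 1-p$, I would propose $b' := b^{*}$ as the almost inverse of $a^{*}$ up to $(q,p)$ and verify the three defining conditions of Definition \ref{d 11} by adjointing the corresponding conditions for $b$. Taking adjoints of $a'b = 1-q$ yields $b^{*}(a')^{*} = 1-q$, i.e. $b'(a^{*})' = 1-q$; taking adjoints of $ba' = 1-p$ yields $(a')^{*}b^{*} = 1-p$, i.e. $(a^{*})'b' = 1-p$; and taking adjoints of the support condition $b = (1-p)b(1-q)$ yields $b^{*} = (1-q)b^{*}(1-p)$, i.e. $b' = (1-q)b'(1-p)$. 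These are precisely the relations asserting that $a^{*}$ is invertible up to the pair $(q,p)$ with almost inverse $b^{*}$, which establishes the forward implication.

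Finally, the converse needs no new argument: applying the forward implication with $a^{*}$ in place of $a$ and the pair $(q,p)$ in place of $(p,q)$, and using $(a^{*})^{*} = a$, shows that $a$ is invertible up to $(p,q)$ whenever $a^{*}$ is invertible up to $(q,p)$. I do not expect any genuine obstacle here, since the proof is essentially an exercise in taking adjoints; the only point requiring care is to keep track of which projection lands on which side after conjugation, but this is exactly compensated by the interchange of the pair from $(p,q)$ to $(q,p)$, so the accounting closes up cleanly.
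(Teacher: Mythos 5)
Your argument is correct: taking adjoints carries the compression $(1-q)a(1-p)$ to $(1-p)a^{*}(1-q)$, which is exactly the compression for the pair $(q,p)$, and $b^{*}$ serves as the $(q,p)$-inverse of $a^{*}$; the converse follows by symmetry since $(a^{*})^{*}=a$. The paper itself only recalls this lemma from \cite[Lemma 1]{BJMA} without reproducing a proof, but your adjoint-passing argument is the standard one and matches what that reference does.
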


\begin{lemma} \label{l 02}\cite[Lemma 2]{BJMA}
	Let $a \in \mathcal{A} $ and $p,q,p^{\prime},q^{\prime} $ be projections in $\mathcal{A} .$ Suppose that $p,q,p^{\prime} \in \mathcal{F} .$ If $a$ is invertible up to pair $(p,q) $ and also invertible up to pair $(p^{\prime},q^{\prime}) ,$ then $q^{\prime} \in \mathcal{F} .$ 
	Similarly, if instead of $p,q,p^{\prime} $ we have that $p,q,q^{\prime} \in \mathcal{F} ,$ then we must have that $p^{\prime} $ as well.
\end{lemma}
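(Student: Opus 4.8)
The plan is to work modulo the ideal $\mathcal{F}$ and to exploit the asymmetry in the hypotheses: for the pair $(p,q)$ \emph{both} projections lie in $\mathcal{F}$, so the almost inverse $b$ will be a genuine two-sided inverse of $a$ modulo $\mathcal{F}$, whereas for the pair $(p',q')$ only the left projection $p'$ is assumed finite. Throughout I would write $e=1-p$, $f=1-q$, $e'=1-p'$, $f'=1-q'$, and take $b,b'$ to be almost inverses as in Definition \ref{d 11}, so that $b=ebf$, $b'=e'b'f'$.

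First I would convert the corner relations into congruences modulo $\mathcal{F}$. From $ba'=1-p$ together with $bf=b$ one gets $bae=e$, hence $ba=(1-p)+bap$; since $p\in\mathcal{F}$ and $\mathcal{F}$ is an ideal, $bap\in\mathcal{F}$, so $ba\equiv 1\pmod{\mathcal{F}}$. Symmetrically, using $q\in\mathcal{F}$, one obtains $ab\equiv 1\pmod{\mathcal{F}}$; thus $b$ is a two-sided inverse of $a$ modulo $\mathcal{F}$. For the primed pair only the one-sided congruence $b'a\equiv 1\pmod{\mathcal{F}}$ is available, and it uses $p'\in\mathcal{F}$ but says nothing about $q'$, which is exactly the quantity we are after.

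The crucial step is to compare the two almost inverses. From $b'\equiv b'(ab)=(b'a)b\equiv b\pmod{\mathcal{F}}$ I would conclude $b'-b\in\mathcal{F}$, and therefore $ab'=ab+a(b'-b)=1+m$ for some $m\in\mathcal{F}$. Finally I would feed in the structural identity $b'q'=0$, which is immediate from $b'=(1-p')b'(1-q')$. Multiplying $ab'=1+m$ on the right by $q'$ then gives $0=ab'q'=(1+m)q'=q'+mq'$, whence $q'=-mq'\in\mathcal{F}$, because $m\in\mathcal{F}$ and $\mathcal{F}$ is an ideal. This proves the first assertion. For the second assertion I would simply dualize using Lemma \ref{l 01}: $a^{*}$ is invertible up to $(q,p)$ and up to $(q',p')$, and the hypothesis $p,q,q'\in\mathcal{F}$ says precisely that for these two pairs both left projections $q,q'$ and the right projection $p$ of the first pair are finite, so the first assertion applied to $a^{*}$ yields $p'\in\mathcal{F}$.

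The main obstacle, and the point where the asymmetry of the hypotheses is genuinely used, is establishing $ab'=1+m$ with $m\in\mathcal{F}$: this cannot be read off from the $(p',q')$ data alone, since that would already require $q'\in\mathcal{F}$. It is therefore essential to borrow the two-sided invertibility supplied by the $(p,q)$ pair and to transfer it to $b'$ through the congruence $b'\equiv b$. Once $ab'\in 1+\mathcal{F}$ is in hand, the trivial identity $b'q'=0$ does the rest, and the ideal property of $\mathcal{F}$ closes the argument.
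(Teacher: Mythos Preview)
Your argument is correct. The key computations are sound: from $b=ebf$ and $ba'=e$ you correctly extract $bae=e$, hence $ba=1-p+bap\equiv 1\pmod{\mathcal F}$, and symmetrically $ab\equiv 1$; the comparison $b'\equiv b'ab\equiv b$ then forces $ab'\equiv 1$, and the relation $b'q'=0$ finishes. The dualisation via Lemma~\ref{l 01} for the second assertion is also fine.

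Your route is genuinely different from the one the paper relies on. The paper does not reproduce the proof here (the lemma is quoted from \cite{BJMA}), but the way it invokes ``the proof of Lemma~\ref{l 02}'' in Corollary~\ref{r8 cor2.4} reveals the mechanism used there: one takes an approximate unit $\{p_\alpha\}$ of projections for $\mathcal F$, replaces $(p',q')$ by an equivalent pair $(p'',q'')$ with $p''\le p_\alpha$, and then produces nets of projections $q_\alpha,q_\alpha''$ satisfying $q_\alpha-\tilde q\sim p_\alpha-p$, $q_\alpha''-q''\sim p_\alpha-p''$ and $q_\alpha''\sim q_\alpha$. Finiteness of $q'$ is then read off from these Murray--von Neumann equivalences. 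In other words, the paper's proof is structural/geometric, tracking explicit equivalences of projections, whereas yours is purely algebraic, working in the quotient $\mathcal A/\mathcal F$.

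What each buys: your approach is shorter, self-contained, and uses nothing beyond the ideal property of $\mathcal F$; it would go through in any ring with a two-sided ideal. The paper's approach is heavier but produces, as a by-product, the explicit chain of equivalences $\tilde q+\tilde{\tilde q}\sim\tilde q'+\tilde{\tilde q}'$ that is exactly what Corollary~\ref{r8 cor2.4} extracts and that is later used in Proposition~\ref{r12 p16}. So if one only needs the bare statement of the lemma, your proof is preferable; if one needs the finer comparison of the two $q$-projections up to finite defect, the approximate-unit argument is unavoidable.
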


From the proof of Lemma \ref{l 02} we can also deduce the following corollaries. 
\begin{corollary} \label{cor 3.3}\cite[Corollary 4]{BJMA}
	Let $ a \in \mathcal{A} .$ If $a$ is invertible both up to pair $(p,q)$ and up to pair $(p , q^{\prime}),$ then $q \sim q^{\prime} .$
\end{corollary}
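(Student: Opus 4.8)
The plan is to read the equivalence $q\sim q'$ directly off the two almost-inverses, by exhibiting $q$ and $q'$ as the range projections of two algebraically equivalent idempotents built from $a(1-p)$. Write $c=a(1-p)$, let $b$ be a $(p,q)$-inverse of $a$ and $b'$ a $(p,q')$-inverse (Definition \ref{d 11}), so that $b=(1-p)b(1-q)$, $(1-q)cb=1-q$, $b(1-q)c=1-p$, together with the three analogous relations for $b'$, $q'$. Since $b=b(1-q)$, the middle relation upgrades to $bc=1-p$, and likewise $b'c=1-p$. First I would introduce the idempotents $E:=cb$ and $E':=cb'$ and aim to show that their complements carry $q$ and $q'$ as range projections.

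The key computations are purely algebraic and use only $bc=b'c=1-p$ and $c(1-p)=c$. They give $E^{2}=c(bc)b=c(1-p)b=cb=E$, and similarly $E'^{2}=E'$, while $EE'=c(bc)b'=cb'=E'$ and $E'E=c(b'c)b=cb=E$. Setting $f_{1}:=1-E$ and $f_{2}:=1-E'$, these collapse to $f_{1}f_{2}=f_{1}$ and $f_{2}f_{1}=f_{2}$. Next I would check that $q$ is the range projection of $f_{1}$: from $(1-q)E=(1-q)cb=1-q$ one gets $(1-q)f_{1}=0$, hence $qf_{1}=f_{1}$, and from $Eq=cb q=c\,(b(1-q)q)=0$ one gets $f_{1}q=q$, so $q$ and $f_{1}$ have the same range and $q=P_{f_{1}}$. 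The identical argument with $b'$ shows $q'=P_{f_{2}}$.

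Finally, the relations $f_{1}f_{2}=f_{1}$, $f_{2}f_{1}=f_{2}$ express that $f_{1}$ and $f_{2}$ are algebraically equivalent idempotents (one may take $u=f_{1}$, $v=f_{2}$, which satisfy $uv=f_{1}$, $vu=f_{2}$ and $f_{1}u=uf_{2}=u$, $f_{2}v=vf_{1}=v$); equivalently, $\ker f_{1}=\ker f_{2}$ and $f_{2}$ restricts to a module isomorphism of $\operatorname{ran}f_{1}$ onto $\operatorname{ran}f_{2}$ with inverse $f_{1}$. The main obstacle, and the only nonelementary step, is to pass from this algebraic equivalence of idempotents to an honest Murray--von Neumann equivalence of their range projections in the $C^{*}$-algebra $\mathcal{A}$: one invokes the standard fact that algebraically equivalent idempotents in a unital $C^{*}$-algebra have Murray--von Neumann equivalent range projections, proved by polar decomposition of $f_{2}f_{1}$ (or by similarity of each idempotent to its range projection). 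Applying this to $f_{1},f_{2}$ yields a partial isometry $v\in\mathcal{A}$ with $v^{*}v=P_{f_{1}}=q$ and $vv^{*}=P_{f_{2}}=q'$, which is exactly $q\sim q'$.
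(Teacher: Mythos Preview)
Your argument is correct. The computations with $E=cb$, $E'=cb'$ all check out, the identification of $q$ and $q'$ as the (unique) range projections of the idempotents $f_{1}=1-E$ and $f_{2}=1-E'$ follows from $qf_{1}=f_{1}$, $f_{1}q=q$ (and the analogous relations for $q'$, $f_{2}$), and the relations $f_{1}f_{2}=f_{1}$, $f_{2}f_{1}=f_{2}$ indeed exhibit $f_{1}$ and $f_{2}$ as algebraically equivalent idempotents. The invocation of the standard $C^{*}$-algebraic fact that algebraically equivalent idempotents have Murray--von~Neumann equivalent range projections then finishes the proof.

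Your route is genuinely different from the paper's. In the paper this corollary is not proved directly but is extracted from the proof of Lemma~\ref{l 02} in \cite{BJMA}; as the proof of Corollary~\ref{r8 cor2.4} indicates, that machinery passes through \cite[Proposition~2.8]{KL} (normalising $q$ so that $qa(1-p)=0$) and through approximate units for $\mathcal{F}$, building auxiliary nets of projections before reading off the equivalence. Your argument avoids all of that: it is purely algebraic up to the last step, makes no use of the finite-type ideal $\mathcal{F}$ or its approximate unit, and works verbatim in any unital $C^{*}$-algebra for arbitrary projections $p,q,q'$. The price is the appeal to the ``algebraic equivalence $\Rightarrow$ Murray--von~Neumann equivalence of range projections'' fact, but this is standard and, as you note, follows from polar decomposition. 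The paper's approach, by contrast, yields more: the same proof of Lemma~\ref{l 02} simultaneously gives the finer comparison information recorded in Corollary~\ref{r8 cor2.4}, which your direct method does not immediately produce.
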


\begin{corollary} \label{r8 cor2.4}
	Let $a \in \mathcal{A} .$ If $a$ is invertible up to pairs $(p, q)$ and $(p^{\prime}, q^{\prime}) $ where $p, p^{\prime} \in \mathcal{F} ,$ then there exist projections $\tilde{q} , \tilde{q}^{\prime} , \tilde{\tilde{q}} $ and $\tilde{\tilde{q}}^{\prime} $ in $\mathcal{A} $ such that $\tilde{\tilde{q}} ,\tilde{\tilde{q}}^{\prime} \in \mathcal{F} ,$ 
	$ \tilde{q} \tilde{\tilde{q}}= \tilde{q}^{\prime} \tilde{\tilde{q}}=0, \text{  } q \sim  \tilde{q}, q^{\prime}  \sim  \tilde{q}^{\prime}, $
	$  \tilde{q}+ \tilde{\tilde{q}} \sim \tilde{q}^{\prime} + \tilde{\tilde{q}}^{\prime}    $
	and $a$ is invertible up to pairs $(p, \tilde{q}) $ and $ (p^{\prime \prime}, \tilde{q}^{\prime})$ for some projection $ p^{\prime \prime} \sim p^{\prime}.$ Similar statement hold if we instead of $p$ and $p^{\prime} $ have that $ q, q^{\prime} \in \mathcal{F},$ however, in this case there exist projections 
	$\tilde{p}, \tilde{p}^{\prime}, \tilde{ \tilde{p} }, \tilde{ \tilde{p} }^{\prime} $ in $\mathcal{A} $ such that $\tilde{ \tilde{p} }, \tilde{ \tilde{p} }^{\prime} \in \mathcal{F}, $ 
	$\tilde{p} \tilde{ \tilde{p} }=\tilde{p}^{\prime} \tilde{ \tilde{p} }^{\prime}=0,$ 
	$p \sim \tilde{p}, p^{\prime} \sim \tilde{p}^{\prime},\tilde{p}+\tilde{ \tilde{p} } \sim \tilde{p}^{\prime}+ \tilde{ \tilde{p} }^{\prime}  $ and $a$ is invertible up to pairs $( \tilde{p},q) $ and $(\tilde{p}^{\prime},q^{\prime \prime}) $ for some projection $q^{\prime \prime} \sim q^{\prime} .$
\end{corollary}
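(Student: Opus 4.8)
The statement says that the cokernel projections $q$ and $q'$ of the two factorizations of the same $a$ coincide after a \emph{finite} Murray--von Neumann stabilization, the correction terms being copies of $p'$ and $p$; it is the one-sided (cokernel) analogue of the well-definedness of the index. My plan is to reduce to the case of a common left projection, where Corollary \ref{cor 3.3} applies, by enlarging both left projections to the join $P:=p\vee p'$, and then to transport the resulting equivalence back via Kaplansky's parallelogram law. Throughout I read the orthogonality hypothesis as $\tilde q\perp\tilde{\tilde q}$ and $\tilde q'\perp\tilde{\tilde q}'$ (so that the two sums are genuine projections).

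First I would record the standard consequences of invertibility up to a pair. If $b$ is an almost inverse for $(p,q)$, then $b=(1-p)b(1-q)$ forces $ba(1-p)=1-p$ and $(1-q)ab=1-q$, so $a':=(1-q)a(1-p)$ is invertible in the corner $(1-q)\mathcal A(1-p)$; its polar decomposition gives a partial isometry $w$ with $w^{*}w=1-p$, $ww^{*}=1-q$, whence $1-p\sim 1-q$. The technical core is the enlargement step. As $P\ge p$ we have $(1-p)(1-P)=1-P$, hence $ba(1-P)=1-P$, so $a$ remains left invertible (injective) on the smaller corner $1-P$. Letting $f$ be the range projection of $a'(1-P)$ and $Q:=1-f$, one checks $fq=0$ and $(1-Q)a(1-P)=a'(1-P)$, which is invertible from the corner $1-P$ onto the corner $f$; thus $a$ is invertible up to $(P,Q)$, with $Q=q+((1-q)-f)$. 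The increment $(1-q)-f$ is Murray--von Neumann equivalent to $P-p$: the inverse $b$ carries the corner $(1-q)-f$ onto a closed (in general non-orthogonal) complement of $1-P$ inside $1-p$, and any closed complement of a subprojection is equivalent to its orthogonal complement, which here is exactly $P-p$. Applying the same construction to $(p',q')$ yields $Q'$ with $a$ invertible up to $(P,Q')$ and $Q'\sim q'\oplus(P-p')$.

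Now the two enlarged pairs share the left projection $P$, so Corollary \ref{cor 3.3} gives $Q\sim Q'$, i.e. $q\oplus(P-p)\sim q'\oplus(P-p')$. Kaplansky's formula gives $P-p=(p\vee p')-p\sim p'-(p\wedge p')$ and $P-p'\sim p-(p\wedge p')$; writing $p'=(p'-p\wedge p')\oplus(p\wedge p')$ and symmetrically for $p$, and adding $p\wedge p'$ to both sides (orthogonal sums respect Murray--von Neumann equivalence), I obtain $q\oplus p'\sim q'\oplus p$. To realise this with honest orthogonal representatives I would keep $\tilde q=q$, $\tilde q'=q'$, $p''=p'$, for which invertibility of $a$ up to $(p,\tilde q)$ and $(p'',\tilde q')$ is immediate. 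A copy $\tilde{\tilde q}\sim p'$ orthogonal to $q$ exists below $1-q$: by condition (iii) of Definition \ref{d 09} there is a copy of $p'$ below $1-p$, which $w$ transports below $1-q$; symmetrically one gets $\tilde{\tilde q}'\sim p$ below $1-q'$. Since $\mathcal F$ is an ideal, $\tilde{\tilde q},\tilde{\tilde q}'\in\mathcal F$, and $q\oplus\tilde{\tilde q}\sim q\oplus p'\sim q'\oplus p\sim q'\oplus\tilde{\tilde q}'$ is the asserted relation. The variant with $q,q'\in\mathcal F$ follows by applying this case to $a^{*}$ and invoking Lemma \ref{l 01}.

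I expect the enlargement step to be the main obstacle. One must verify that shrinking the domain projection preserves invertibility up to a pair and, above all, pin down the equivalence class of the cokernel increment as exactly $P-p$. The delicate point is that Murray--von Neumann equivalence of the cokernels does \emph{not} by itself preserve invertibility up to a pair (an arbitrary conjugation would alter $a$), and the class of $(1-q)-f$ cannot be obtained by abstract cancellation, since the ambient projections need not be finite. It must instead be identified geometrically, through the invertible element $b$ and the fact that a closed complement is equivalent to the orthogonal complement. Once this is secured, the remaining work---Kaplansky's law, the orthogonal reassembly, and the placement of the finite corrections via condition (iii)---is routine bookkeeping paralleling the proof of Lemma \ref{l 02}.
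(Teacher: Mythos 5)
Your route is genuinely different from the paper's. The paper never forms the join $p\vee p'$: it takes the common dominating projection to be a member $p_{\alpha}$ of the approximate unit of projections for $\mathcal{F}$, replaces $p'$ by an equivalent $p''\le p_{\alpha}$ via \cite[Proposition 2.8]{KL} and the machinery in the proof of Lemma \ref{l 02}, and reads off the relations $q_{\alpha}-\tilde q\sim p_{\alpha}-p$, $q''_{\alpha}-q''\sim p_{\alpha}-p''$, $q_{\alpha}\sim q''_{\alpha}$ directly from that proof; the correction terms are then $p_{\alpha}-p$ and $p_{\alpha}-p''$ rather than your cleaner $p'$ and $p$. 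What your version buys is the more transparent identity $[q]+[p']=[q']+[p]$ and correction terms that do not depend on a choice of $\alpha$; what the paper's version buys is validity in the setting in which the corollary is actually stated.

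That last point is the one genuine gap. The corollary sits in Section 2, before the standing assumption that $\mathcal{A}$ is a properly infinite von Neumann algebra: at that stage $\mathcal{A}$ is only a unital $C^{*}$-algebra equipped with an ideal $\mathcal{F}$ of finite type elements. In that generality your central objects need not exist inside $\mathcal{A}$: the join $P=p\vee p'$ and the meet $p\wedge p'$ of two projections need not belong to a $C^{*}$-algebra, the range projection $f$ of $a'(1-P)$ need not lie in $\mathcal{A}$, Kaplansky's parallelogram law has no meaning without the lattice operations, and the step ``a closed complement of a subprojection is equivalent to its orthogonal complement'' is a spatial (Hilbert-space) argument. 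By contrast, the ingredients you use that do survive in a general $C^{*}$-algebra, such as $1-p\sim 1-q$ via $v=a'(a'^{*}a'+p)^{-1/2}$ and the transport of a copy of $p'$ below $1-q$ by condition (iii) of Definition \ref{d 09}, are fine. So as written your argument proves the corollary only for von Neumann algebras --- which happens to be the only case the paper later uses (for $M_{2}(\mathcal{A})$ with $\mathcal{F}=\mathfrak{m}$), but not the case the statement claims. To repair it in general you would have to replace $p\vee p'$ by an approximate-unit projection $p_{\alpha}\in\mathcal{F}$ dominating $p$ and a copy of $p'$, and produce the enlarged right-hand projections by \cite[Proposition 2.8]{KL} rather than by range projections; that is exactly the paper's device.
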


\begin{proof}
	By \cite[Proposition 2.8]{KL} there exists projection $\tilde{q} $ in $\mathcal{A} $ such that $\tilde{q} \sim q ,$ $\tilde{q}a(1-p)=0$ and $a$ is invertible up to pair $ (p,\tilde{q} ).$ Then, by the proof of Lemma \ref{l 02}, there is an approximate unit $ \lbrace p_{\alpha} \rbrace$ for $ \mathcal{F} ,$ projections $ p^{\prime \prime}, q^{\prime \prime} $ in $\mathcal{A} $ and nets of projections $  \lbrace q_{\alpha} \rbrace$ and $  \lbrace q_{\alpha}^{\prime \prime} \rbrace$ in $\mathcal{A} $ such that $p^{\prime} \sim p^{\prime \prime} \leq p_{\alpha}$ for all $ \alpha,$ $q^{\prime \prime} \sim q^{\prime} ,$ $a$ is invertible up to pair $(p^{\prime \prime}, q^{\prime \prime}) $ and $q_{\alpha} - \tilde{q} \sim p_{\alpha} - p , q_{\alpha}^{\prime \prime} - q^{\prime \prime} \sim p_{\alpha} - p^{\prime \prime}$ $ $ and $q_{\alpha}^{\prime \prime} \sim q_{\alpha} .$ For any fixed $\alpha ,$ set $\tilde{ \tilde{q} } = q_{\alpha} - \tilde{q} ,$ $\tilde{q}^{\prime}= q^{\prime \prime} ,$ $\tilde{ \tilde{q} }^{\prime}=q_{\alpha}^{\prime \prime} - q^{\prime \prime} .$ This proves the first statement. \\
	The second statement can be proved by passing to the adjoints and applying Lemma \ref{l 01}.
\end{proof}


Now we define semi-Fredholm and semi-Weyl type elements in unital $C^{*}-$algebras.

\begin{definition} \label{d 04}\cite[Definition 5]{BJMA}
	Let $a \in \mathcal{A} .$ We say that $a$ is an upper semi-Fredholm type element if $a$ is invertible up to pair $(p, q)$ where $p \in \mathcal{F} .$ Similarly, we say that $a$ is a lower semi-Fredholm type element, however in this case we assume that $q \in \mathcal{F} $ (and not $p$).
\end{definition}

\begin{definition} \label{r10d 1.1}\cite[Definition 6]{BJMA}
	Let $p, q$ be projections in $\mathcal{A}.$ We will denote $p \preceq q $ if there exists some projection $p^{\prime} $ such that $p^{\prime} \leq q $ and $ p \sim p^{\prime} .$ 
\end{definition}

\begin{definition} \label{r10d 1.3}\cite[Definition 7]{BJMA}
	Let $a \in \mathcal{A} .$ We say that $a$ is an upper semi-Weyl type element if there exist projections $(p, q)$ in $\mathcal{A} $ such that $p \in \mathcal{F} ,$ $p \preceq q $ and $a$ is invertible up to pair $(p, q).$ Similarly we say that $a$ is a lower semi-Weyl type element, only in this case we assume that $q \in \mathcal{F} $ and $q \preceq p .$ Finally, we say that $a$ is a Weyl type element if $a$ is invertible up to pair $(p, q)$ where $p, q \in \mathcal{F} $ and $p \sim q .$ 
\end{definition}
Set\\
$\mathcal{K}\Phi_{+} (\mathcal{A})= \lbrace a \in \mathcal{A} \mid  a \text{ is upper semi-Fredholm type element }  \rbrace ,$\\
$\mathcal{K}\Phi_{-} (\mathcal{A})= \lbrace a \in \mathcal{A} \mid  a \text{  is lower semi-Fredholm type element }  \rbrace ,$\\
$\mathcal{K}\Phi (\mathcal{A})= \lbrace a \in \mathcal{A} \mid  a \text{ is  Fredholm type element }  \rbrace ,$\\
$\mathcal{K}\Phi_{+}^{-} (\mathcal{A})= \lbrace a \in \mathcal{A} \mid  a \text{  is upper semi-Weyl type element }  \rbrace ,$\\
$\mathcal{K}\Phi_{-}^{+} (\mathcal{A})= \lbrace a \in \mathcal{A} \mid a \text{  is lower semi-Weyl type element }  \rbrace ,$\\
$\mathcal{K}\Phi_{0} (\mathcal{A})= \lbrace a \in \mathcal{A} \mid  a \text{  is Weyl type element }  \rbrace .$\\
Notice that by definition we have $\mathcal{K}\Phi_{+}^{-} (\mathcal{A}) \subseteq \mathcal{K}\Phi_{+} (\mathcal{A}), \text{ } \mathcal{K}\Phi_{-}^{+} (\mathcal{A}) \subseteq \mathcal{K}\Phi_{-} (\mathcal{A}) $ and $\mathcal{K}\Phi_{0} (\mathcal{A}) \subseteq \mathcal{K}\Phi (\mathcal{A}) .$\\

From now on and in the rest of this paper, $\mathcal{A}$ denotes a properly infinite von Neumann algebra acting on a Hilbert space $H .$ For a closed subspace $N$ of $H ,$ we let $ P_{N} $ denote the orthogonal projection onto $N.$ By the symbol $\tilde{ \oplus} $ \index{$\tilde{ \oplus} $} we denote the direct sum of closed subspaces of $H$ as given in \cite{MT}.Thus, if $M$ is a closed subspace of $H$ and $M_{1}, M_{2}$ are two closed subspaces of $M,$ we write $M=M_{1} \tilde \oplus M_{2}$ if $M_{1} \cap M_{2}=\lbrace 0 \rbrace$ and $M_{1}+ M_{2}=M.$ If, in addition $M_{1}$  and $M_{2}$ are mutually orthogonal, then we write $M=M_{1} \oplus M_{2}.$
 Also, we let $Proj_{0}(\mathcal{A})$ denote the set of all finite projections in $\mathcal{A},$ (i.e. those projections that are not Murray von Neumann equivalent to any of its subprojections).\\
We recall the notion of $\mathcal{A}$-Fredholm operator, originally introduced by Breuer in \cite{BR}, \cite{BR2}. 

\begin{definition} \cite[Definition 3.1]{KL}
	The operator $T \in \mathcal{A}$ is said to be $\mathcal{A}$-Fredholm if the following holds.\\
	$(i)$  $P_{\ker T}  \in Proj_{0}(\mathcal{A}), $ where $P_{\ker T}$ is the projection onto the subspace $\ker T$.\\
	$(ii)$ There is a projection $E \in Proj_{0}(\mathcal{A}) $ such that $Im(I - E) \subseteq Im T . $\\
	The second condition ensures that $P_{\ker T^{*}}$
	also belongs to $Proj_{0}(\mathcal{A}).$\\
	
	The index of an $\mathcal{A}$-Fredholm operator $T$ is defined as 
	$$index T = dim(\ker T ) - dim(\ker T^{*}) \in I(\mathcal{A}) . $$
	
	Here, $I(\mathcal{A})$ is the so called\textit{ index group} of a von Neumann algebra $\mathcal{A}$ defined as the Grothendieck group of the commutative monoid of all representations of the commutant $\mathcal{A}^{\prime}$ generated by representations of the form $\mathcal{A}^{\prime}  \ni S \mapsto ES =\pi_{E}(S) $ for
	some $E \in Proj_{0}(\mathcal{A}) .$ For a subspace $L,$ its dimension $dim L$ is defined as the $class [\pi_{P_{L}}] \in I(\mathcal{A}) $ of the representation $ \pi_{ P_{L}},$ where $P_{L}$ is the projection onto $L.$
\end{definition}

Next we recall the following characterization of $\mathcal{A}$-Fredholm operators.

\begin{lemma}  \label{r12 l11}\cite[Lemma 22]{BJMA}
	Let $\mathcal{A} $ be a properly infinite von Neumann algebra. Then an operator $T \in \mathcal{A} $ is $\mathcal{A}-$Fredholm in the sense of Breuer if and only if there exist projections $ P,Q \in Proj_{0} (\mathcal{A})$ such that $T$ is invertible up to $(P,Q) .$ 
\end{lemma}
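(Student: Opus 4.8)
The plan is to prove the two implications by translating between Breuer's pair of conditions and the existence of an almost inverse supported off a pair of finite corners, as in Definition \ref{d 11}. Throughout I write $P=P_{\ker T}$, and I will use freely that kernel projections, range projections of elements of $\mathcal{A}$, and meets and joins of projections in $\mathcal{A}$ again lie in $\mathcal{A}$, so that every auxiliary projection produced below is automatically an element of $\mathcal{A}$; I also use that an orthogonal sum of two finite projections is finite, and that finiteness is preserved under the subequivalence $\preceq$.

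For the forward implication, assume $T$ is $\mathcal{A}$-Fredholm. Since $TP=0$ we have $T(1-P)=T$, and by $(i)$ the projection $P$ is finite. Fix the finite projection $E$ from $(ii)$, so $Im(1-E)\subseteq Im T$. I would first study the corner map $(1-E)T\colon Im(1-P)\to Im(1-E)$. It is onto $Im(1-E)$, because every $y\in Im(1-E)\subseteq Im T$ has a preimage in $(\ker T)^\perp=Im(1-P)$, on which $(1-E)T$ reproduces $y$. Its kernel inside $Im(1-P)$ is $\mathcal{N}:=Im(1-P)\cap\ker((1-E)T)=\{x\in(\ker T)^\perp: Tx\in Im E\}$; since $T$ is injective on $(\ker T)^\perp$, the map $x\mapsto Tx$ embeds $\mathcal{N}$ into $Im E$, so $P_{\mathcal{N}}\preceq E$ is finite. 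Setting $\tilde P:=P+P_{\mathcal{N}}$ (an orthogonal, hence finite, sum) trims away this kernel, and $(1-E)T$ restricts to a bounded \emph{bijection} $Im(1-\tilde P)\to Im(1-E)$ between two ranges of projections, i.e. between two closed subspaces of $H$. The open mapping theorem then furnishes a bounded two-sided inverse, and I would promote it to an element $b\in\mathcal{A}$ with $b=(1-\tilde P)b(1-E)$ by inverting the positive element $(1-\tilde P)T^*(1-E)T(1-\tilde P)$ inside the reduced von Neumann algebra $(1-\tilde P)\mathcal{A}(1-\tilde P)$ (legitimate, as that element is bounded below there) and composing with $(1-\tilde P)T^*(1-E)$. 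This exhibits $T$ as invertible up to the finite pair $(\tilde P,E)$.

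For the converse, assume $T$ is invertible up to $(P,Q)$ with $P,Q\in Proj_0(\mathcal{A})$, with almost inverse $b=(1-P)b(1-Q)$ satisfying $bT'=1-P$ and $T'b=1-Q$ for $T'=(1-Q)T(1-P)$. Condition $(i)$ follows from the identity $(1-P)x=-\,b(1-Q)TPx$, valid for $x\in\ker T$ (apply $b$ to $T'x$ and use $T(1-P)x=-TPx$): it shows that $x\mapsto Px$ is injective on $\ker T$, whence $P_{\ker T}\preceq P$ is finite. For condition $(ii)$ I would observe that $bT'=1-P$ forces $T$ to be bounded below on $Im(1-P)$, since $\|x\|=\|bT'x\|\le\|b\|\,\|(1-Q)Tx\|\le\|b\|\,\|Tx\|$ there; hence $R_1:=T(Im(1-P))$ is closed, and with $P_{R_1}\in\mathcal{A}$ its projection I set $E:=1-P_{R_1}$, so that $Im(1-E)=R_1\subseteq Im T$. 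Finally $T'b=1-Q$ gives $(1-Q)R_1=Im(1-Q)$, so $R_1+Im Q=H$ and therefore $R_1^\perp\cap Im(1-Q)=\{0\}$; the Kaplansky parallelogram law then yields $E=P_{R_1^\perp}\preceq Q$, so $E$ is finite. Thus $(i)$ and $(ii)$ hold and $T$ is $\mathcal{A}$-Fredholm.

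I expect the main obstacle to be the passage from the analytic partial inverse to a genuine element of $\mathcal{A}$ in the forward direction, together with the bookkeeping that keeps every auxiliary projection both finite and inside $\mathcal{A}$. The conceptual device that makes the argument soft is the enlargement of $P$ to $\tilde P=P+P_{\mathcal{N}}$: it replaces a surjection-with-finite-kernel by an honest isomorphism between two closed projection-ranges, so that the open mapping theorem applies directly and one never has to prove separately that an $\mathcal{A}$-Fredholm operator has closed range.
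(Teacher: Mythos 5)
Your argument is correct, but a literal comparison is not possible here: the paper does not prove this lemma, it only recalls it from \cite[Lemma 22]{BJMA}, and the sole clue it gives about that proof is that it uses the identification (2.1) of $Proj_{0}(\mathcal{A})$ with the projections in the ideal $\mathfrak{m}$ --- i.e.\ the cited proof runs through the abstract Ke\v{c}ki\'{c}--Lazovi\'{c} framework of Fredholm-type elements relative to $\mathcal{F}=\mathfrak{m}$, whereas yours is a direct, self-contained operator-theoretic argument. Both implications of your proof check out: in the forward direction the enlargement $\tilde P=P+P_{\mathcal{N}}$ does turn $(1-E)T$ into a bijection of closed projection-ranges, and your $b=C(1-\tilde P)T^{*}(1-E)$ with $C$ the corner inverse of $(1-\tilde P)T^{*}(1-E)T(1-\tilde P)$ genuinely satisfies $bA=1-\tilde P$ and $Ab=1-E$ for $A=(1-E)T(1-\tilde P)$ (the latter because $Ab$ is a self-adjoint idempotent acting as the identity on $Im\,A=Im(1-E)$); in the converse, the Kaplansky formula applied to $E$ and $1-Q$ with $E\wedge(1-Q)=0$ correctly gives $E\preceq Q$. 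The only places where you assert more than you prove are the two subequivalence claims ``$T$ embeds $\mathcal{N}$ into $Im\,E$, so $P_{\mathcal{N}}\preceq E$'' and ``$P$ is injective on $\ker T$, whence $P_{\ker T}\preceq P$'': each needs the standard von Neumann fact $P_{(\ker S)^{\perp}}\sim P_{\overline{Im\,S}}$ applied to $S=ETP_{\mathcal{N}}$ and $S=PP_{\ker T}$ respectively (the same fact the paper invokes in the proof of Lemma \ref{r12 l13}), so these are genuine but easily filled half-steps rather than gaps. What your route buys is independence from the $\mathfrak{m}$-machinery and an explicit almost inverse; what the cited route buys is that the equivalence drops out of general $C^{*}$-algebra Fredholm theory once (2.1) is established.
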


	Let $\mathcal{F} = \mathfrak{m} $ where $m$ is the norm closure of the set of all $S \in \mathcal{A} $ for which $P_{\overline{ImS}} \in Proj_{0} (\mathcal{A}) .$ Then 
	\begin{equation} \label{f1}
		\lbrace P \in \mathfrak{m} \text{ } \vert \text{ } P \text{   is projection  } \rbrace = Proj_{0}(\mathcal{A}). 
	\end{equation}

This relation has been used in the proof of Lemma \ref{r12 l11}.\\

We recall also the following definition. 

\begin{definition} \label{r12 d15}\cite[Definition 9]{BJMA}
	Let $\mathcal{A} $ be a properly infinite von Neumann algebra and $T \in \mathcal{A} .$ We say that $T$ is upper semi$-\mathcal{A}-$Fredholm if there exist projections $P, Q$ in $\mathcal{A}$ such that $T$ is invertible up to $(P,Q)$ where $ P \in Proj_{0} (\mathcal{A}).$ If in addition $P \preceq Q ,$ we say that $T$ upper semi$-\mathcal{A}-$Weyl. Similarly we say that $T$ is lower semi$- \mathcal{A} -$Fredholm an lower semi$-\mathcal{A} -$Weyl, however in this case we assume that $Q \in  Proj_{0} (\mathcal{A})$ and $Q \preceq P .$
\end{definition}

Thanks to the relation (\ref{f1}) we obtain the following useful characterization of semi$- \mathcal{A} -$Fredholm and semi$- \mathcal{A} -$Weyl operators. 

\begin{corollary} \label{r12 c17}\cite[Corollary 23]{BJMA}
	Let $T \in \mathcal{A}.$ Then $T$ is upper (respectively lower) semi-Fredholm type element in $ \mathcal{A}$ with respect to $\mathfrak{m}$  if and only if $T$ is upper (respectively lower) semi$- \mathcal{A} -$Fredholm. Similarly, $T$ is upper (respectively lower) semi-Weyl type element in $\mathcal{A} $ with respect to $\mathfrak{m}$  if and only if $T$ is upper (respectively lower) semi$- \mathcal{A} -$Weyl. Finally, $T$ is Weyl type element in $ \mathcal{A} $ with respect to $\mathfrak{m}$  if and only if $T$ is $\mathcal{A} -$Weyl.
\end{corollary}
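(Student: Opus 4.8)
The plan is to reduce every equivalence in the statement to the single relation (\ref{f1}), which identifies the projections lying in $\mathfrak{m}$ with the finite projections $Proj_{0}(\mathcal{A}).$ Observe first that ``$T$ is a semi-Fredholm (respectively semi-Weyl, or Weyl) type element with respect to $\mathfrak{m}$'' means nothing more than that the conditions of Definition \ref{d 04} and Definition \ref{r10d 1.3} hold for the particular choice $\mathcal{F} = \mathfrak{m}.$ In each of those definitions the only constraint imposed on the relevant projection, beyond the algebraic invertibility-up-to-a-pair condition, is a membership condition of the form ``$p \in \mathcal{F}$'' or ``$q \in \mathcal{F}$''; the invertibility condition itself (that $(1-q)T(1-p)$ be invertible in the corner, Definition \ref{d 11}) is intrinsic and does not refer to $\mathcal{F}$ at all.

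First I would treat the upper semi-Fredholm case. By Definition \ref{d 04} with $\mathcal{F} = \mathfrak{m},$ the operator $T$ is an upper semi-Fredholm type element with respect to $\mathfrak{m}$ precisely when $T$ is invertible up to a pair $(P,Q)$ with $P$ a projection in $\mathfrak{m}.$ Since $P$ is a projection, (\ref{f1}) gives $P \in \mathfrak{m} \Leftrightarrow P \in Proj_{0}(\mathcal{A}),$ and substituting this membership equivalence turns the condition into exactly the requirement of Definition \ref{r12 d15} for $T$ to be upper semi$-\mathcal{A}-$Fredholm. The lower semi-Fredholm case is identical with the roles of $P$ and $Q$ interchanged, replacing ``$P \in \mathfrak{m}$'' by ``$Q \in \mathfrak{m}$'' and applying (\ref{f1}) to $Q.$

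Next I would handle the semi-Weyl and Weyl cases. These definitions carry, in addition to a membership condition, one of the relations $P \preceq Q,$ $Q \preceq P,$ or $P \sim Q$ (Definition \ref{r10d 1.3}); but these relations are defined intrinsically (Definition \ref{r10d 1.1}) and do not depend on the choice of $\mathcal{F}.$ Hence applying (\ref{f1}) to the finite projection(s) appearing in the membership condition --- $P$ in the upper case, $Q$ in the lower case, and both $P,Q$ in the Weyl case --- converts each ``type element with respect to $\mathfrak{m}$'' condition into the corresponding upper/lower semi$-\mathcal{A}-$Weyl (respectively $\mathcal{A}-$Weyl) condition, and conversely. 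Since the transition is a biconditional at the level of membership while all other data are left untouched, each ``if and only if'' follows at once.

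The proof involves no genuine obstacle: the entire content is carried by (\ref{f1}). The only point requiring care is bookkeeping --- for each of the five equivalences one must verify that the sole difference between the two formulations is the single membership condition to which (\ref{f1}) applies, so that nothing is silently added or dropped when passing from $\mathfrak{m}$-type elements to semi$-\mathcal{A}-$operators.
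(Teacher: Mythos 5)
Your proposal is correct and matches the paper's intended justification: the paper states this corollary without proof (citing \cite{BJMA}) but explicitly attributes it to relation (\ref{f1}), which is precisely the reduction you carry out — for projections, membership in $\mathfrak{m}$ coincides with membership in $Proj_{0}(\mathcal{A})$, and all remaining conditions in Definitions \ref{d 04}, \ref{r10d 1.3} and \ref{r12 d15} are intrinsic and literally identical.
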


\section{On some properties of 2 by 2 operator matrices in von Neumann algebras}

Let $M_{2} (\mathcal{A}) $ denote the von Neumann algebra consisting of 2 by 2  matrices with coefficients in $\mathcal{A} .$ If $\mathcal{A} $ is a properly infinite von Neumannn algebra, then $M_{2} (\mathcal{A}) $ is also a properly infinite von Neumannn algebra.\\
 In this section we shall derive some technical properties of 2 by 2 matrices in a von Neumann algebra which we will use in the rest of the paper.  We start with the following auxiliary technical lemma.

\begin{lemma} \label{r12 l12}
	Let $ \mathcal{A} $ be a properly infinite von Neumann algebra and $P,Q \in Proj_{0} (\mathcal{A}).$ Then 
	$ \left( 
	\begin{pmatrix}
	P & 0 \\
	0 & 0 
	\end{pmatrix} ,
	\begin{pmatrix}
	0 & 0 \\
	0 & Q 
	\end{pmatrix}
	\right)$ 
	$  \in Proj_{0} (M_{2}(\mathcal{A})),$ if and only if $ P,Q \in Proj_{0} (\mathcal{A}).$
\end{lemma}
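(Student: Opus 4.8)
The plan is to relate finiteness of the displayed block projection in $M_{2}(\mathcal{A})$ to finiteness of its diagonal entries in $\mathcal{A}$ through the two diagonal corners of $M_{2}(\mathcal{A})$; I read the displayed object as the orthogonal sum $\begin{pmatrix} P & 0 \\ 0 & 0\end{pmatrix}+\begin{pmatrix} 0 & 0 \\ 0 & Q\end{pmatrix}=\begin{pmatrix} P & 0 \\ 0 & Q\end{pmatrix}$, which is the projection that must lie in $Proj_{0}(M_{2}(\mathcal{A}))$. Set $e_{1}=\begin{pmatrix} 1 & 0 \\ 0 & 0\end{pmatrix}$, $e_{2}=\begin{pmatrix} 0 & 0 \\ 0 & 1\end{pmatrix}$, and write $A=\begin{pmatrix} P & 0 \\ 0 & 0\end{pmatrix}$, $B=\begin{pmatrix} 0 & 0 \\ 0 & Q\end{pmatrix}$, so that $A\le e_{1}$, $B\le e_{2}$, $AB=0$ and $A+B=\begin{pmatrix} P & 0 \\ 0 & Q\end{pmatrix}$. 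The maps $a\mapsto\begin{pmatrix} a & 0 \\ 0 & 0\end{pmatrix}$ and $a\mapsto\begin{pmatrix} 0 & 0 \\ 0 & a\end{pmatrix}$ are normal $*$-isomorphisms of $\mathcal{A}$ onto the corners $e_{1}M_{2}(\mathcal{A})e_{1}$ and $e_{2}M_{2}(\mathcal{A})e_{2}$, carrying $P$ to $A$ and $Q$ to $B$.

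The first thing I would record is that finiteness of a projection is absolute with respect to corners: if $E$ is a projection in a von Neumann algebra $M$ and $v\in M$ implements an equivalence $E\sim F$ with $F\le E$, then $v=EvE\in EME$, so $E$ is finite in $M$ if and only if $E$ is finite as the unit of the reduced algebra $EME$. Combining this with the two corner isomorphisms yields the key correspondence: $A$ is finite in $M_{2}(\mathcal{A})$ if and only if $P\in Proj_{0}(\mathcal{A})$, and likewise $B$ is finite in $M_{2}(\mathcal{A})$ if and only if $Q\in Proj_{0}(\mathcal{A})$.

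Granting this correspondence, the two implications reduce to standard comparison theory. For the direction assuming $A+B\in Proj_{0}(M_{2}(\mathcal{A}))$, I would use that a subprojection of a finite projection is finite: since $A\le A+B$ and $B\le A+B$, both $A$ and $B$ are finite, whence $P,Q\in Proj_{0}(\mathcal{A})$ by the correspondence. For the converse, assuming $P,Q\in Proj_{0}(\mathcal{A})$, the correspondence makes $A$ and $B$ finite projections in $M_{2}(\mathcal{A})$; since $A\perp B$, invoking the fact that the orthogonal sum of two finite projections is finite gives $A+B\in Proj_{0}(M_{2}(\mathcal{A}))$.

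The main obstacle is this last input, that the orthogonal sum of two finite projections is again finite. Because $\mathcal{A}$, and hence $M_{2}(\mathcal{A})$, is only assumed properly infinite, there is no tracial state available to count dimensions, so the claim cannot be settled by a numerical inequality and must rest on the general comparison theory for projections in von Neumann algebras: Kaplansky's parallelogram law $A\vee B - B\sim A-A\wedge B\le A$ exhibits $A\vee B-B$ as subequivalent to the finite projection $A$, and the standard structural theorem that the join of two finite projections is finite (together with the invariance of finiteness under Murray--von Neumann equivalence) yields that $A\vee B=A+B$ is finite. I would quote this join theorem as a known fact and let the corner bookkeeping and the subprojection direction, both routine, do the rest.
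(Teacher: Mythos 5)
Your proof is correct, and its core --- the claim that $\begin{pmatrix} P & 0\\ 0 & 0\end{pmatrix}$ is finite in $M_{2}(\mathcal{A})$ precisely when $P$ is finite in $\mathcal{A}$ --- is essentially the paper's own argument, only packaged more abstractly: you invoke the general fact that an equivalence $E\sim F$ with $F\le E$ is implemented by a partial isometry $v=EvE$ living in the corner $EME$, whereas the paper carries out exactly this compression by hand, writing $V=(V_{ij})$ and checking that $PV_{1}P^{\prime}$ implements $P\sim P^{\prime}$ inside $\mathcal{A}$. Where you diverge is in the reading of the statement. The paper's pair notation $(\,\cdot\,,\,\cdot\,)\in Proj_{0}(M_{2}(\mathcal{A}))$ means, as its proof and the later applications make clear, that each of the two matrices \emph{separately} is a finite projection; under that reading the lemma follows from the corner correspondence alone. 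You instead read the pair as the single orthogonal sum $\begin{pmatrix} P & 0\\ 0 & Q\end{pmatrix}$, which is an equivalent but strictly stronger formulation in the ``if'' direction, and this forces you to import the theorem that an orthogonal sum of two finite projections is finite. That theorem is true and standard, but it is genuinely nontrivial in a properly infinite algebra with no trace, and your appeal to it is slightly circular as written: the parallelogram-law step you sketch already presupposes the join theorem you are trying to apply. Since you quote it explicitly as a known fact this is not a gap, but it is a heavier tool than the lemma needs under the paper's intended reading, and you should either prove the orthogonal-sum theorem or restate the lemma componentwise so that the corner correspondence suffices.
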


\begin{proof}
	Let $P \in Proj_{0} (\mathcal{A})$ and assume that there exists a subprojection $ \tilde{P}^{\prime}$ of 
	$
	\begin{pmatrix}
	P & 0 \\
	0 & 0 
	\end{pmatrix}
	$ 
	such that $\tilde{P}^{\prime} \sim$ 
	$
	\begin{pmatrix}
	P & 0 \\
	0 & 0 
	\end{pmatrix}
	.$ Since $\tilde{P} $ is a subprojection of 
	$
	\begin{pmatrix}
	P & 0 \\
	0 & 0 
	\end{pmatrix}
	,$ then $\tilde{P}^{\prime} =$ 
	$
	\begin{pmatrix}
	P^{\prime} & 0 \\
	0 & 0 
	\end{pmatrix}
	$ for some $P^{\prime} \leq P.$ Let $ V \in M_{2} (\mathcal{A})$ such that $ V V^{*}=$
	$
	\begin{pmatrix}
	P & 0 \\
	0 & 0 
	\end{pmatrix}
	$ and $V^{*}V=$
	$
	\begin{pmatrix}
	P^{\prime} & 0 \\
	0 & 0 
	\end{pmatrix}
	.$ 
	Then, if we put $\tilde{P}=$
	$
	\begin{pmatrix}
	P & 0 \\
	0 & 0 
	\end{pmatrix}
	,$ we get that $\tilde{P} V \tilde{P}^{\prime} V^{*} \tilde{P}  =  \tilde{P} V V^{*} V V^{*} \tilde{P}  =  \tilde{P} $ and 
	$\tilde{P}^{\prime} V^{*} \tilde{P} V \tilde{P}^{\prime} = \tilde{P}^{\prime} V^{*} V V^{*} V \tilde{P}^{\prime} = \tilde{P}^{\prime} .$ Moreover, if we write $V$ as 
	$
	\begin{pmatrix}
	V_{1} & V_{2} \\
	V_{3} & V_{4} 
	\end{pmatrix}
	$ 
	for some $V_{1},V_{2},V_{3},V_{4} \in \mathcal{A} ,$ we get that $\tilde{P} V \tilde{P}^{\prime} =$
	$
	\begin{pmatrix}
	PV_{1}P^{\prime} & 0 \\
	0 & 0 
	\end{pmatrix}
	.$  Hence $P= (P V_{1} P^{\prime} ) (P V_{1} P^{\prime} )^{*} $ and $P^{\prime}= (P V_{1} P^{\prime} )^{*} (P V_{1} P^{\prime} ) ,$ so $P \sim P^{\prime} $ which is a contradiction. Thus, we must have that 
	$
	\begin{pmatrix}
	P & 0 \\
	0 & 0 
	\end{pmatrix}
	$ 
	$\in Proj_{0} (M_{2}(\mathcal{A})).$ Conversely, it is obvious that if $P \in Proj (\mathcal{A})$ and $ P \sim P^{\prime}$ for some $P^{\prime} \leq P ,$ then 
	$
	\begin{pmatrix}
	P & 0 \\
	0 & 0 
	\end{pmatrix}
	$
	$ \sim $
	$
	\begin{pmatrix}
	P^{\prime} & 0 \\
	0 & 0 
	\end{pmatrix}
	.$ 
	Hence, if 
	$
	\begin{pmatrix}
	P & 0 \\
	0 & 0 
	\end{pmatrix}
	$ 
	$ \in Proj_{0}(M_{2}(\mathcal{A})) ,$ then $P \in Proj_{0} (\mathcal{A}).$\\
	Similarly we treat the case with 
	$
	\begin{pmatrix}
	0 & 0 \\
	0 & Q 
	\end{pmatrix}
	.$ 
\end{proof}	
Next we recall also the following properties of finite operators in von Neumann algebras. 

\begin{lemma} \label{r12 l13}\cite{BR},\cite{BR2}
	Let $\mathcal{A} $ be a properly infinite von Neumannn algebra and $T \in \mathcal{A} .$ Then $ P_{\overline{Im T}} \in Proj_{0} (\mathcal{A}) $ if and only if $P_{\overline{Im T^{*}}} \in Proj_{0}  (\mathcal{A})$ and in this case $ P_{\overline{Im S_{1} T S_{2}}} \in Proj_{0} (\mathcal{A})$ for all $ S_{1}, S_{2} \in  \mathcal{A}.$
\end{lemma}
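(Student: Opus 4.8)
The plan is to reduce both assertions to the Murray--von Neumann comparison theory for range projections, with the polar decomposition in $\mathcal{A}$ doing all the work. The only structural facts about $\mathcal{A}$ I shall invoke are standard: for any $S \in \mathcal{A}$ the range projection $P_{\overline{Im S}}$ again lies in $\mathcal{A}$, and finiteness of a projection is preserved both under Murray--von Neumann equivalence $\sim$ and under passing to subprojections (so that, together, $e \preceq f$ with $f \in Proj_{0}(\mathcal{A})$ forces $e \in Proj_{0}(\mathcal{A})$, where $\preceq$ is as in Definition \ref{r10d 1.1}).

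First I would settle the equivalence of the two conditions. Writing the polar decomposition $T = V|T|$ with $V \in \mathcal{A}$ a partial isometry, one has $V^{*}V = P_{\overline{Im T^{*}}}$ and $VV^{*} = P_{\overline{Im T}}$, so that $P_{\overline{Im T}} \sim P_{\overline{Im T^{*}}}$. Since finiteness is an invariant of $\sim$, the projection $P_{\overline{Im T}}$ is finite if and only if $P_{\overline{Im T^{*}}}$ is, which is exactly the first claim.

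For the second claim, assume $P := P_{\overline{Im T}} \in Proj_{0}(\mathcal{A})$. The key observation is that the right factor only shrinks the range while the left factor can be routed through $P$: since $T = PT$, one has $Im(S_{1}TS_{2}) \subseteq Im(S_{1}T) \subseteq Im(S_{1}P)$, and passing to closures and range projections gives $P_{\overline{Im(S_{1}TS_{2})}} \le P_{\overline{Im(S_{1}P)}}$. It therefore suffices to show $P_{\overline{Im(S_{1}P)}}$ is finite. Applying the polar decomposition to $X = S_{1}P$ yields $P_{\overline{Im X}} \sim P_{\overline{Im X^{*}}}$, and because $Im X^{*} = Im(PS_{1}^{*}) \subseteq Im P$ we get $P_{\overline{Im X^{*}}} \le P$; hence $P_{\overline{Im(S_{1}P)}} \preceq P \in Proj_{0}(\mathcal{A})$, so $P_{\overline{Im(S_{1}P)}}$ is finite. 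Its subprojection $P_{\overline{Im(S_{1}TS_{2})}}$ is then finite as well.

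The routine parts are the polar-decomposition identities and the range inclusions, and no single step is a genuine obstacle. The only point requiring care is bookkeeping about which factor does what: it is the \emph{left} factor $S_{1}$, acting through $P = P_{\overline{Im T}}$, that forces subequivalence to a finite projection, whereas the \emph{right} factor $S_{2}$ merely passes to a smaller range; keeping these two roles separate is what makes the argument go through for the full two-sided product $S_{1}TS_{2}$ rather than only for $S_{1}T$.
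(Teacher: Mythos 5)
Your proposal is correct and rests on exactly the same two facts as the paper's proof: the Murray--von Neumann equivalence $P_{\overline{Im X}}\sim P_{\overline{Im X^{*}}}$ coming from the polar decomposition, together with monotonicity of range projections and preservation of finiteness under $\sim$ and under subprojections. The only cosmetic difference is that the paper dispatches the left factor $S_{1}$ by passing to adjoints twice (applying the equivalence to $TS_{2}$ and then to $S_{2}^{*}T^{*}S_{1}^{*}$), whereas you handle it directly via $P_{\overline{Im(S_{1}P)}}\preceq P$; the underlying argument is the same.
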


\begin{proof}
	If $T \in \mathcal{A} ,$ then $ P_{\overline{Im T}} \sim  P_{{\ker T^{\perp}}} .$ Since $P_{\ker T^{\perp}} = P_{\overline{Im T^{*}}}  ,$ the first statement follows. Now, if $S_{2} \in \mathcal{A} ,$ then $P_{\overline{Im T S_{2}}} \leq P_{\overline{Im T }} ,$ hence we must have $P_{\overline{Im T S_{2}}} \in Proj_{0} (\mathcal{A}) $ if $P_{\overline{Im T}} \in Proj_{0} (\mathcal{A}) .$ By the first statement we also get that $ P_{\overline{ImS_{2}^{*} T^{*}}} \in Proj_{0} (\mathcal{A}) .$ Hence, if in addition $S_{1} \in \mathcal{A} ,$ by repeating the same argument we get that $P_{\overline{Im S_{2}^{*} T^{*} S_{1}^{*}}} \in Proj_{0} (\mathcal{A}) ,$ so $P_{\overline{Im S_{1} T S_{2}}} \in Proj_{0} (\mathcal{A}) .$
\end{proof}
From Lemma \ref{r12 l13} we deduce the following useful corollary.  

\begin{corollary} \label{r12 c14}
	Let $T=$ 
	$
	\begin{pmatrix}
	T_{1} & T_{2} \\
	T_{3} & T_{4} 
	\end{pmatrix}
	$ be an element of $M_{2} (\mathcal{A}) $ where $ \mathcal{A} $ is a properly infinite von Neumann algebra. 
	If $ P_{\overline{Im T}} \in  Proj_{0} (M_{2}(\mathcal{A})) ,$ then 
	$P_{\overline{Im T_{1}}}, P_{\overline{Im T_{2}}} \in Proj_{0} (\mathcal{A}) .$
\end{corollary}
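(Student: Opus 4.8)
The plan is to extract the relevant projections from the ambient matrix algebra and push them down to $\mathcal{A}$ using Lemma \ref{r12 l13}. By hypothesis $P_{\overline{\mathrm{Im}\,T}} \in Proj_{0}(M_{2}(\mathcal{A}))$, which by Lemma \ref{r12 l13} (applied in $M_{2}(\mathcal{A})$) is equivalent to $P_{\overline{\mathrm{Im}\,T^{*}}} \in Proj_{0}(M_{2}(\mathcal{A}))$, and moreover guarantees that $P_{\overline{\mathrm{Im}\,S_{1}TS_{2}}} \in Proj_{0}(M_{2}(\mathcal{A}))$ for all $S_{1},S_{2} \in M_{2}(\mathcal{A})$. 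The idea is to choose the compressing matrices $S_{1},S_{2}$ so that the product $S_{1}TS_{2}$ isolates $T_{1}$ (resp.\ $T_{2}$) in one corner, and then use Lemma \ref{r12 l12} to transfer finiteness of the resulting corner projection in $M_{2}(\mathcal{A})$ back to finiteness of $P_{\overline{\mathrm{Im}\,T_{j}}}$ in $\mathcal{A}$.

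Concretely, to handle $T_{1}$, I would set
$$
E_{11} = \begin{pmatrix} I & 0 \\ 0 & 0 \end{pmatrix},
$$
and consider $E_{11}TE_{11}$. Writing out the product,
$$
E_{11}TE_{11} = \begin{pmatrix} I & 0 \\ 0 & 0 \end{pmatrix}\begin{pmatrix} T_{1} & T_{2} \\ T_{3} & T_{4} \end{pmatrix}\begin{pmatrix} I & 0 \\ 0 & 0 \end{pmatrix} = \begin{pmatrix} T_{1} & 0 \\ 0 & 0 \end{pmatrix}.
$$
By Lemma \ref{r12 l13} the projection $P_{\overline{\mathrm{Im}\,(E_{11}TE_{11})}}$ lies in $Proj_{0}(M_{2}(\mathcal{A}))$. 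Now $\overline{\mathrm{Im}\,(E_{11}TE_{11})} = \overline{\mathrm{Im}\,T_{1}} \oplus \{0\}$, so this projection equals $\left(\begin{smallmatrix} P_{\overline{\mathrm{Im}\,T_{1}}} & 0 \\ 0 & 0 \end{smallmatrix}\right)$, and Lemma \ref{r12 l12} then forces $P_{\overline{\mathrm{Im}\,T_{1}}} \in Proj_{0}(\mathcal{A})$.

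For $T_{2}$, the obstacle is that $T_{2}$ sits in the off-diagonal $(1,2)$ position, so compressing by $E_{11}$ on both sides kills it. Instead I would move the second column into the first by setting
$$
S = \begin{pmatrix} 0 & 0 \\ I & 0 \end{pmatrix}, \qquad E_{11}TS = \begin{pmatrix} I & 0 \\ 0 & 0 \end{pmatrix}\begin{pmatrix} T_{1} & T_{2} \\ T_{3} & T_{4} \end{pmatrix}\begin{pmatrix} 0 & 0 \\ I & 0 \end{pmatrix} = \begin{pmatrix} T_{2} & 0 \\ 0 & 0 \end{pmatrix}.
$$
Again Lemma \ref{r12 l13} gives $P_{\overline{\mathrm{Im}\,(E_{11}TS)}} \in Proj_{0}(M_{2}(\mathcal{A}))$, this projection equals $\left(\begin{smallmatrix} P_{\overline{\mathrm{Im}\,T_{2}}} & 0 \\ 0 & 0 \end{smallmatrix}\right)$, and Lemma \ref{r12 l12} yields $P_{\overline{\mathrm{Im}\,T_{2}}} \in Proj_{0}(\mathcal{A})$.

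The only genuinely delicate point is the identification of $\overline{\mathrm{Im}\,(E_{11}TE_{11})}$ and $\overline{\mathrm{Im}\,(E_{11}TS)}$ as $\overline{\mathrm{Im}\,T_{1}}\oplus\{0\}$ and $\overline{\mathrm{Im}\,T_{2}}\oplus\{0\}$ respectively, together with the corresponding identity for the projections. Since each of these operators has image contained in the first coordinate summand $H\oplus\{0\}$, the range closure splits as a direct sum and the orthogonal projection onto it is precisely the matrix whose $(1,1)$ entry is the projection onto the closure of the range of the corresponding entry; everything else is a routine verification. Thus both conclusions follow, and the proof is complete.
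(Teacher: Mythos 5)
Your proof is correct and follows essentially the same route as the paper: compress $T$ by matrix units, apply Lemma \ref{r12 l13} in $M_{2}(\mathcal{A})$ to get finiteness of the range projection of the compressed operator, and then transfer it to $\mathcal{A}$ via Lemma \ref{r12 l12}. If anything, your argument is slightly more complete, since the paper only writes out the diagonal case ($T_{1}$, and then ``similarly $T_{4}$'') while you explicitly supply the column-shift matrix needed to isolate the off-diagonal entry $T_{2}$ that actually appears in the statement.
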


\begin{proof}
	By Lemma  \ref{r12 l13}, if $ \tilde{T}_{1}=$ 
	$
	\begin{pmatrix}
	T_{1} & 0 \\
	0 & 0 
	\end{pmatrix}
	,$ then $P_{\overline{Im \tilde{T}_{1}}}  \in Proj_{0} (M_{2}(\mathcal{A}))$ if $P_{\overline{Im T}} \in Proj_{0} (M_{2}(\mathcal{A})) .$ 
	This is because 
	$ \tilde{T}_{1}=
	\begin{pmatrix}
	1 & 0 \\
	0 & 0 
	\end{pmatrix}
	T
	\begin{pmatrix}
	1 & 0 \\
	0 & 0 
	\end{pmatrix}
	.$
	However, $P_{\overline{Im \tilde{T}_{1}}} =$
	$
	\begin{pmatrix}
	P_{\overline{Im T_{1}}} & 0 \\
	0 & 0 
	\end{pmatrix}
	,$ hence by Lemma \ref{r12 l12}, $ P_{\overline{Im T_{1}}} \in Proj_{0} (\mathcal{A}).$ Similarly we can prove that $P_{\overline{Im T_{4}}} \in Proj_{0} (\mathcal{A}) .$ 
\end{proof}

For an operator $ T^{\prime} \in M_{2}(\mathcal{A})$ we shall simply say that $T^{\prime} $ is $\mathcal{A} -$Fredholm if $T^{\prime} $ is $M_{2} (\mathcal{A}) -$Fredholm. We have the following corollary.
\begin{corollary} \label{r12 c18}
	Let $\mathcal{A} $ be a properly infinite von Neumann algebra and $ T,S \in \mathcal{A}.$ If $T$ is $ \mathcal{A}-$Fredholm, then 
	$
	\begin{pmatrix}
	T & 0 \\
	0 & 1 
	\end{pmatrix} 
	$
	is $ \mathcal{A} -$Fredholm. Similarly, if $S$ is $\mathcal{A} -$Fredholm, then 
	$ 
	\begin{pmatrix}
	1 & 0 \\
	0 & S 
	\end{pmatrix} 
	$ is $ \mathcal{A} -$Fredholm.
\end{corollary}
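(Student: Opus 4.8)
The plan is to reduce everything to the ``invertible up to a pair'' characterization of $\mathcal{A}$-Fredholmness in Lemma \ref{r12 l11}, applied both in $\mathcal{A}$ and in $M_{2}(\mathcal{A})$, so that the block-diagonal structure can be handled by a direct algebraic computation rather than by analyzing kernels and images of the $2\times 2$ operator. First I would invoke Lemma \ref{r12 l11}: since $T$ is $\mathcal{A}$-Fredholm there are projections $P,Q \in Proj_{0}(\mathcal{A})$ and an element $b \in \mathcal{A}$ with $b=(1-P)b(1-Q)$ witnessing the invertibility of $T$ up to $(P,Q)$, i.e. $(1-Q)T(1-P)\,b = 1-Q$ and $b\,(1-Q)T(1-P)=1-P$.

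Next I would set $\tilde P = \begin{pmatrix} P & 0 \\ 0 & 0 \end{pmatrix}$ and $\tilde Q = \begin{pmatrix} Q & 0 \\ 0 & 0 \end{pmatrix}$ in $M_{2}(\mathcal{A})$; by Lemma \ref{r12 l12} both lie in $Proj_{0}(M_{2}(\mathcal{A}))$ precisely because $P,Q \in Proj_{0}(\mathcal{A})$. The claim is that $\begin{pmatrix} T & 0 \\ 0 & 1 \end{pmatrix}$ is invertible up to $(\tilde P,\tilde Q)$ with almost inverse $B=\begin{pmatrix} b & 0 \\ 0 & 1 \end{pmatrix}$. This is a one-line block computation: $(1_{M_{2}}-\tilde Q)\begin{pmatrix} T & 0 \\ 0 & 1 \end{pmatrix}(1_{M_{2}}-\tilde P) = \begin{pmatrix} (1-Q)T(1-P) & 0 \\ 0 & 1 \end{pmatrix}$, and multiplying this on the right (resp. left) by $B$ returns $\begin{pmatrix} 1-Q & 0 \\ 0 & 1 \end{pmatrix}=1_{M_{2}}-\tilde Q$ (resp. $1_{M_{2}}-\tilde P$), using the two identities for $b$; one also checks $B=(1_{M_{2}}-\tilde P)B(1_{M_{2}}-\tilde Q)$ from $b=(1-P)b(1-Q)$. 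Applying Lemma \ref{r12 l11} in $M_{2}(\mathcal{A})$ then yields that $\begin{pmatrix} T & 0 \\ 0 & 1 \end{pmatrix}$ is $M_{2}(\mathcal{A})$-Fredholm, i.e. $\mathcal{A}$-Fredholm.

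For the second statement I would argue symmetrically, taking $P,Q\in Proj_{0}(\mathcal{A})$ for $S$ and using the blocks $\begin{pmatrix} 0 & 0 \\ 0 & P \end{pmatrix}$, $\begin{pmatrix} 0 & 0 \\ 0 & Q \end{pmatrix}$ (finite by the second half of Lemma \ref{r12 l12}) together with the almost inverse $B=\begin{pmatrix} 1 & 0 \\ 0 & b \end{pmatrix}$; the computation is identical.

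I do not expect a serious obstacle here: the entire content lies in choosing the correct ``invertible up to a pair'' data and in the finiteness of the corner projections, which Lemma \ref{r12 l12} supplies. The only point needing a moment's care is the support condition $B=(1_{M_{2}}-\tilde P)B(1_{M_{2}}-\tilde Q)$ required by Definition \ref{d 11}, but this is immediate from the corresponding property of $b$. An equally short alternative would work straight from the Breuer definition: $\ker\begin{pmatrix} T & 0 \\ 0 & 1 \end{pmatrix}=\ker T\oplus\{0\}$ has finite projection by Lemma \ref{r12 l12}, and if $Im(I-E)\subseteq Im\,T$ with $E$ finite then $Im\begin{pmatrix} I-E & 0 \\ 0 & 1 \end{pmatrix}\subseteq Im\begin{pmatrix} T & 0 \\ 0 & 1 \end{pmatrix}$ with $\begin{pmatrix} E & 0 \\ 0 & 0 \end{pmatrix}$ finite, so both conditions of the definition hold.
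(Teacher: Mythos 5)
Your proposal is correct and follows essentially the same route as the paper: the paper likewise takes the pair $(P,Q)$ witnessing invertibility of $T$, observes that $\begin{pmatrix} T & 0 \\ 0 & 1 \end{pmatrix}$ is invertible up to the corresponding block pair, and concludes via Lemma \ref{r12 l12} and the characterization in Lemma \ref{r12 l11}. Your version merely writes out explicitly the block almost inverse $B$ that the paper leaves implicit.
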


\begin{proof}
	If $T$ is invertible up to $(P, Q)$ for some projections $P$ and $Q,$ then 
	$
	\begin{pmatrix}
	T & 0 \\
	0 & 1 
	\end{pmatrix} 
	$ is invertible up to 
	$ \left(
	\begin{pmatrix}
	P & 0 \\
	0 & 0 
	\end{pmatrix} 
	,
	\begin{pmatrix}
	Q & 0 \\
	0 & 0 
	\end{pmatrix} 
	\right) .$
	Similarly, if $S$ is invertible up to $(P^{\prime}, Q^{\prime}) ,$ for some projections $P^{\prime}$ and $Q^{\prime},$ then 
	$
	\begin{pmatrix}
	1 & 0 \\
	0 & S 
	\end{pmatrix} 
	$ 
	is invertible up to 
	$\left( 
	\begin{pmatrix}
	0 & 0 \\
	0 & P^{\prime} 
	\end{pmatrix} 
	,
	\begin{pmatrix}
	0 & 0 \\
	0 & Q^{\prime} 
	\end{pmatrix} 
	\right).$
	Hence, by applying Lemma \ref{r12 l12}, we deduce the statements in the corollary. 
\end{proof}

We end this section with a few more technical results which we will need in the rest of the paper. 

\begin{lemma} \label{r12 l14}
Let $N , M$ be a closed subspaces of $H$ such that $P_{N} , P_{M} \in \mathcal{A} $ and $D \in \mathcal{A}$ such that $D$ has the matrix 	$ 
\begin{pmatrix}
	D_{1} & D_{2} \\
	D_{1} & D_{4}
	\end{pmatrix} $
with respect to the decomposition $ N \oplus N^{ \perp} \rightarrow M \oplus M^{ \perp} ,$ where $ D_{1} $ is an isomorphism. If $S$ is the operator with matrix $ 
\begin{pmatrix}
	D_{1} ^{ -1} & 0 \\
	0 & 0
\end{pmatrix} $
with respect to the  decomposition $ M \oplus M^{ \perp} \rightarrow N \oplus N^{ \perp} , $then $ S \in \mathcal{A} .$

\end{lemma}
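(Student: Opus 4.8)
The plan is to invoke von Neumann's bicommutant theorem: since $\mathcal{A}$ is a von Neumann algebra, $\mathcal{A} = \mathcal{A}''$, so it suffices to show that $S$ is a bounded operator on $H$ that commutes with every $U \in \mathcal{A}'$. First I would record that $S$ really is bounded: because $D_1 \colon N \to M$ is an isomorphism, the open mapping theorem gives that $D_1^{-1} \colon M \to N$ is bounded, and $S = D_1^{-1} P_M$, where $D_1^{-1}$ is regarded as a map into $N \subseteq H$, satisfies $\|S\| \leq \|D_1^{-1}\|$ and annihilates $M^{\perp}$ as required by its matrix form.

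Next, fix $U \in \mathcal{A}'$. Since $P_N, P_M \in \mathcal{A}$, we have $U P_N = P_N U$ and $U P_M = P_M U$, whence $U$ leaves $N$ and $M$ invariant; I write $U|_N$ and $U|_M$ for the restrictions. The key step is an intertwining relation for $D_1$. Because $D \in \mathcal{A}$ commutes with $U$, for $x \in N$ one computes
$$ (U|_M) D_1 x = U P_M D x = P_M U D x = P_M D U x = D_1 (U|_N) x, $$
where the last equality uses $U x \in N$; that is, $(U|_M) D_1 = D_1 (U|_N)$. Since $D_1$ is invertible, this rearranges to $D_1^{-1} (U|_M) = (U|_N) D_1^{-1}$.

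Finally, I would combine these facts. For $x \in H$ the vector $D_1^{-1} P_M x$ lies in $N$, so
$$ U S x = U D_1^{-1} P_M x = (U|_N)\bigl(D_1^{-1} P_M x\bigr) = D_1^{-1} (U|_M)(P_M x) = D_1^{-1} P_M U x = S U x, $$
where the middle equality is the inverted intertwining relation and the next uses $U P_M = P_M U$. Thus $S$ commutes with every element of $\mathcal{A}'$, and therefore $S \in \mathcal{A}'' = \mathcal{A}$.

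The only genuinely substantive point is the intertwining identity $(U|_M) D_1 = D_1 (U|_N)$ together with its inversion through the isomorphism $D_1$; everything else is bookkeeping about invariant subspaces. I do not anticipate a real obstacle, but one must stay careful that a general $U \in \mathcal{A}'$ need not be self-adjoint or a projection, so I rely only on the one-sided invariances $U(N) \subseteq N$ and $U(M) \subseteq M$ coming from commutation with $P_N$ and $P_M$, rather than on any two-sided reducing property.
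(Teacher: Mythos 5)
Your proof is correct, but it takes a genuinely different route from the paper's. You work \emph{externally}, via the commutant: you verify that $S=D_1^{-1}P_M$ is bounded (open mapping theorem), derive the intertwining identity $(U|_M)D_1=D_1(U|_N)$ for every $U\in\mathcal{A}'$ from $UD=DU$ together with the invariances $U(N)\subseteq N$, $U(M)\subseteq M$, invert it through the isomorphism $D_1$, and conclude $S\in\mathcal{A}''=\mathcal{A}$ by the bicommutant theorem. The paper instead works \emph{internally}: it observes that $P_MDP_N=\bigl(\begin{smallmatrix}D_1&0\\0&0\end{smallmatrix}\bigr)\in\mathcal{A}$, takes the polar decomposition $D_1=U|D_1|$, uses that the partial isometry of the polar decomposition of an element of a von Neumann algebra again lies in the algebra, inverts $|D_1|$ after padding it to the invertible positive element $|D_1|\,\oplus\,1$ (whose inverse lies in $\mathcal{A}$ by functional calculus), and assembles $D_1^{-1}=|D_1|^{-1}U^*$. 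Both arguments use that $\mathcal{A}$ is a von Neumann algebra rather than a mere $C^*$-algebra --- yours through the double commutant, the paper's through closure under polar decomposition --- so neither is more general; your version is shorter and avoids the polar-decomposition bookkeeping (including the mild notational care needed in the paper about whether $|D_1|$ lives on $N$ or $M$), while the paper's has the advantage of producing an explicit factorization of $S$ inside $\mathcal{A}$, which is in the spirit of the algebra-internal computations used elsewhere in the paper.
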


\begin{proof}
	We have that $ 
	\begin{pmatrix}
		D_{1}  & 0 \\
		0 & 0
	\end{pmatrix} 
= P_{M} D P_{N}\in \mathcal{A} . $  Let $U$ be the partial isometry from the polar decomposition of $D_{1} . $ The operator $ \tilde U $ given by the operator matrix $ \begin{pmatrix}
	U & 0 \\
	0 & 0
\end{pmatrix} $  with respect to the decomposition $ N \oplus N^{ \perp} \rightarrow M \oplus M^{ \perp} $ is obviously the partial isometry from the polar decomposition of the operator $
\begin{pmatrix}
	D_{1}  & 0 \\
	0 & 0
\end{pmatrix} ,$ hence $ \tilde U \in \mathcal{A} .$ Since $ D_{1} $ is an isomorphism, then $ \vert D_{1} \vert $ is invertible in $B(M) .$ Now, $ \vert 
\begin{pmatrix}
	D_{1} & 0 \\
	0 & 0
\end{pmatrix} \vert = 
\begin{pmatrix}
 \vert	D_{1} \vert  & 0 \\
	0 & 0
\end{pmatrix} ,$ so $
\begin{pmatrix}
	\vert	D_{1} \vert  & 0 \\
	0 & 0
\end{pmatrix} \in \mathcal{A} .$ Hence $
\begin{pmatrix}
	\vert	D_{1} \vert  & 0 \\
	0 & 0
\end{pmatrix} + P_{M^{ \perp }} = 
\begin{pmatrix}
	\vert	D_{1} \vert  & 0 \\
	0 & 1
\end{pmatrix} \in \mathcal{A} .$ The operator $
\begin{pmatrix}
	\vert	D_{1} \vert  & 0 \\
	0 & 1
\end{pmatrix} $ is positive, invertible operator in $ \mathcal{A} $ with its inverse $ \begin{pmatrix}
\vert	D_{1} \vert ^{ -1 } & 0 \\
0 & 1
\end{pmatrix} .$ This follows from the functional calculus. Hence $ \begin{pmatrix}
\vert	D_{1} \vert ^{ -1 } & 0 \\
0 & 1
\end{pmatrix} \in \mathcal{A} $ since $ \begin{pmatrix}
\vert	D_{1} \vert ^{ -1 } & 0 \\
0 & 1
\end{pmatrix} = P_{M} \begin{pmatrix}
\vert	D_{1} \vert  & 0 \\
0 & 1
\end{pmatrix} ^{ -1 } P_{M} .$ Next, notice that $ D_{1} ^{ -1 } = \vert D_{1} \vert ^{ -1 } U ^{*} .$ Hence $  \begin{pmatrix}
	D_{1}  ^{ -1 } & 0 \\
0 & 1
\end{pmatrix} =  \begin{pmatrix}
\vert	D_{1} \vert ^{ -1 } & 0 \\
0 & 1
\end{pmatrix} \tilde U ^{*} \in \mathcal{A} .$

\end{proof}
\begin{corollary} \label{isom}
	Let $N , M$ be a closed subspaces of $H$ such that $P_{N} , P_{M} \in \mathcal{A} $ and $D \in \mathcal{A}$ such that $D$ has the matrix 	$ 
	\begin{pmatrix}
		D_{1} & D_{2} \\
		D_{1} & D_{4}
	\end{pmatrix} $
	with respect to the decomposition $ N \oplus N^{ \perp} \rightarrow M \oplus M^{ \perp} .$ Then $D$ is invertible up to $ ( I -P_{N} , I - P_{M}) $ in $ \mathcal{A} $ if and only if $ D_{1} $ is an isomorphism. Moreover, $D$ is left invertible up to $ ( I -P_{N} ,  I- , P_{M}) $ in $\mathcal{A} $ if and only if $ D_{1} $ is bounded below, whereas $D$ is right invertible up to $ ( I -P_{N} , I - P_{M}) $ in $\mathcal{A} $ if and only if $ D_{1} $ is surjective. 
\end{corollary}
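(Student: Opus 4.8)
The plan is to translate the condition ``invertible up to $(I-P_N,I-P_M)$'' into a statement about the single corner $D_1$, and then to read off the three equivalences from Lemma \ref{r12 l14} together with a one-sided variant of it. Writing $p=I-P_N$ and $q=I-P_M$, so that $1-p=P_N$ and $1-q=P_M$, the relevant compression in Definition \ref{d 11} is $D'=(1-q)D(1-p)=P_M D P_N$, which with respect to $N\oplus N^{\perp}\to M\oplus M^{\perp}$ has matrix $\begin{pmatrix}D_1 & 0\\ 0 & 0\end{pmatrix}$. An almost inverse $b$ is constrained by $b=(1-p)b(1-q)=P_N b P_M$, hence has matrix $\begin{pmatrix}b_1 & 0\\ 0 & 0\end{pmatrix}$ (as a map $M\oplus M^{\perp}\to N\oplus N^{\perp}$) for some $b_1\in B(M,N)$, and the defining relations $D'b=1-q=P_M$ and $bD'=1-p=P_N$ reduce exactly to $D_1b_1=1_M$ and $b_1D_1=1_N$. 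Thus everything is governed by one-sided and two-sided invertibility of $D_1$ as a map $N\to M$.

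For the two-sided equivalence, the forward direction is immediate: from $b_1D_1=1_N$ and $D_1b_1=1_M$ the operator $D_1$ is bijective with bounded inverse, i.e.\ an isomorphism. Conversely, if $D_1$ is an isomorphism, Lemma \ref{r12 l14} furnishes $S\in\mathcal{A}$ with matrix $\begin{pmatrix}D_1^{-1} & 0\\ 0 & 0\end{pmatrix}$ relative to $M\oplus M^{\perp}\to N\oplus N^{\perp}$; taking $b=S$ one verifies $b=P_N b P_M$ and, by the matrix product through the intermediate decomposition $N\oplus N^{\perp}$, that $D'S=P_M$ and $SD'=P_N$. Hence $D$ is invertible up to $(I-P_N,I-P_M)$ with almost inverse $S$.

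For the one-sided statements I would use the natural one-sided analogue of Definition \ref{d 11}. Left invertibility up to $(I-P_N,I-P_M)$ keeps only the relation $b_1D_1=1_N$, which holds for some bounded $b_1$ exactly when $D_1$ is bounded below; the forward direction is the estimate $\|x\|=\|b_1D_1x\|\le\|b_1\|\,\|D_1x\|$. For the converse, boundedness below makes $|D_1|=(D_1^{*}D_1)^{1/2}$ invertible in $B(N)$, and in the polar decomposition $D_1=U|D_1|$ the partial isometry satisfies $U^{*}U=1_N$; running the argument of Lemma \ref{r12 l14} verbatim (its only use of $D_1$ was invertibility of $|D_1|$, and $U^{*}U=1_N$ replaces $U^{*}U=UU^{*}=1$) yields an element of $\mathcal{A}$ with corner $|D_1|^{-1}U^{*}$, a left inverse of $D_1$. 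The right-invertible case I would deduce from the left-invertible one by passing to adjoints as in Lemma \ref{l 01}: $D$ is right invertible up to $(I-P_N,I-P_M)$ iff $D^{*}$ is left invertible up to $(I-P_M,I-P_N)$, whose governing corner is $D_1^{*}$, and $D_1^{*}$ is bounded below iff $D_1$ is surjective. The main obstacle is precisely this backward step in the one-sided cases, namely checking that the construction of Lemma \ref{r12 l14} continues to land inside $\mathcal{A}$ when $U$ is only an isometry (resp.\ co-isometry) rather than unitary; once that is confirmed the corollary is otherwise bookkeeping on the corner $D_1$.
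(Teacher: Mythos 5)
Your proof is correct, and for the reduction to the corner $D_{1}$ and the two-sided equivalence it coincides with the paper's argument (the paper simply cites Lemma \ref{r12 l14} for that part). Where you genuinely diverge is in the backward direction of the one-sided cases, which you rightly single out as the only real obstacle. You propose to reopen the proof of Lemma \ref{r12 l14} and check that it survives when the partial isometry $U$ in the polar decomposition of $D_{1}$ satisfies only $U^{*}U=1_{N}$; this does work --- $\tilde U$ is still the partial isometry from the polar decomposition of $P_{M}DP_{N}\in\mathcal{A}$, hence lies in $\mathcal{A}$, boundedness below still makes $|D_{1}|$ invertible, and the corner $|D_{1}|^{-1}U^{*}$ is a left inverse of $D_{1}$ --- but it forces you to re-verify the lemma's internal steps. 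The paper sidesteps this entirely: since $D_{1}$ is bounded below, $\tilde M:=Im\, D_{1}=Im\,(P_{M}DP_{N})$ is closed, so $P_{\tilde M}\in Proj(\mathcal{A})$, and $D_{1}$ regarded as a map from $N$ onto $\tilde M$ is an isomorphism; Lemma \ref{r12 l14} then applies verbatim with $M$ replaced by $\tilde M$ and delivers the left almost-inverse $S\in\mathcal{A}$ as a black box, after which one checks directly that $S$ is a left $(I-P_{N},I-P_{M})$-inverse of $D$. Both routes handle the surjective case by passing to adjoints as in Lemma \ref{l 01}. The paper's reduction is the cleaner of the two, since it needs no one-sided variant of Lemma \ref{r12 l14}; your version proves nothing more general here, though your observation that the polar-decomposition construction only uses $U^{*}U=1_{N}$ is sound and would carry the day if written out.
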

\begin{proof}
The first statement in the corollary follows from Lemma \ref{r12 l14} .\\
	Suppose next that $ D_{1} $ is bounded below. Then  $  P_{M} D  P_{N} $ is also bounded below since this operator has the matrix $
	\begin{pmatrix}
		D_{1} & 0 \\
		0 & 0
	\end{pmatrix} $ with respect to the decomposition  $N \oplus N^{ \perp} \rightarrow M \oplus M^{ \perp} .$ Let $ \tilde M = Im D_{1} ,$  which is equal to $ Im P_{M} D P_{N}  .$ Then $ \tilde M \subseteq M$ and $ P_{ \tilde M} \in Proj(\mathcal{A} ) .$ The operator  $ P_{ \tilde M}D_{1} $ is an isomorphism  from $ N$  onto $ \tilde M , $  and with respect to the decomposition $ N \oplus N^{ \perp} \rightarrow \tilde M \oplus  \tilde M^{ \perp} ,$ the operator $D$ has the matrix $
\begin{pmatrix}
P_{ \tilde M} D_{1} & 0 \\
0 & 0
\end{pmatrix} ,$ where $ P_{ \tilde M} D_{1} $ is an isomorphism. If $ S $ is the operator with the matrix $ \begin{pmatrix}
(P_{ \tilde M} D_{1} ) ^{ -1} & 0 \\
0 & 0
\end{pmatrix} $ with respect to the decomposition $  \tilde M \oplus  \tilde M^{ \perp} \rightarrow N \oplus N^{ \perp}  ,$ then $ S \in \mathcal{A} $ by Lemma \ref{r12 l14} . It is not hard to see that $S$ is left $ ( I -P_{N} , I - P_{M}) $ inverse of $D$ in $ \mathcal{A} .$ \\
If $ D_{1} $ is surjective, then by passing to the adjoints and using the previous arguments, we deduce the last statement in the corollary. 
	
\end{proof}

\begin{corollary} \label{projleft}
	Let $ P,Q \in Proj ( \mathcal{A} ) .$ Then $ I-P $ is invertible up to $ ( Q , P) $ if and only if $ H = ( I-Q ) (H) \tilde \oplus P(H) .$ 
\end{corollary}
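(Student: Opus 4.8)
The plan is to reduce the statement to Corollary~\ref{isom} by a suitable choice of subspaces and then to verify a purely geometric equivalence. Set $N := (I-Q)(H) = \ker Q$ and $M := (I-P)(H) = \ker P$. Since $P, Q \in \mathrm{Proj}(\mathcal{A})$, the orthogonal projections $P_N = I-Q$ and $P_M = I-P$ lie in $\mathcal{A}$, so the hypotheses of Corollary~\ref{isom} are satisfied for the operator $D := I-P$. The key bookkeeping observation is that the pair appearing in that corollary is $(I - P_N, I - P_M) = (Q, P)$, which is exactly the pair in the present statement.

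Next I would compute the $(1,1)$-block $D_1$ of $D = I-P$ with respect to the decomposition $N \oplus N^{\perp} \rightarrow M \oplus M^{\perp}$. Because $P_M = I - P$ and $T(N) \subseteq (I-P)(H) = M$, one gets $D_1 = P_M (I-P)|_N = (I-P)|_N$, regarded as a bounded operator from $N = \ker Q$ into $M = \ker P$. Corollary~\ref{isom} then says that $I-P$ is invertible up to $(Q,P)$ if and only if $D_1$ is an isomorphism, so the task reduces to showing that $D_1$ is an isomorphism if and only if $H = (I-Q)(H) \tilde\oplus P(H)$.

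For this geometric equivalence I would argue in two parts. Injectivity of $D_1$ means $(I-P)x = 0$ forces $x=0$ for $x \in \ker Q$; since $\ker(I-P) = P(H)$, this is precisely $\ker Q \cap P(H) = \{0\}$, the trivial-intersection half of the $\tilde\oplus$ decomposition. For surjectivity I would invoke the standard fact that, for the idempotent $T := I-P$, one has $T(\ker Q) = T(H)$ if and only if $\ker Q + \ker T = H$; since $T(H) = \ker P = M$ and $\ker T = P(H)$, surjectivity of $D_1$ onto $M$ is equivalent to $\ker Q + P(H) = H$, the spanning half. Combining the two gives $H = \ker Q \,\tilde\oplus\, P(H) = (I-Q)(H) \,\tilde\oplus\, P(H)$, as required, and the equivalence is bidirectional throughout.

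The step I expect to require the most care is the passage between an algebraic bijection and a genuine topological isomorphism whose inverse lies in $\mathcal{A}$. Once $\ker Q$ and $P(H)$ are recognized as closed subspaces summing to $H$ with trivial intersection, the continuous bijection $D_1 = (I-P)|_{\ker Q}$ between closed subspaces is a topological isomorphism by the bounded inverse theorem, and the boundedness together with the $\mathcal{A}$-membership of its inverse is exactly what Corollary~\ref{isom} (through Lemma~\ref{r12 l14}) supplies. Thus the only content beyond invoking Corollary~\ref{isom} is identifying $D_1$ with $(I-P)|_{\ker Q}$ and translating its bijectivity into the $\tilde\oplus$ decomposition.
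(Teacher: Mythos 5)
Your proof is correct and follows essentially the same route as the paper: both apply Corollary~\ref{isom} to $D=I-P$ with $N=(I-Q)(H)$, $M=(I-P)(H)$, identifying the relevant corner as $(I-P)|_{\ker Q}:\ker Q\to\ker P$. The only difference is that the paper dismisses the final geometric equivalence with ``it is not hard to see,'' whereas you spell it out (injectivity $\Leftrightarrow$ trivial intersection, surjectivity $\Leftrightarrow$ spanning, open mapping theorem for the topological isomorphism), which is a faithful filling-in of that step.
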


\begin{proof}
By Corollary \ref{isom} we have that $ I-P $ is invertible up to  $ ( Q , P) $ if and only if $ I-P $ has the matrix $ \begin{pmatrix}
	R_{1} & R_{2}  \\
	0 & 0
\end{pmatrix} $with respect to the decomposition $$ ( I - Q) (H)  \oplus Q(H) \rightarrow ( I - P) (H)  \oplus P(H) $$ where $ R_{1} $ is an isomorphism. However, it is not hard to see that this is satisfied if and only if $ H = ( I - Q) (H) \tilde \oplus P(H) .$	
\end{proof}

\section{Spectral Fredholm theory in von Neumann algebras} 

In this section, for fixed $S,T \in \mathcal{A} $ we consider $M_{C} \in M_{2} (\mathcal{A})$ given by 

$ M_{C}=
\begin{pmatrix}
T & C \\
0 & S 
\end{pmatrix} 
$ where $C$ varies over $ \mathcal{A}.$ 
Set $$\sigma_{ef}(M_{C}) = \lbrace \lambda \in \mathbb{C} \text{ } \vert \text{ } M_{C}-\lambda I \text{  is not  }   \mathcal{A}- \text{Fredholm} \rbrace.$$ 
Similarly we define $\sigma_{ef}(T) $ and $\sigma_{ef}(S) $ for $T,S \in \mathcal{A} .$ By applying Corollary \ref{r12 c14}, we may in a similar way as in the proof of \cite[Proposition 3.1]{IS6} show that $\sigma_{ef}(M^{C}) \subseteq   \sigma_{ef}(T) \cup    \sigma_{ef}(S) .$

Now we are ready to give the main result in this section, which is a generalization of the result by \DJ{}or\dj{}evi\'{c} in \cite{DDj} in the setting of Fredholm operators in von Neumann algebras.

\begin{proposition} \label{r12 p16}
	Let $ \mathcal{A}$ be a properly infinite von Neumann algebra and $T,S \in \mathcal{A} .$ If there exists some $C \in \mathcal{A} $ such that the inclusion $\sigma_{ef}(M_{C}) \subset \sigma_{ef}(T) \cup \sigma_{ef}(S)  $ is proper, then $\sigma_{ef}(T) \cup \sigma_{ef}(S) = \sigma_{ef}(M_{C}) \cup (\sigma_{ef}(T) \cap \sigma_{ef}(S) )  .$
\end{proposition}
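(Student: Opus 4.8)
Write $A=\sigma_{ef}(T)$, $B=\sigma_{ef}(S)$ and $M=\sigma_{ef}(M_{C})$, and recall that the inclusion $M\subseteq A\cup B$ is already available (it is recorded just before the statement, via Corollary \ref{r12 c14}). Since both $M$ and $A\cap B$ are contained in $A\cup B$, the inclusion $M\cup(A\cap B)\subseteq A\cup B$ is automatic, so the entire content of the proposition is the reverse inclusion $A\cup B\subseteq M\cup(A\cap B)$. Chasing definitions, this is equivalent to $(A\cup B)\setminus M\subseteq A\cap B$: whenever $M_{C}-\lambda I$ is $\mathcal{A}$-Fredholm but at least one of $T-\lambda,\,S-\lambda$ fails to be $\mathcal{A}$-Fredholm, both must fail. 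Put differently, the plan is to prove the dichotomy that it can never happen that $M_{C}-\lambda I$ is $\mathcal{A}$-Fredholm while exactly one of $T-\lambda$, $S-\lambda$ is $\mathcal{A}$-Fredholm.

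First I would record the easy half of the structure: if $M_{C}-\lambda I$ is $\mathcal{A}$-Fredholm, then $T-\lambda$ is upper semi-$\mathcal{A}$-Fredholm and $S-\lambda$ is lower semi-$\mathcal{A}$-Fredholm. This follows from the triangular shape. Indeed $\ker(T-\lambda)\oplus 0\subseteq\ker(M_{C}-\lambda I)$, so $P_{\ker(T-\lambda)}\leq P_{\ker(M_{C}-\lambda I)}$, and the right-hand projection is finite because $M_{C}-\lambda I$ is $\mathcal{A}$-Fredholm; hence $P_{\ker(T-\lambda)}\in Proj_{0}(\mathcal{A})$. Passing to $M_{C}^{*}$, which is lower triangular with diagonal $T^{*},S^{*}$, the same observation gives $0\oplus\ker(S-\lambda)^{*}\subseteq\ker(M_{C}-\lambda I)^{*}$, so $P_{\ker(S-\lambda)^{*}}\in Proj_{0}(\mathcal{A})$, i.e. $S-\lambda$ is lower semi-$\mathcal{A}$-Fredholm.

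The core is the complementary step: if, in addition, $S-\lambda$ is $\mathcal{A}$-Fredholm, then $T-\lambda$ is $\mathcal{A}$-Fredholm (and symmetrically by duality). Since $T-\lambda$ is already upper semi-$\mathcal{A}$-Fredholm, what remains is to show that the cokernel projection $P_{\ker(T-\lambda)^{*}}$ is finite. For this I would run a kernel/cokernel chase on the adjoint system $(T-\lambda)^{*}a=0$, $C^{*}a+(S-\lambda)^{*}b=0$: because $S-\lambda$ is $\mathcal{A}$-Fredholm, $Im(S-\lambda)^{*}$ is closed with finite co-defect, so the set of $a\in\ker(T-\lambda)^{*}$ with $-C^{*}a\in Im(S-\lambda)^{*}$ is ``finite-codimensional'' inside $\ker(T-\lambda)^{*}$, and each such $a$ lifts to an element $(a,b)$ of the finite projection $\ker(M_{C}-\lambda I)^{*}$. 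Translating this into the Murray--von Neumann order, $P_{\ker(T-\lambda)^{*}}$ is dominated, up to a finite error coming from the co-defect of $(S-\lambda)^{*}$ and from $C(\ker(S-\lambda))$, by a projection equivalent to a subprojection of the finite projection $P_{\ker(M_{C}-\lambda I)^{*}}$; using that subprojections and finite sums of finite projections are finite (Lemma \ref{r12 l13} and Lemma \ref{r12 l12}) one gets $P_{\ker(T-\lambda)^{*}}\in Proj_{0}(\mathcal{A})$. Combined with closedness of $Im(T-\lambda)$ (inherited from the almost-invertibility of $M_{C}-\lambda I$ through Lemma \ref{r12 l11} and Corollary \ref{r12 c17}), this supplies condition (ii) of the definition of $\mathcal{A}$-Fredholm, so $T-\lambda$ is $\mathcal{A}$-Fredholm. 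The passage to adjoints (Lemma \ref{l 01}) then yields the symmetric statement ``$T-\lambda$ and $M_{C}-\lambda I$ $\mathcal{A}$-Fredholm $\Rightarrow$ $S-\lambda$ $\mathcal{A}$-Fredholm''.

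Combining the two steps gives the dichotomy for every $\lambda$, hence $(A\cup B)\setminus M\subseteq A\cap B$ and the identity. I read the properness hypothesis as selecting the non-trivial regime rather than as logically indispensable: it guarantees a point $\mu$ at which $M_{C}-\mu I$ is $\mathcal{A}$-Fredholm while some diagonal entry is not, so that the conclusion $\mu\in\sigma_{ef}(T)\cap\sigma_{ef}(S)$ carries genuine content (if the inclusion were an equality the identity would be immediate, since then $M\cup(A\cap B)=A\cup B$). I expect the complementary step to be the main obstacle: in the von Neumann setting one cannot simply invoke ``finiteness of the index'', so the finite-codimension/lifting bookkeeping has to be executed entirely in terms of finite projections, closed ranges and Murray--von Neumann equivalence, with careful control of the finite error projections.
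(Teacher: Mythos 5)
Your global strategy coincides with the paper's: reduce the identity to the dichotomy that if $M_{C}-\lambda I$ is $\mathcal{A}$-Fredholm and one diagonal entry is $\mathcal{A}$-Fredholm then so is the other, with the properness hypothesis playing no logical role. But the route you propose for the dichotomy has a genuine gap, located exactly where you predict the difficulty to be. You plan to certify that $T-\lambda$ is $\mathcal{A}$-Fredholm by checking ``closed range plus finite kernel and cokernel projections.'' In Breuer's setting this criterion is sufficient, but its closed-range ingredient is not available: condition $(ii)$ of the definition asks for a finite projection $E$ with $Im(I-E)\subseteq Im(T-\lambda)$, and if $Im(T-\lambda)$ is not closed, finiteness of $P_{\ker (T-\lambda)^{*}}$ (which only controls $\overline{Im(T-\lambda)}$) gives nothing. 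Your assertion that closedness of $Im(T-\lambda)$ is ``inherited from the almost-invertibility of $M_{C}-\lambda I$'' is unjustified and false in general: invertibility up to a pair $(P,Q)$ with $P$ finite only yields $Im(T-\lambda)=(T-\lambda)(Im(I-P))+(T-\lambda)(P(H))$, the sum of a closed subspace and a subspace whose closure has finite projection; since a finite projection in a properly infinite von Neumann algebra may have infinite-dimensional range, such a sum need not be closed (unlike the classical case, where finite means finite rank). The same issue already touches your ``easy half'': a finite kernel projection alone does not make $T-\lambda$ upper semi-$\mathcal{A}$-Fredholm, because Definition~\ref{r12 d15} demands invertibility up to a pair, i.e.\ a range-type condition as well. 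Finally, the core cokernel chase is left as a plan; the finite-codimension and lifting bookkeeping you describe is precisely the content that would have to be proved.

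For contrast, the paper avoids range questions entirely. It factors $M_{C}=S^{\prime}C^{\prime}T^{\prime}$ with $T^{\prime}=\mathrm{diag}(T,1)$, $S^{\prime}=\mathrm{diag}(1,S)$ and $C^{\prime}$ invertible, uses the product decomposition of invertibility up to pairs to obtain $T^{\prime}$ invertible up to $(P,R)$ and, after absorbing $C^{\prime}$, $S^{\prime}$ invertible up to $(\tilde{R}^{\prime},Q)$ with $\tilde{R}^{\prime}\sim R$; then, assuming $T$ is $\mathcal{A}$-Fredholm, it transports finiteness through the comparison machinery of Lemma~\ref{l 02} and Corollary~\ref{r8 cor2.4} (via $R\sim E$, $\tilde{R}^{\prime}\sim L$, $L+\tilde{L}\sim L^{\prime}+\tilde{L}^{\prime}$) down to the projection $r^{\prime}$ witnessing lower semi-Fredholmness of $S$. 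Your adjoint-system computation could plausibly be completed for the cokernel projection itself --- the intersection and image projections you need do lie in $\mathcal{A}$ by Remark~\ref{rem101} and Lemma~\ref{r12 l13} --- but even then it lands one closed-range hypothesis short of Breuer's definition; to repair the approach you would have to construct directly a finite projection $E$ with $Im(I-E)\subseteq Im(T-\lambda)$, which is essentially what the paper's invertibility-up-to-pairs bookkeeping accomplishes.
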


\begin{proof}
	As in the proof of \cite[Theorem 3.2]{IS6}, we write $M_{C} $ as $M_{C}=S^{\prime}C^{\prime}T^{\prime}$ where 
	$S^{\prime}=
	\begin{pmatrix}
	1 & 0 \\
	0 & S 
	\end{pmatrix} 
	,$
	$C^{\prime}=
	\begin{pmatrix}
	1 & C \\
	0 & 1 
	\end{pmatrix}
	$  
	and 
	$T^{\prime}=
	\begin{pmatrix}
	T & 0 \\
	0 & 1 
	\end{pmatrix}
	.$  
	Assume that $M_{C} $ is $\mathcal{A} -$Fredholm and let $P,Q \in Proj_{0}(M_{2} (\mathcal{A})) $ such that $M_{C} $ is invertible up to $(P, Q).$ By \cite[Corollary 6]{BJMA}, there is some $R \in Proj (M_{2} (\mathcal{A}))$ such that $ T^{\prime}$ is invertible up to $(P, R), S^{\prime}C^{\prime}$ is invertible up to $(R, Q)$ and $(I-R)T^{\prime}(I-P)=T^{\prime}(I-P) .$ Since $C^{\prime} $ is invertible, we have that $\tilde{R} \sim R ,$ where $\tilde{R} $ is the orthogonal projection onto $C^{\prime}R(H^{2}) .$ Moreover, $H^{2}=C^{\prime} (I-R)(H^{2}) \tilde{\oplus} C^{\prime} R(H^{2}) .$ If $\tilde{R}^{\prime} $ denotes the orthogonal projection onto $C^{\prime} (I-R)(H^{2})^{\perp} ,$ then obviously $\tilde{R}^{\prime} $ maps $C^{\prime} R(H^{2}) $ isomorphically onto $\tilde{R}^{\prime} (H^{2}) .$ Thus $\tilde{R}^{\prime} \sim \tilde{R} \sim R .$ Now, $C^{\prime} $ is invertible up to $( R,\tilde{R}^{\prime} ) $ and $(I-\tilde{R}^{\prime}) C^{\prime} (I-R) = C^{\prime} (I-R).$ Since $S^{\prime} C^{\prime} $ is invertible up to $(R, Q),$  we can deduce that $S^{\prime} $ is invertible up to $(\tilde{R}^{\prime} , Q) .$ Indeed, let $B$ be $(R, \tilde{R}^{\prime})$-inverse of $C^{\prime}$ and $\tilde{B}$ be $(R,Q)$-inverse of $S^{\prime}C^{\prime}.$ Then we get 
	$$C^{\prime}(I-R)\tilde{B}(I-Q)S^{\prime}(I-\tilde{R}^{\prime}) = 
	C^{\prime}(I-R)\tilde{B}(I-Q)S^{\prime}(I-\tilde{R}^{\prime})C^{\prime}(I-R)B= $$ 
	$$C^{\prime}(I-R)\tilde{B}(I-Q)S^{\prime} C^{\prime} (I-R)B= C^{\prime} (I-R)B =
	(I-\tilde{R}^{\prime})  C^{\prime} (I-R)B = I-\tilde{R}^{\prime} ,$$
	and 
	$$(I-Q)S^{\prime}(I-\tilde{R}^{\prime})C^{\prime}(I-R) \tilde{B}=
	(I-Q)S^{\prime}C^{\prime} (I-R) \tilde{B} = (I-Q) ,$$ so $ C^{\prime}(I-R)\tilde{B} $ is an $(\tilde{R}^{\prime} , Q)-$inverse of $ S^{\prime} .$   
	Hence, in particular we have that $T^{\prime} $ is left invertible up to $P$ and $S^{\prime}$ is right invertible up to $Q$.
	If we write $P $ and $Q$ as 
	$P=
	\begin{pmatrix}
	P_{1} & P_{2} \\
	P_{3} & P_{4} 
	\end{pmatrix}
	,$
	$Q=
	\begin{pmatrix}
	Q_{1} & Q_{2} \\
	Q_{3} & Q_{4} 
	\end{pmatrix}
	,$
	where $P_{j},Q_{j} \in \mathcal{A} $ for $ j \in \lbrace 1, \dots 4 \rbrace,$ then it follows that $FT=1-P_{1} $ and $SD=1-Q_{4} $ for some operators $F,D \in \mathcal{A} .$ By Corollary \ref{r12 c14} we have that $P_{1}, Q_{4} \in \mathfrak{m} ,$
	hence, by \cite[Lemma 10]{BJMA} and Corollary \ref{r12 c17}  we deduce that $T$ and $S$ are upper semi$- \mathcal{A} - $Fredholm and lower semi$- \mathcal{A} -$Fredholm, respectively.\\
	
	 If $p,q \in Proj_{0} (\mathcal{A})$ and $ r,r^{\prime} \in Proj (\mathcal{A}) $ such that $T$ invertible up to $(p,r)$ and $S$ is invertible up to $(r^{\prime},q),$ then, obviously, $T^{\prime} $ is invertible up to 
	$ \left( 
	\begin{pmatrix}
	p & 0 \\
	0 & 0 
	\end{pmatrix}
	,
	\begin{pmatrix}
	r & 0 \\
	0 & 0 
	\end{pmatrix}
	\right)$
	and $ S^{\prime}$ is invertible up to 
	$\left( 
	\begin{pmatrix}
	0 & 0 \\
	0 & r^{\prime} 
	\end{pmatrix}
	,
	\begin{pmatrix}
	0 & 0 \\
	0 & q 
	\end{pmatrix}
	\right).$ 
	By Lemma \ref{r12 l12} it follows that 
	$ \left( 
	\begin{pmatrix}
	p & 0 \\
	0 & 0 
	\end{pmatrix}
	,
	\begin{pmatrix}
	0 & 0 \\
	0 & q 
	\end{pmatrix}
	\right)
	\in Proj_{0} (M_{2}(\mathcal{A})) .$ 
	Hence, by Corollary \ref{r8 cor2.4}, we deduce that there exist projections $ E, \tilde{E}, E^{\prime}, \tilde{E}^{\prime} , L  , \tilde{L}  , L^{\prime}  ,   \tilde{L}^{\prime}$ in $\mathcal{A}$ such that $\tilde{E}  , \tilde{E}^{\prime} , \tilde{L}  , \tilde{L}^{\prime} $ are finite, $E \tilde{E} = E^{\prime}  \tilde{E}^{\prime}  = L \tilde{L} = L^{\prime}  \tilde{L}^{\prime}=0, $ $R \sim E,$ 
	$
	\begin{pmatrix}
	r & 0 \\
	0 & 0 
	\end{pmatrix}
	\sim E^{\prime}, $ $\tilde{R}^{\prime} \sim L, $ 
	$
	\begin{pmatrix}
	0 & 0 \\
	0 & r^{\prime} 
	\end{pmatrix}
	\sim L^{\prime}, $ 
	$  E  + \tilde{E}  \sim E^{\prime} +  \tilde{E}^{\prime} $ and $ L  + \tilde{L}  \sim L^{\prime} +  \tilde{L}^{\prime}.$

	Suppose now that $T$ is $\mathcal{A}-$Fredholm. Then by Lemma \ref{l 02} and by (\ref{f1}), we must have that $ r \in Proj_{0}(\mathcal{A}) .$ Hence, by Lemma \ref{r12 l12} we get that 
	$
	\begin{pmatrix}
	r & 0 \\
	0 & 0 
	\end{pmatrix}
	$ is finite, so $E^{\prime} \in Proj_{0}(M_{2} (\mathcal{A})) ,$ as 
	$ E^{\prime} \sim
	\begin{pmatrix}
	0 & 0 \\
	0 & r 
	\end{pmatrix}
	,$
	which gives that $E^{\prime} + \tilde{E}^{\prime} $ is finite. Therefore, $ E  + \tilde{E}  \in Proj_{0}(M_{2} (\mathcal{A})),$ hence $E \in Proj_{0}(M_{2} (\mathcal{A})) $ since $\tilde{E} $ is finite. However, $L \sim \tilde{R}^{\prime} \sim R \sim E ,$ so we get that $L + \tilde{L} $ is finite since $L, \tilde{L} \in Proj_{0}(M_{2} (\mathcal{A})) .$ 
	Thus, $L^{\prime} + \tilde{L}^{\prime} \in Proj_{0}(M_{2} (\mathcal{A})) $ since $ L + \tilde{L} \sim L^{\prime} + \tilde{L}^{\prime} ,$ so we must have that $L^{\prime} \in Proj_{0}(M_{2} (\mathcal{A})) $ because $\tilde{L}^{\prime} \in Proj_{0}(M_{2} (\mathcal{A})) .$ It follows that 
	$
	\begin{pmatrix}
	0 & 0 \\
	0 & r^{\prime} 
	\end{pmatrix}
	\in Proj_{0}(M_{2} (\mathcal{A}))$ 
	as 
	$
	\begin{pmatrix}
	0 & 0 \\
	0 & r^{\prime} 
	\end{pmatrix} 
	\sim L^{\prime} ,$ 
	hence, by Lemma \ref{r12 l12} we obtain that $r^{\prime} \in Proj_{0}(\mathcal{A}) .$ Since $S$ is invertible up to $(r^{\prime}, q) ,$ we get that $S$ is $\mathcal{A}-$Fredholm. Similarly we can show that $T$ is $\mathcal{A} -$Fredholm if $S$ is $\mathcal{A} -$Fredholm. If now $\lambda \in \mathbb{C} ,$ we can apply previous arguments to deduce that if $M_{C}- \lambda I $ and $ T - \lambda 1$ are $\mathcal{A}- $Fredholm, then $S-\lambda1$ is $\mathcal{A}- $Fredholm, and, similarly, if $M_{C} - \lambda I $ and $S-\lambda1 ,$ are $\mathcal{A} -$Fredholm, then $T-\lambda 1 $ is $\mathcal{A} -$Fredholm. This is because 
	$ 
	M_{C} - \lambda I =
	\begin{pmatrix}
	T - \lambda 1 && 0 \\
	0 & S - \lambda 1 
	\end{pmatrix} 
	,$ 
	so we can apply the previous arguments for arbitrary $ \lambda \in \mathbb{C} .$ Hence we can deduce that $$ ( \sigma_{ef}(T) \setminus \sigma_{ef}(S) ) \setminus \sigma_{ef} (M_{C}) = \varnothing , ( \sigma_{ef}(S) \setminus  \sigma_{ef}(T) ) \setminus \sigma_{ef} (M_{C}) = \varnothing  ,$$ which gives that $\sigma_{ef}(T) \cup \sigma_{ef}(S) = ( \sigma_{ef}(T) \cap \sigma_{ef}(S) ) \cup  \sigma_{ef} (M_{C}). $
	\end{proof}

Next, we wish to consider isolated points of the spectra of operators in $\mathcal{A} .$ We wish to show that if $0$ is an isolated point of the spectrum of an $\mathcal{A}-$ Fredholm operator, then the corresponding spectral projection is finite. To this end, we give first the following proposition. 

\begin{proposition} \label{r12 p17}
	Let $F$ be $\mathcal{A}-$ Fredholm and $P_{0} $ be some skew or orthogonal projection in $\mathcal{A} .$ Suppose that $F$ has the matrix 
	$\begin{pmatrix}
	 F_{1} 	& 0 \\
	 0 & F_{4} 
	\end{pmatrix} 
$ 
with respect to the decomposition $ H= \ker P_{0} \tilde \oplus Im P_{0} ,$ where $ F_{1} $ is an isomorphism. If $ P_{0} $ is not a finite operator, then $ P_{ Im P_{0} } F  P_{ Im P_{0} } \in \mathcal{K} \Phi ( P_{ Im P_{0} } \mathcal{A}  P_{ Im P_{0} } ) .$
\end{proposition}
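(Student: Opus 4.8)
The plan is to reduce the statement to the case of an orthogonal projection and then read off the Fredholm property of the compression from the block structure of $F$. Write $M=Im\,P_{0}$, $N=\ker P_{0}$ and $P=P_{Im P_{0}}$, so that $\mathcal{B}:=P\mathcal{A}P$ is the von Neumann algebra in which we work. First I would record that, since $F$ is block diagonal with respect to $H=N\,\tilde\oplus\,Im P_{0}$, it commutes with the idempotent $P_{0}$, and a direct computation on $M$ shows that $PFP$ coincides with $F_{4}$ viewed as an element of $\mathcal{B}$: for $x\in M$ one has $Px=x$, $Fx=F_{4}x\in M$ and $P(F_{4}x)=F_{4}x$. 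Thus it suffices to exhibit finite projections $p,q\le P$ in $\mathcal{A}$ such that $F_{4}$ is invertible up to $(p,q)$ in $\mathcal{B}$, which by Definition \ref{d 12} is exactly the assertion $F_{4}\in\mathcal{K}\Phi(\mathcal{B})$.

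To remove the non-orthogonality of the decomposition when $P_{0}$ is a skew projection, I would conjugate by $V=PP_{0}+(I-P)(I-P_{0})\in\mathcal{A}$. One checks that $V$ fixes $M$ pointwise, maps $N$ bijectively onto $M^{\perp}$, is therefore invertible in $\mathcal{A}$, and satisfies $VP_{0}V^{-1}=P$. Setting $\tilde F=VFV^{-1}$, the operator $\tilde F$ is again $\mathcal{A}$-Fredholm (conjugation by an invertible element of $\mathcal{A}$ preserves $\mathcal{A}$-Fredholmness), commutes with the orthogonal projection $P$, and is block diagonal $\tilde F=\tilde F_{1}\oplus\tilde F_{4}$ with respect to the orthogonal decomposition $H=M^{\perp}\oplus M$, with $\tilde F_{1}$ an isomorphism of $M^{\perp}$. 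Since $V$ is the identity on $M$, one verifies $\tilde F_{4}=F_{4}=PFP$. Hence I may assume from now on that $P_{0}=P$ is orthogonal and $F=F_{1}\oplus F_{4}$ relative to $M^{\perp}\oplus M$ with $F_{1}$ an isomorphism.

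In this orthogonal situation the block structure makes the kernel, cokernel and range of $F_{4}$ transparent. Because $F_{1}$ is an isomorphism, $\ker F=\ker F_{4}\subseteq M$ and $\ker F^{*}=\ker F_{4}^{*}\subseteq M$; as $F$ is $\mathcal{A}$-Fredholm, both $P_{\ker F}$ and $P_{\ker F^{*}}$ lie in $Proj_{0}(\mathcal{A})$, so $p:=P_{\ker F_{4}}$ and $q:=P_{\ker F_{4}^{*}}=P-P_{\overline{Im F_{4}}}$ are finite projections below $P$. Next I would establish that $F$ has closed range: by Lemma \ref{r12 l11} there are finite $P',Q'$ with $F$ invertible up to $(P',Q')$, whence $Im F+Q'(H)=H$, and together with the finiteness of $P_{\ker F}$ this forces $Im F$ to be closed. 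Since $Im F=M^{\perp}\oplus Im F_{4}$, intersecting with $M$ gives that $Im F_{4}=Im F\cap M$ is closed in $M$. Therefore $F_{4}$ maps $(P-p)(M)=(\ker F_{4})^{\perp}\cap M$ isomorphically onto $\overline{Im F_{4}}=(P-q)(M)$, and by Lemma \ref{r12 l14}, applied inside $\mathcal{B}$, the inverse extended by $0$ belongs to $\mathcal{B}$. This shows $F_{4}=PFP$ is invertible up to the finite pair $(p,q)$, so $PFP\in\mathcal{K}\Phi(\mathcal{B})$. The hypothesis that $P_{0}$ is not finite enters precisely here: it guarantees that $P=P_{Im P_{0}}$ is an infinite projection, so that the finite operators of $\mathcal{B}$ form a legitimate family of finite type elements and $\mathcal{K}\Phi(\mathcal{B})$ is defined at all.

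The main obstacle is the skew case. The non-orthogonality of $N\,\tilde\oplus\,Im P_{0}$ obstructs the clean use of adjoints, of Lemma \ref{r12 l14}, and of the orthogonal splitting $Im F=M^{\perp}\oplus Im F_{4}$; the similarity $V$ is the device that converts the skew decomposition into an orthogonal one while leaving $PFP$ unchanged, and verifying that $V$ is invertible in $\mathcal{A}$ together with $\tilde F_{4}=F_{4}$ is the one computation that must be carried out with care. The secondary point to pin down is the closed range of $F$ (equivalently of $F_{4}$), which I extract from the almost invertibility provided by Lemma \ref{r12 l11}.
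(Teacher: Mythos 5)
Your reduction to the case of an orthogonal projection via $V=PP_{0}+(I-P)(I-P_{0})$ is correct, and the identification $PFP=F_{4}$ is fine. The genuine gap is in the second half: the claim that $Im F$ is closed. From $F$ being invertible up to a finite pair $(P',Q')$ you do get $Im F+Q'(H)=H$, but in a von Neumann algebra a finite projection $Q'$ may have infinite-dimensional range, so this identity, even combined with $P_{\ker F}\in Proj_{0}(\mathcal{A})$, does not force $Im F$ to be closed. Concretely, in a II$_{\infty}$ factor write a nonzero finite projection as $P'=\sum_{n}P_{n}$ with the $P_{n}$ mutually orthogonal and nonzero, and set $F=(I-P')+\sum_{n}\frac{1}{n}P_{n}$ and $P_{0}=I$. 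Then $F$ is injective and invertible up to $\bigl(\sum_{n>N}P_{n},\sum_{n>N}P_{n}\bigr)$, hence $\mathcal{A}$-Fredholm, and all hypotheses of the proposition hold, yet $Im F$ is dense and not closed. Consequently your chosen pair $p=P_{\ker F_{4}}$, $q=P_{\ker F_{4}^{*}}$ does not witness almost invertibility: $F_{4}$ need not be bounded below on $(\ker F_{4})^{\perp}\cap M$, so it does not map that subspace isomorphically onto $(P-q)(M)$, and Lemma \ref{r12 l14} cannot be applied.

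This is precisely where Breuer's theory departs from the classical Hilbert-space picture, and it is why the paper never uses the kernel and cokernel projections as the almost-invertibility pair. Instead it takes orthogonal projections $\tilde P,\tilde Q$ with $I-\tilde P$ and $I-\tilde Q$ finite, $F$ invertible up to $(I-\tilde P,I-\tilde Q)$ and $(I-\tilde Q)F\tilde P=0$, so that $F$ is bounded below on $\tilde P(H)$. It then restricts to $Im P_{0}\cap\tilde P(H)$, on which $F$ is automatically an isomorphism onto a closed image contained in $Im P_{0}$, and shows that the complements $N$ of $Im P_{0}\cap\tilde P(H)$ and $M$ of $F(Im P_{0}\cap\tilde P(H))$ inside $Im P_{0}$ carry finite projections: $P_{N}$ because $I-\tilde P$ is injective on $N$ and is a finite operator, and $P_{M}$ via Lemma \ref{l 02} applied to the two almost-invertibility pairs of $F$. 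The pair $(P_{N},P_{M})$ is the one that works. To repair your argument you would need to replace $(p,q)$ by a pair of this kind rather than by the kernel projections; the closed-range step cannot be salvaged as stated.
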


\begin{proof} 
	Since $F$ is $\mathcal{A}-$ Fredholm, there exist some orthogonal 
	projections $ \tilde P , \tilde Q $ in $\mathcal{A} $ such that $I- \tilde P$ and $ I - \tilde Q $ are finite and such that $F$ is invertible up to $ ( I - \tilde P, I - \tilde Q ) .$ By \cite[Proposition 2.8]{KL} we may without loss of generality assume that $ (I-\tilde Q) F \tilde P = 0 .$ We have 
	$ \tilde P (H) ^ {\perp }  \oplus ( Im P_{0} \cap \tilde P (H) ) = \ker ( ( I - P_{0} ) \tilde P ) .$ Hence $ P_{\tilde P (H) ^ {\perp }  \oplus ( Im P_{0} \cap \tilde P (H) )} \in \mathcal{A} . $ Since $ Im P_{0} \cap \tilde P (H) = Im ( ( I - \tilde P )  P_{\tilde P (H) ^ {\perp }  \oplus ( Im P_{0} \cap \tilde P (H) )} ) ,$ we get that $ P_{Im P_{0} \cap \tilde P (H) } \in \mathcal{A} .$\\
	
	 Let $N$ be the orthogonal complement of  $Im P_{0} \cap \tilde P (H) $ in $ Im P_{0} .$ Then $ P_{N} = P_{ Im P_{0} } - P_{ Im P_{0} \cap \tilde P (H) } \in \mathcal{A} .$ Now, $ I- \tilde P $ is injective on $ N ,$ hence $ \ker ( ( I - \tilde P ) P_{N} ) ^{ \perp } = N .$ Therefore,  $ P_{N} \sim P_{ \overline{ Im ( I - \tilde P ) P_{N} } } .$ Since $ ( I - \tilde P ) P_{N} $ is a finite operator because $ ( I - \tilde P ) $ is a finite operator, we get that $ P_{ \overline{ Im ( I - \tilde P ) P_{N} } } \in Proj_{0} ( \mathcal{A} ) .$ Hence $ P_{N} \in Proj_{0} ( \mathcal{A} ) .$\\
	 
	  Notice that, since $ (I - \tilde Q ) F \tilde P = 0 $ and $F$ is invertible up to $ ( I - \tilde P , I - \tilde Q ) ,$ we have that $F$ maps $ Im \tilde P $ isomorphically onto $ F ( \tilde P (H) ) = \tilde Q (H) .$ It follows that $F$ maps $ Im P_{0} \cap \tilde P (H)$ isomorphically onto $ F (Im P_{0} \cap \tilde P (H)) ,$ so $ F (Im P_{0} \cap \tilde P (H)) $ is closed. Since $ F (Im P_{0} \cap \tilde P (H)) = Im F P_{Im P_{0} \cap \tilde P (H)} ,$ we have that $ P_{ F (Im P_{0} \cap \tilde P (H))} \in \mathcal{A} .$  If $M$ denotes the orthogonal complement of $  F (Im P_{0} \cap \tilde P (H)) $ in $ Im P_{0} ,$ then, since $ P_{M} = P_{ Im P_{0} } - P_{ F (Im P_{0} \cap \tilde P (H))} ,$ we have that $ P_{M} \in \mathcal{A} .$ \\
	Observe now that $F$ has the matrix $\begin{pmatrix}
		F_{1} 	& F_{2} \\
		0 & F_{4} 
	\end{pmatrix} 
	,$ with respect to the decomposition $  ( \ker P_{0} \tilde \oplus (Im P_{0} \cap \tilde P (H)) ) \tilde \oplus N \rightarrow  ( \ker P_{0} \tilde \oplus F(Im P_{0} \cap \tilde P (H)) ) \tilde \oplus M ,$ where $ F_{1} $ is an isomorphism. Set $$ \tilde N = \ker P_{0} \tilde \oplus (Im P_{0} \cap \tilde P (H)) ,$$  $$ \tilde M =  \ker P_{0} \tilde \oplus F(Im P_{0} \cap \tilde P (H)) .$$ Then, since $ \tilde M = Im ( I - P_{0} + P_{ F(Im P_{0} \cap \tilde P (H))} P_{0} )$ and $ \tilde N = Im ( I - P_{0} + P_{ Im P_{0} \cap \tilde P (H)} P_{0} ) ,$ we have that $ P_{ \tilde M} ,  P_{ \tilde N} \in \mathcal{A} .$ Hence $ P_{ \tilde M ^{ \perp} } ,  P_{ \tilde N^{ \perp } } \in \mathcal{A} .$ Since $ H = \tilde N \tilde \oplus N ,$ we have that $ P_{ \tilde N^{ \perp } } $ is injective on $N$ and $ \tilde N^{ \perp} = P_{ \tilde N^{ \perp } } (N) .$ Hence, we get that $ \ker P_{ \tilde N^{ \perp } } P_{N} = N^{ \perp } $ and $ Im P_{ \tilde N^{ \perp } } P_{N} = \tilde N^{ \perp} .$ Therefore, $ P_{N} \sim P_{ \tilde N^{ \perp } } ,$ so $ P_{ \tilde N^{ \perp } } \in Proj_{0} ( \mathcal{A} ) .$ Likewise, $ P_{M} \sim P_{ \tilde M^{ \perp } } .$ Now, since $F$ maps $ \tilde N $ isomorphically onto $ \tilde M ,$ then by Lemma \ref{r12 l14} we have that $F$ is invertible up to $ ( P_{ \tilde N^{ \perp } } , P_{ \tilde M^{ \perp } }  ) $ in $ \mathcal{A} .$ Since $F$ is $ \mathcal{A} -$ Fredholm and $  P_{ \tilde N^{ \perp } } \in Proj_{0} ( \mathcal{A} ) $, by Lemma \ref{l 02} and (\ref{f1}) we must have that $ P_{ \tilde M^{ \perp } } \in Proj_{0} ( \mathcal{A}) .$ Thus, $ P_{M} \in Proj_{0} ( \mathcal{A}) .$ \\
	Consider next the von Neumann algebra $ P_{ Im P_{0 } } \mathcal{A} P_{ Im P_{0 } } .$ If $ P_{ Im P_{0 } } $ is not finite, then $ P_{ Im P_{0 } } \mathcal{A} P_{ Im P_{0 } } $ is also a properly infinite von Neumann algebra. Now, $ P_{M} , P_{N} \in Proj_{0} ( P_{ Im P_{0 } }  \mathcal{A} P_{ Im P_{0 } } ) .$ Since $ P_{Im P_{0} \cap \tilde P (H) } = P_{ Im P_{0} } - P_{N} ,  P_{ F (Im P_{0} \cap \tilde P (H) ) } = P_{ Im P_{0} } - P_{M} $ and $F$ maps $ Im P_{0} \cap \tilde P (H) $ isomorphically onto $ F (Im P_{0} \cap \tilde P (H)) ,$ it follows by Corollary \ref{isom} that $ P_{ Im P_{0} } F P_{ Im P_{0} } ,$ which is equal to $ F P_{ Im P_{0} } ,$ is invertible up to $ ( P_{N} , P_{M} ) $ in $ P_{ Im P_{0 } } \mathcal{A} P_{ Im P_{0 } } .$ Hence $ P_{ Im P_{0} } F P_{ Im P_{0} } \in \mathcal{K} \Phi (P_{ Im P_{0 } } \mathcal{A} P_{ Im P_{0 } }). $
	\end{proof} 

We can deduce now the desired result concerning spectral projections as a corollary of Proposition \ref{r12 p17}.  
\begin{corollary}\label{spectralprojection}
	Let $ F \in \mathcal{A} $ and $\alpha$ be an isolated point of  $ \sigma (F).$ If $ F - \alpha I $ is $ \mathcal{A} -$ Fredholm and $ P_{0} $ is the spectral projection corresponding to $ \alpha ,$ then $ P_{0} $ is finite operator. 
	\end{corollary}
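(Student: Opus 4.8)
The plan is to reduce the statement to Proposition \ref{r12 p17} applied to the shifted operator $G := F - \alpha I$, and then to rule out the conclusion of that proposition by a spectral argument inside a corner algebra. First I would observe that, since $\alpha$ is an isolated point of $\sigma(F)$, the Riesz spectral projection $P_0 = \frac{1}{2\pi i}\oint_{\Gamma}(\lambda I - F)^{-1}\,d\lambda$, with $\Gamma$ a small circle separating $\alpha$ from $\sigma(F)\setminus\{\alpha\}$, is a well-defined idempotent that lies in $\mathcal{A}$, because $\mathcal{A}$ is norm-closed and contains every resolvent of $F$. Being a Riesz projection, $P_0$ commutes with $F$, hence with $G$, so $H = \ker P_0 \,\tilde\oplus\, \operatorname{Im} P_0$ and $G$ has the block form $\left(\begin{smallmatrix} G_1 & 0 \\ 0 & G_4\end{smallmatrix}\right)$ with respect to this decomposition, where $\sigma(G_1) = (\sigma(F)\setminus\{\alpha\}) - \alpha$ does not contain $0$ and $\sigma(G_4) = \{0\}$. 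In particular $G_1$ is an isomorphism, so $G$ together with $P_0$ satisfies all the hypotheses of Proposition \ref{r12 p17}.

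Next I would argue by contradiction: suppose $P_0$ is \emph{not} finite, which (since $\operatorname{Im} P_0$ is closed) is the same as saying $P_{\operatorname{Im} P_0}$ is not a finite projection. Then Proposition \ref{r12 p17} yields $P_{\operatorname{Im} P_0}\, G\, P_{\operatorname{Im} P_0} \in \mathcal{K}\Phi(\mathcal{B})$, where $\mathcal{B} := P_{\operatorname{Im} P_0}\mathcal{A} P_{\operatorname{Im} P_0}$ is a properly infinite von Neumann algebra with unit $P_{\operatorname{Im} P_0}$. The key identification is that, because $G$ leaves $\operatorname{Im} P_0$ invariant, the compression $P_{\operatorname{Im} P_0} G P_{\operatorname{Im} P_0}$, viewed as an element of $\mathcal{B}$ acting on the Hilbert space $\operatorname{Im} P_0$, coincides with $G_4 = G|_{\operatorname{Im} P_0}$; indeed for $x \in \operatorname{Im} P_0$ one has $Gx \in \operatorname{Im} P_0$, so $P_{\operatorname{Im} P_0} G P_{\operatorname{Im} P_0} x = Gx = G_4 x$. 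By spectral permanence for the unital $C^{*}$-subalgebra $\mathcal{B}$, the spectrum of this element in $\mathcal{B}$ therefore equals $\sigma(G_4) = \{0\}$.

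Finally I would pass to the generalized Calkin algebra $\mathcal{B}/\mathfrak{m}_{\mathcal{B}}$, where $\mathfrak{m}_{\mathcal{B}}$ is the ideal of finite-type elements of $\mathcal{B}$. Using the relation (\ref{f1}) in $\mathcal{B}$ together with the Atkinson-type characterization of $\mathcal{B}$-Fredholmness coming from Lemma \ref{r12 l11} (an element of $\mathcal{K}\Phi(\mathcal{B})$ being invertible up to a pair of finite projections has invertible image modulo $\mathfrak{m}_{\mathcal{B}}$), the image $\pi(G_4)$ is invertible in $\mathcal{B}/\mathfrak{m}_{\mathcal{B}}$. On the other hand $\sigma_{\mathcal{B}/\mathfrak{m}_{\mathcal{B}}}(\pi(G_4)) \subseteq \sigma_{\mathcal{B}}(G_4) = \{0\}$, while invertibility forces $0 \notin \sigma(\pi(G_4))$, so $\pi(G_4)$ would have empty spectrum. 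Since no element of a nonzero unital Banach algebra has empty spectrum, this forces $\mathcal{B}/\mathfrak{m}_{\mathcal{B}} = \{0\}$, i.e.\ $P_{\operatorname{Im} P_0} \in \mathfrak{m}_{\mathcal{B}}$, so $P_{\operatorname{Im} P_0}$ is finite, contradicting our assumption. Hence $P_0$ must be finite. I expect the main obstacle to be the middle step: rigorously confirming that $P_0$ lands in $\mathcal{A}$ and that the compression genuinely reduces to the quasinilpotent $G_4$, and then making the Atkinson correspondence between membership in $\mathcal{K}\Phi(\mathcal{B})$ and invertibility modulo $\mathfrak{m}_{\mathcal{B}}$ precise in the corner algebra $\mathcal{B}$.
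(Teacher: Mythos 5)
Your proposal is correct and follows essentially the same route as the paper: both reduce the statement to Proposition \ref{r12 p17} via the Riesz decomposition $H=\ker P_{0}\,\tilde\oplus\,Im\, P_{0}$ and then derive a contradiction from the fact that the quasinilpotent restriction of $F-\alpha I$ to $Im\, P_{0}$ cannot lie in $\mathcal{K}\Phi(P_{Im P_{0}}\mathcal{A}P_{Im P_{0}})$ unless that corner algebra is degenerate, forcing $P_{Im P_{0}}$ to be finite. The only cosmetic difference is that you make the final step explicit via the empty-spectrum argument in the quotient $\mathcal{B}/\mathfrak{m}_{\mathcal{B}}$, whereas the paper phrases it as emptiness of the essential spectrum of the full pencil and invokes the analogous argument from \cite{ZZRD} by reference.
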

\begin{proof}
	Note that $ \sigma (F) $ in $ \mathcal{A} $ is the same as the spectrum of $F$ in $B(H)$ since $ \mathcal{A} $ is a von Neumann algebra. By \cite[Section 3]{ZZRD} it follows that $F - \alpha I$ satisfies the conditions of Proposition \ref{r12 p17} with respect to the decomposition $ \ker P_{0} \tilde \oplus Im P_{0} = H .$ Moreover, $ F - \lambda I $ maps $ Im P_{0} $ isomorphically onto $ Im P_{0} $ for all $ \lambda \neq \alpha .$ If $ P_{ Im P_{0} } $ is not a finite projection, by Proposition \ref{r12 p17} we have that $ P_{ Im P_{0} } (F - \lambda I) P_{ Im P_{0} } \in \mathcal{K} \Phi (P_{ Im P_{0 } } \mathcal{A} P_{ Im P_{0 } }) $ for all $ \lambda \in \mathbb{C} .$ By the similar arguments as in the proof of \cite[Corollary 2.8]{ZZRD}  we can deduce that $ P_{ Im P_{0 } } \mathcal{A} P_{ Im P_{0 } } $ consists only of finite operators, so $ P_{ Im P_{0 } } $ is a finite operator, which contradicts the assumption in the beginning of this proof that $  P_{ Im P_{0 } } $ is not finite.   
\end{proof} 
\begin{remark} \label{rem101}
	We notice that by the arguments from the proof of Proposition \ref{r12 p17} it also follows that if  $N , M$ are closed subspaces of $H$ such that $P_{N} , P_{M} \in \mathcal{A} ,$ then $ P_{ M \cap N} \in \mathcal{A} .$ 
\end{remark}

The theory regarding isolated points of the spectrum of Fredholm operators on Hilbert and Banach spaces is closely connected to the concept of Browder operators, as illustrated in \cite[Theorem 3.1]{ZZRD}.  Motivated by \cite[Definition 5.7]{IS13} we give now the following definition of generalized $\mathcal{A}$-Browder operators. 

\begin{definition}
	Let $F\in\mathcal{A}. $ We say that $ F $ is generalized $\mathcal{A}$-Browder if there exists a decomposition 
	$$ H = M \tilde \oplus N \stackrel{F}{\longrightarrow} M \tilde \oplus N= H  $$ 
	with respect to which $ F $ has the matrix 
	$\begin{pmatrix}
		F_{1}  & 0 \\
		0 & F_{4}
	\end{pmatrix} ,$ 
	where $ F_{1} $ is an isomorphism and such that $ P_{N} \in Proj_{0} (\mathcal{A}).$ 
\end{definition}

We have the following lemma.

\begin{lemma}\label{Browder}
	Let $F\in\mathcal{A}. $ If $ F $ is  generalized $\mathcal{A}$-Browder, then $ F $ is  $\mathcal{A}$-Fredholm.
	
\end{lemma}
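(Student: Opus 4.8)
The plan is to show that the defining decomposition of a generalized $\mathcal{A}$-Browder operator directly yields, after a small adjustment, the invertibility-up-to-a-pair-of-finite-projections that characterizes $\mathcal{A}$-Fredholmness via Lemma \ref{r12 l11}. Recall that $F$ has matrix $\begin{pmatrix} F_{1} & 0 \\ 0 & F_{4} \end{pmatrix}$ with respect to the (possibly skew) decomposition $H = M \tilde\oplus N \to M \tilde\oplus N$, where $F_{1}$ is an isomorphism of $M$ and $P_{N} \in Proj_{0}(\mathcal{A})$. By Lemma \ref{r12 l13} applied to $P_{N}$, finiteness of $P_{N}$ also gives finiteness of the orthogonal projection onto the ``cokernel side''; more importantly, since $F_{1}$ is an isomorphism on the $M$-summand, the operator $F$ should be invertible up to a pair of projections built from $P_{N}$.

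First I would pass from the skew decomposition to the orthogonal one. The decomposition $H = M \tilde\oplus N$ with $P_{M}, P_{N} \in \mathcal{A}$ means $P_{M}, P_{N}$ are projections (not necessarily orthogonal, but lying in $\mathcal{A}$), and by Corollary \ref{projleft} the complementary skew projections are available inside $\mathcal{A}$. The key structural input is that $F$ maps $M$ isomorphically onto $M$ via $F_{1}$. I would then invoke Corollary \ref{isom}: with $N_{0} := M$ playing the role of the ``good'' subspace, the hypothesis that $F_{1}$ is an isomorphism gives exactly that $F$ is invertible up to the pair $(I - P_{M}, I - P_{M})$ in $\mathcal{A}$, where $I - P_{M}$ is the skew projection onto $N$ along $M$. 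The point is that Corollary \ref{isom} was stated for orthogonal decompositions, so a small amount of care is needed; I would either orthogonalize using Remark \ref{rem101}-type arguments (the relevant intersection and complement projections stay in $\mathcal{A}$) or observe that the isomorphism hypothesis is preserved under the bounded change of coordinates relating the skew and orthogonal frames.

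Once $F$ is invertible up to $(I - P_{M}, I - P_{M})$, the pair of ``error'' projections is $(I - P_{M}, I - P_{M})$, and $I - P_{M}$ is the skew projection onto $N$. Since $P_{N} \in Proj_{0}(\mathcal{A})$ and the skew projection onto $N$ along $M$ has the same range $N$, its range projection is Murray--von Neumann equivalent to $P_{N}$, hence finite; concretely the orthogonal projection $P_{N}$ and the skew idempotent onto $N$ have equivalent ranges, so by (\ref{f1}) the error projections lie in $\mathfrak{m}$, i.e.\ are finite. Therefore $F$ is invertible up to a pair $(P, Q)$ with $P, Q \in Proj_{0}(\mathcal{A})$, and Lemma \ref{r12 l11} gives that $F$ is $\mathcal{A}$-Fredholm.

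The main obstacle I anticipate is the bookkeeping around skew versus orthogonal projections: Corollary \ref{isom} and Lemma \ref{r12 l14} are phrased for orthogonal decompositions $H = N \oplus N^{\perp}$, whereas the Browder definition allows a skew sum $M \tilde\oplus N$. I expect to resolve this by replacing the skew decomposition with the orthogonal one $H = M \oplus M^{\perp}$: the hypothesis ``$F_{1}$ is an isomorphism'' says exactly that $P_{M} F P_{M}$ restricted to $M$ is invertible, which is a coordinate-free statement, so composing $F$ with the bounded invertible operator implementing the change of frame (which lies in $\mathcal{A}$ because both $P_{M}$ and $P_{N}$ do) lets me apply Corollary \ref{isom} in its orthogonal form. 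The finiteness transfer from $P_{N}$ to the orthogonal cokernel projection is then handled by Lemma \ref{r12 l13}, and the rest is immediate from Lemma \ref{r12 l11}.
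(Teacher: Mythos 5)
There is a genuine gap. Your whole argument hinges on the assertion that $P_{M}$ (or the skew idempotent onto $N$ along $M$) lies in $\mathcal{A}$: you invoke it to apply Corollary \ref{isom} with the pair $(I-P_{M},I-P_{M})$, and again when you claim that ``the bounded invertible operator implementing the change of frame \dots lies in $\mathcal{A}$ because both $P_{M}$ and $P_{N}$ do.'' But the definition of a generalized $\mathcal{A}$-Browder operator only requires $P_{N}\in Proj_{0}(\mathcal{A})$; nothing is assumed about $M$ beyond it being a closed $F$-invariant complement of $N$ on which $F$ is an isomorphism. In general $P_{M}\notin\mathcal{A}$: already for $F=I$ every closed complement $M$ of $N$ gives a Browder decomposition, and there is no reason for the orthogonal projection onto an arbitrary such $M$ to belong to $\mathcal{A}$. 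Also note that the framework of ``invertible up to $(p,q)$'' (Definition \ref{d 11}) and Lemma \ref{r12 l11} is phrased for self-adjoint projections in $\mathcal{A}$, so even the skew idempotent onto $N$ along $M$ cannot be fed into it directly. This is not mere bookkeeping; it is the point where the argument as written breaks down.

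The paper's proof avoids the issue by discarding $M$ altogether and replacing it with $N^{\perp}$: using the structure of the Browder decomposition (via the proof of \cite[Lemma 2.5]{IS10}) one shows that $F$ maps $N^{\perp}$ isomorphically onto the closed subspace $F(N^{\perp})$ and that $H=F(N^{\perp})\,\tilde{\oplus}\,N$. Now every projection in sight does lie in $\mathcal{A}$: $I-P_{N}\in\mathcal{A}$ by hypothesis, and $P_{F(N^{\perp})}=P_{\overline{Im\,F(I-P_{N})}}\in\mathcal{A}$ because $F$ and $P_{N}$ are. Corollary \ref{isom} then gives invertibility up to $\bigl(P_{N},\,I-P_{F(N^{\perp})}\bigr)$, and the decomposition $H=F(N^{\perp})\,\tilde{\oplus}\,N$ shows that $I-P_{F(N^{\perp})}$ maps $N$ isomorphically onto $F(N^{\perp})^{\perp}$, whence $P_{F(N^{\perp})^{\perp}}\sim P_{N}$ is finite, and Lemma \ref{r12 l11} applies. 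If you want to salvage your write-up, the fix is exactly this substitution of $N^{\perp}$ for $M$ on the domain side and of $F(N^{\perp})$ for $M$ on the codomain side; the rest of your outline (equivalence of the error projections to $P_{N}$, then Lemma \ref{r12 l11}) then goes through.
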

\begin{proof}
	Let 
	$$ H = M \tilde \oplus N \stackrel{F}{\longrightarrow} M \tilde \oplus N= H  $$ 
	be an $\mathcal{A}$-Browder decomposition for $ F .$ By the proof of \cite[Lemma 2.5]{IS10}, $ F $ has the matrix
	$\begin{pmatrix}
		F_{1}  & 0 \\
		0 & F_{4}
	\end{pmatrix} $ 
	with respect to the decomposition
	$$ H =N^{\perp}  \oplus N  \stackrel{F}{\longrightarrow} F(N^{\perp}) \tilde \oplus N= H , $$ 
	where $ F_{1} $ is an isomorphism. Hence, $F(N^{\perp}) $ is closed and $ F $ has he matrix 
	$\begin{pmatrix}
		F_{1}  & \tilde{F_{2}} \\
		0 &\tilde{F_{4}}
	\end{pmatrix} ,$
	with respect to the decomposition 
	$$ H =N^{\perp} \oplus N  \stackrel{F}{\longrightarrow} F(N^{\perp}) \tilde \oplus F(N^{\perp})^{\perp} = H  .$$  
	Now, $F(N^{\perp}) = Im F(I-P_{N}) ,$ so $P_{F(N^{\perp})} \in \mathcal{A} $ since $ P_{N}\in\mathcal{A}. $ By Corollary \ref{isom} we deduce that $ F $ is invertible up to $(
	P_{N}, I-P_{F(N^{\perp})} )$ in $\mathcal{A}.$ Since $H= F(N^{\perp}) \tilde{ \oplus } N $, it follows that $I-P_{F(N^{\perp})} $ maps $ N $ isomorphically onto $ F(N^{\perp})^{\perp}. $ Thus we get that $ F(N^{\perp})^{\perp} = Im (I-P_{F(N^{\perp})})P_{N} $ and $N^{\perp}= \ker ((I-P_{F(N^{\perp})})P_{N}) ,$ which gives that $ P_{N} \sim  P_{F(N^{\perp})^{\perp}}.$ Hence $ I-P_{F(N^{\perp})} \in Proj_{0} (\mathcal{A}) $ because $ P_{N} \in Proj_{0} (\mathcal{A}) ,$ so $ F $ is $\mathcal{A}$-Fredholm by Lemma \ref{r12 l11}.
	\end{proof}

The next corollary is motivated by \cite[Theorem 3.1]{ZZRD}.    
\begin{corollary}
	Let $F\in\mathcal{A}$ and suppose that $ 0 $ is an isolated point of $ \sigma (F). $ Then $ F $ is $\mathcal{A}$-Fredholm if and only if $ F $ is generalized $\mathcal{A}$-Browder.
\end{corollary}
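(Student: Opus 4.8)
The plan is to establish the two implications separately. The implication ``generalized $\mathcal{A}$-Browder $\Rightarrow$ $\mathcal{A}$-Fredholm'' requires no new work: it is exactly the content of Lemma \ref{Browder}, and in fact it does not even use the hypothesis that $0$ is isolated in $\sigma(F)$. So the entire burden falls on the converse.

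For the converse, assume $F$ is $\mathcal{A}$-Fredholm with $0$ an isolated point of $\sigma(F)$. As recorded in the proof of Corollary \ref{spectralprojection}, the spectrum of $F$ relative to $\mathcal{A}$ coincides with its spectrum in $B(H)$, so $0$ is isolated in this common spectrum. I would then form the Riesz spectral projection $P_{0}=\frac{1}{2\pi i}\oint_{\Gamma}(zI-F)^{-1}\,dz$, integrating over a small positively oriented circle $\Gamma$ enclosing $0$ and excluding the rest of $\sigma(F)$. Since $\mathcal{A}$ is a von Neumann algebra it contains $(zI-F)^{-1}$ for every $z\notin\sigma(F)$ and is closed under the holomorphic functional calculus, so $P_{0}\in\mathcal{A}$; moreover $P_{0}$ commutes with $F$ and furnishes the direct sum decomposition $H=\ker P_{0}\,\tilde\oplus\,Im P_{0}$.

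Put $M=\ker P_{0}$ and $N=Im P_{0}$. The commutation $FP_{0}=P_{0}F$ yields $F(M)\subseteq M$ and $F(N)\subseteq N$, so $F$ has the block diagonal matrix $\begin{pmatrix}F_{1}&0\\0&F_{4}\end{pmatrix}$ relative to $M\,\tilde\oplus\,N$. It remains only to verify the two defining requirements. First, by standard Riesz theory $\sigma(F|_{M})=\sigma(F)\setminus\{0\}$ does not contain $0$, so $F_{1}=F|_{M}$ is an isomorphism of $M$. Second, I would apply Corollary \ref{spectralprojection} with $\alpha=0$: since $F=F-0\cdot I$ is $\mathcal{A}$-Fredholm and $0$ is isolated, $P_{0}$ is a finite operator, which in the sense used there means precisely that the orthogonal projection $P_{N}=P_{Im P_{0}}$ lies in $Proj_{0}(\mathcal{A})$; note that $N$ is closed, being the range of the idempotent $P_{0}$, so $P_{N}=P_{\overline{Im P_{0}}}\in\mathcal{A}$. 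Thus $H=M\,\tilde\oplus\,N$ is a generalized $\mathcal{A}$-Browder decomposition of $F$, which completes the argument.

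The only delicate point is the projection bookkeeping. One must keep track of two different objects attached to $Im P_{0}$ --- the skew idempotent $P_{0}$ delivered by the functional calculus and the orthogonal projection $P_{N}$ onto its range --- and confirm that both belong to $\mathcal{A}$, and one must read Corollary \ref{spectralprojection} as exactly the bridge converting ``finiteness of the spectral projection'' into the membership $P_{N}\in Proj_{0}(\mathcal{A})$ demanded by the definition. Once this identification is in place the proof is immediate, since the analytic substance --- the existence of the Riesz decomposition and the finiteness of the spectral projection --- has already been supplied by Corollary \ref{spectralprojection} together with Lemma \ref{Browder}.
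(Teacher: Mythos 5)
Your proposal is correct and follows essentially the same route as the paper: the forward direction is Lemma \ref{Browder}, and the converse is the Riesz decomposition $H=\ker P_{0}\,\tilde\oplus\,Im P_{0}$ combined with Corollary \ref{spectralprojection} to get finiteness of the spectral projection. The paper's own proof is just a two-line citation of these same two results, so your version simply makes explicit the bookkeeping (skew idempotent versus orthogonal projection onto its range) that the paper leaves implicit.
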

\begin{proof}
	The implication in one direction follows from Corollary \ref{spectralprojection}, whereas the implication in the other direction follows from Lemma \ref{Browder}.
\end{proof}

We shall devote the rest of this section to generalizations in the setting of operators in von Neumann algebras of the results by Zemanek given in \cite{ZE} concerning the relationship between the spectra of an operator and of its compressions. To this end, we give first the following auxiliary technical lemma.
	\begin{lemma}  \label{r10l 1.8}
	Let $p \in \mathcal{A} $ be a projection and $a \in \mathcal{A} .$ Then $a$ is invertible up to $(p,p)$ if and only if $a$ is invertible up to pair $(p, q)$ where $q$ is projection satisfying that $qa(1-p)=0 $ and that $1-p $ is invertible up to pair $(q, p).$ 
\end{lemma}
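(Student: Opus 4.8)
The plan is to convert both "invertibility up to a pair" hypotheses into the geometric language of Corollary \ref{isom} and Corollary \ref{projleft}, working throughout with the orthogonal decomposition $H = \ker p \oplus \operatorname{Im} p$. By Corollary \ref{isom} applied with $N = \ker p$ (so $P_N = 1-p$), the assertion that $a$ is invertible up to $(p,p)$ is equivalent to saying that the compression $a_1 := (1-p)a(1-p)$ is an isomorphism of $\ker p$; and by Corollary \ref{projleft} the requirement that $1-p$ be invertible up to $(q,p)$ is equivalent to the direct-sum decomposition $H = \ker q \,\tilde\oplus\, \operatorname{Im} p$. These translations turn the lemma into a statement purely about ranges and complements.

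For the forward implication, assume $a_1$ is invertible on $\ker p$. First I would note that $a(1-p)|_{\ker p}$ is bounded below: for $\xi \in \ker p$ one has $a\xi = a_1\xi + p a(1-p)\xi$ with $a_1\xi \in \ker p$ and $p a(1-p)\xi \in \operatorname{Im} p$ orthogonal, whence $\|a(1-p)\xi\|^2 \ge \|a_1\xi\|^2 \ge c\|\xi\|^2$. Hence $a(\ker p) = \operatorname{Im} a(1-p)$ is closed, and since $a(1-p) \in \mathcal{A}$ its range projection lies in $\mathcal{A}$; I then set $q := I - P_{a(\ker p)} \in \mathcal{A}$, so that $\ker q = a(\ker p)$ and $qa(1-p) = 0$ holds by construction. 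With this choice the $(1,1)$-entry of $a$ relative to $\ker p \oplus \operatorname{Im} p \to \ker q \oplus \operatorname{Im} q$ equals $(1-q)a(1-p)|_{\ker p} = a(1-p)|_{\ker p}$, which is an isomorphism of $\ker p$ onto $\ker q$, so Corollary \ref{isom} yields that $a$ is invertible up to $(p,q)$. Finally, substituting $\eta = a_1\xi$ in $a\xi = a_1\xi + p a(1-p)\xi$ exhibits $\ker q = a(\ker p)$ as the graph of the bounded operator $T := p a(1-p)a_1^{-1}\colon \ker p \to \operatorname{Im} p$; since the graph of any bounded operator $\ker p \to \operatorname{Im} p$ is an algebraic complement of $\operatorname{Im} p$, one obtains $H = \ker q \,\tilde\oplus\, \operatorname{Im} p$, which by Corollary \ref{projleft} says exactly that $1-p$ is invertible up to $(q,p)$.

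For the converse, assume $a$ is invertible up to $(p,q)$ with $qa(1-p)=0$ and $1-p$ invertible up to $(q,p)$. Because $qa(1-p)=0$, the $(1,1)$-entry appearing in Corollary \ref{isom} is again $a|_{\ker p}$, so $a$ maps $\ker p$ isomorphically onto $\ker q$; and applying Corollary \ref{isom} to the element $D = 1-p$ shows that $(1-p)|_{\ker q}$ is an isomorphism of $\ker q$ onto $\ker p$. Then $(1-p)a(1-p)|_{\ker p} = (1-p)|_{\ker q}\circ a|_{\ker p}$ is a composition of two isomorphisms, hence invertible on $\ker p$, i.e. $a$ is invertible up to $(p,p)$.

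The step I expect to be the main obstacle is the forward direction, specifically making the choice of $q$ fully rigorous: one must simultaneously secure that $q \in \mathcal{A}$ (via the standard fact that range projections of elements of $\mathcal{A}$ lie in $\mathcal{A}$, which itself requires $a(\ker p)$ to be closed), that the $(1,1)$-entry is genuinely \emph{onto} $\ker q$ and not merely bounded below, and that the graph description delivers the \emph{algebraic} direct sum $H = \ker q \,\tilde\oplus\, \operatorname{Im} p$ demanded by Corollary \ref{projleft}, rather than an orthogonal one. One could instead invoke \cite[Proposition 2.8]{KL} to replace an arbitrary $q'$ by an equivalent $q$ with $qa(1-p)=0$, but identifying $\ker q$ explicitly with $a(\ker p)$ is what makes the complementation condition transparent.
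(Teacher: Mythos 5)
Your argument is correct, but it follows a genuinely different route from the paper's. The paper stays entirely inside the algebraic almost-inverse calculus: for the forward direction it invokes \cite[Proposition 2.8]{KL} to produce a projection $q$ with $qa(1-p)=0$ (and $q\sim p$) such that $a$ is invertible up to $(p,q)$, and then verifies by direct computation that $a(1-p)b'$ is a $(q,p)$-inverse of $1-p$, where $b'$ is the given $(p,p)$-inverse of $a$; for the converse it checks in two lines that $bc$ is a $(p,p)$-inverse of $a$, where $b$ and $c$ are the given $(p,q)$- and $(q,p)$-inverses. You instead translate both hypotheses into the spatial picture via Corollaries \ref{isom} and \ref{projleft}, build $q$ concretely as $I-P_{a(\ker p)}$ (your bounded-below estimate, needed to see that $a(\ker p)$ is closed so that its range projection lies in $\mathcal{A}$, is the right way to justify this), and obtain the transversality $H=\ker q\,\tilde\oplus\, p(H)$ from the graph description of $a(\ker p)$; the converse then becomes a composition of two isomorphisms $\ker p\to\ker q\to\ker p$. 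Both proofs are complete. The paper's version is representation-free, so it survives verbatim in the abstract unital $C^{*}$-algebra framework of Section 2 and hands you the almost-inverses explicitly; it also records the extra conclusion $q\sim p$ coming from \cite[Proposition 2.8]{KL}, which your construction does not directly give (it yields $1-q\sim 1-p$ instead). Your version, on the other hand, identifies $\ker q$ geometrically as $a(\ker p)$ and makes it transparent why the condition that $1-p$ be invertible up to $(q,p)$ is exactly the complementation $H=\ker q\,\tilde\oplus\, p(H)$, which is the point of view the paper itself exploits in the subsequent corollaries on compressions.
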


\begin{proof}
	Suppose that $a$ is invertible up to $(p, p)$ and let $b^{\prime} $ be $(p, p)$ inverse of $a.$ By \cite[Proposition 2.8]{KL} there exists some projection $q \in \mathcal{A} $ such that $a$ is invertible up to $(p, q),$ $ qa(1-p)=0$ and $q \sim p .$ Then we have 
	$$(1-p)(1-q)a(1-p)b^{\prime} = (1-p) a (1-p) b^{\prime} = (1-p)$$  
	and 
	$$ a(1-p)b^{\prime}(1-p)(1-q) = a (1-p) b^{\prime} (1-p) a (1-p) b $$ 
	where $ b$ is $(p, q)$ inverse of $a.$ However, we have 
	$$a (1-p) b^{\prime} (1-p) a (1-p) b = a (1-p) b = 1 -q ,$$ 
	so $a (1-p) b^{\prime} $ is $(q, p)$ inverse of $1-p ,$ which proves the implication in one direction. Assume now that $a$ is invertible up to some  pair $(p, q)$ where $qa(1-p)=0$ and $1-p $ is invertible up to pair $(q, p).$ Let $b $ be $(p, q)$ inverse of $a$ and $c $ be $(q, p)$ inverse of $1-p .$ Then we get that 
	$$bc (1-p) a (1-p) = bc (1-p) (1-q) a (1-p) = b (1-q) a (1-p) = 1-p $$ 
	and 
	$$(1-p) a (1-p) b c = (1-p) (1-q) a (1-p) b c = (1-p) (1-q) c = 1-p ,$$ 
	so $bc $ is $(p, p)$ inverse of $a,$ which proves the opposite implication.
\end{proof}
From the proof of Lemma \ref{r10l 1.8} we can deduce the following corollary. 

\begin{corollary} \label{r10c 1.12}
	Let $a \in \mathcal{A} $ and $p, q$ be projections in $ \mathcal{A} .$ Then $a$ is invertible up to $(p, q)$ if and only if there exists some projection $q^{\prime} $ in $\mathcal{A} $ such that $a$ is invertible up to $(p,q^{\prime}) ,$ $ q^{\prime}a(1-p)=0$  and $(1-q) $ is invertible up to $(q^{\prime}, q) .$ Similarly, $a$ is invertible up to $(p, q)$ if and only if there exists some projection $p^{\prime} $ in $\mathcal{A} $ such that $a$ is invertible up to $(p^{\prime}, q), $ $(1-q)ap^{\prime} = 0$ and $(1-p) $ is invertible up to $ (p^{\prime}, p).$
\end{corollary}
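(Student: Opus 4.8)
The plan is to extract the statement directly from the proof of Lemma \ref{r10l 1.8}, reading the first half of that proof as a template. The corollary is essentially an ``asymmetric'' version of the lemma: rather than demanding $q=p$ at the end, we allow an arbitrary target projection $q$ and introduce an auxiliary projection $q'$ (playing the role that the original lemma's $q$ played relative to $p$). So the first thing I would do is apply \cite[Proposition 2.8]{KL} to the pair $(p,q)$: given that $a$ is invertible up to $(p,q)$, this produces a projection $q'\sim q$ with $q'a(1-p)=0$ and $a$ invertible up to $(p,q')$. This handles the two of the three desired conditions immediately, and the remaining task is to establish that $(1-q)$ is invertible up to $(q',q)$.

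For the forward direction, I would then mimic the computation from Lemma \ref{r10l 1.8}. Let $b'$ be a $(p,q')$-inverse of $a$ and $b$ a $(p,q)$-inverse of $a$. The lemma's proof showed, for the symmetric case $q=p$, that $a(1-p)b'$ serves as a $(q,p)$-inverse of $1-p$; here the analogue should be that an expression built from $b'$ and $b$ witnesses the invertibility of $(1-q)$ up to $(q',q)$. Concretely I expect to check that $a(1-p)b'$ (or a closely related product) satisfies the two defining identities
$$
(1-q)(1-q')\,[\,\cdot\,]\,(1-q) = 1-q,
\qquad
(1-q)\,[\,\cdot\,]\,(1-q')(1-q) = 1-q',
$$
by repeatedly using $q'a(1-p)=0$, the relations $ab=1-q$, $a(1-p)b'=1-q'$ on the appropriate corners, and $b=(1-p)b(1-q)$. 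These are the same kinds of ``telescoping'' cancellations as in the lemma, so they should go through routinely once the correct product is identified.

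For the converse, I would assume the existence of $q'$ with the three stated properties and reconstruct a $(p,q)$-inverse of $a$ by composing the $(p,q')$-inverse $b$ of $a$ with the $(q',q)$-inverse $c$ of $(1-q)$, exactly as the lemma formed $bc$. The verification that $bc$ (or the appropriate corner-truncated version) is a genuine $(p,q)$-inverse of $a$ should again be a direct substitution, leaning on $q'a(1-p)=0$ and the factorization identities for $b$ and $c$. The second half of the statement, concerning $p'$ instead of $q'$, follows by the symmetric argument: apply Lemma \ref{l 01} to pass to the adjoint $a^{*}$, which is invertible up to $(q,p)$, run the first half to produce the auxiliary projection on that side, and transport back. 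The main obstacle I anticipate is bookkeeping rather than conceptual: correctly identifying which product of $b,b',c$ gives each required inverse and keeping track of which complementary projections ($1-p$, $1-q$, $1-q'$) multiply on the left versus the right, since the asymmetry $p\neq q$ removes the symmetry that simplified the original lemma's computation.
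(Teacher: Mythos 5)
Your proposal is correct and follows exactly the route the paper takes: the paper's own proof simply says the first statement follows from the proof of Lemma \ref{r10l 1.8} (i.e.\ apply \cite[Proposition 2.8]{KL} to produce $q'$ and rerun the same telescoping identities with the asymmetric pair) and the second follows by passing to adjoints via Lemma \ref{l 01}. One small bookkeeping point: the candidate $(q',q)$-inverse of $1-q$ should be $a(1-p)$ composed with the \emph{given} $(p,q)$-inverse of $a$ (in your labelling that is $a(1-p)b$, not $a(1-p)b'$, since $a(1-p)b'$ collapses to $1-q'$), after which both verifications go through exactly as you describe.
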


\begin{proof}
	The first statement follows from the proof of Lemma \ref{r10l 1.8} whereas the second statement follows from the first statement by passing to the adjoints and applying Lemma \ref{l 01}.
\end{proof}

Let now $\tilde{ \mathcal{K}\Phi}_{0}(\mathcal{A}) $ be the set of all $T \in \mathcal{A}$ for which there exist finite projections $P,Q \in \mathcal{A}$ such that $T$ maps $ (I-P)(H)$ isomorphically onto $(I-Q)(H)$ and such that $H=(I-Q) (H) \tilde{\oplus} P(H).$ For $F \in \mathcal{A}$ set 
$$\sigma_{eW}(F):= \lbrace \lambda \in \mathbb{C} \mid F-\lambda I \notin \tilde{ \mathcal{K}\Phi}_{0}(\mathcal{A}) \rbrace . $$
Then we have the following corollary.
\begin{corollary}
	The set $\tilde{ \mathcal{K}\Phi}_{0}(\mathcal{A})  $ is open in $\mathcal{A}$ and $\tilde{ \mathcal{K}\Phi}_{0}(\mathcal{A}) \subseteq \mathcal{K}\Phi_{0}(\mathcal{A}).$ Moreover, for $F \in \mathcal{A}$ we have 
	$$\sigma_{eW}(F) = \cap \lbrace \sigma (PF_{\mid_{ImP}}) \mid P \text{ is a finite projection in } \mathcal{A} \rbrace.$$
\end{corollary}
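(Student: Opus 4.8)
The plan is to establish the corollary in three distinct parts, corresponding to its three assertions. First I would prove that $\tilde{\mathcal{K}\Phi}_{0}(\mathcal{A})$ is open in $\mathcal{A}$. The defining property of $T \in \tilde{\mathcal{K}\Phi}_{0}(\mathcal{A})$ is the existence of finite projections $P, Q$ with $T$ mapping $(I-P)(H)$ isomorphically onto $(I-Q)(H)$ together with the direct-sum decomposition $H = (I-Q)(H) \tilde{\oplus} P(H)$. By Corollary \ref{projleft}, this last condition is exactly the invertibility of $I - P$ up to $(Q, P)$, and by Corollary \ref{isom} the isomorphism condition on $T$ is equivalent to $T$ being invertible up to $(P, Q)$ in $\mathcal{A}$. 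Thus $T \in \tilde{\mathcal{K}\Phi}_{0}(\mathcal{A})$ can be recast entirely as a pair of ``invertible up to $(P,Q)$'' statements with finite $P, Q$. Openness then follows from a standard perturbation argument: if $T$ is invertible up to $(P, Q)$ via an almost inverse $b$, then for $\|T' - T\|$ small enough the element $T'$ remains invertible up to the same pair $(P, Q)$ because the relevant compressions stay invertible under small perturbation, and the decomposition condition on $P, Q$ is unchanged. Hence a whole norm-ball around $T$ lies in $\tilde{\mathcal{K}\Phi}_{0}(\mathcal{A})$.

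Next I would show $\tilde{\mathcal{K}\Phi}_{0}(\mathcal{A}) \subseteq \mathcal{K}\Phi_{0}(\mathcal{A})$. Given $T \in \tilde{\mathcal{K}\Phi}_{0}(\mathcal{A})$ with witnessing finite projections $P, Q$, the reformulation above gives that $T$ is invertible up to $(P, Q)$ with both $P, Q \in Proj_{0}(\mathcal{A})$. To land in $\mathcal{K}\Phi_{0}(\mathcal{A})$ (the Weyl type elements of Definition \ref{r10d 1.3}) I need $P \sim Q$. The decomposition $H = (I-Q)(H) \tilde{\oplus} P(H)$ forces a dimension match: since $T$ maps $(I-P)(H)$ isomorphically onto $(I-Q)(H)$, we have $I - P \sim I - Q$ (via the partial isometry in the polar decomposition of the compression, which lies in $\mathcal{A}$ by the argument in Lemma \ref{r12 l14}). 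Because $\mathcal{A}$ is properly infinite while $P$ and $Q$ are finite, the relation $I - P \sim I - Q$ together with finiteness of $P, Q$ yields $P \sim Q$ by the comparison theory of projections and the additivity of Murray--von Neumann equivalence. This gives $T \in \mathcal{K}\Phi_{0}(\mathcal{A})$, i.e., $T$ is a Weyl type element.

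Finally, for the spectral identity, I would prove the two inclusions of $\sigma_{eW}(F) = \cap \{ \sigma(PF_{\mid_{Im P}}) \mid P \text{ finite projection} \}$ separately. Fix $\lambda \notin \sigma_{eW}(F)$, so $F - \lambda I \in \tilde{\mathcal{K}\Phi}_{0}(\mathcal{A})$ with some finite witnesses $P_{0}, Q_{0}$; I would exhibit a single finite projection $P$ for which $\lambda \notin \sigma(PF_{\mid_{Im P}})$, using the decomposition to realize the compression $PF_{\mid_{Im P}}$ as an honest invertible operator on the finite-dimensional-type corner $Im P$, so that $\lambda$ misses that corner's spectrum and hence misses the intersection. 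Conversely, if $\lambda$ lies outside the spectrum of some compression $PF_{\mid_{Im P}}$ with $P$ finite, I would reverse-engineer finite projections witnessing $F - \lambda I \in \tilde{\mathcal{K}\Phi}_{0}(\mathcal{A})$, invoking Corollary \ref{r10c 1.12} and Lemma \ref{r10l 1.8} to convert the invertibility of the compression into the required ``invertible up to a pair'' statement with a suitable decomposition of $H$. The main obstacle I anticipate is this spectral identity: matching the ad hoc finite projection $P$ appearing in a compression with the structured pair $(P_{0}, Q_{0})$ demanded by the definition of $\tilde{\mathcal{K}\Phi}_{0}(\mathcal{A})$ requires carefully producing the direct-sum decomposition $H = (I-Q)(H) \tilde{\oplus} P(H)$ from invertibility data, and it is precisely here that Corollary \ref{r10c 1.12} and the equivalence-of-projections bookkeeping must be deployed with care to avoid losing finiteness.
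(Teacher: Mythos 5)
There are two genuine gaps. First, your argument for $\tilde{\mathcal{K}\Phi}_{0}(\mathcal{A}) \subseteq \mathcal{K}\Phi_{0}(\mathcal{A})$ rests on the claim that $I-P \sim I-Q$ together with finiteness of $P,Q$ yields $P \sim Q$ ``because $\mathcal{A}$ is properly infinite.'' This is false, and proper infiniteness is precisely the obstruction rather than the enabler: in $B(H)$ or in a $II_{\infty}$ factor one has $I-P \sim I \sim I-Q$ for \emph{all} finite $P,Q$, so the relation $I-P \sim I-Q$ carries no information about $P$ versus $Q$; the cancellation you invoke holds in finite algebras, not properly infinite ones. The correct route --- the one the paper itself uses in the remark following the definition of $\tilde{\mathcal{K}\Phi}_{-}^{+}(\mathcal{A})$ --- is to read $P \sim Q$ directly off the decomposition $H=(I-Q)(H)\tilde\oplus P(H)$, which forces $Q$ to map $P(H)$ isomorphically onto $Q(H)$. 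Better still, the paper's intended proof runs everything through Lemma \ref{r10l 1.8} combined with Corollaries \ref{isom} and \ref{projleft}: these show that $T\in\tilde{\mathcal{K}\Phi}_{0}(\mathcal{A})$ if and only if $T$ is invertible up to $(P,P)$ for a \emph{single} finite projection $P$. From that identification the inclusion into $\mathcal{K}\Phi_{0}(\mathcal{A})$ is immediate (take $q=p$ in Definition \ref{r10d 1.3}), and openness follows from \cite[Lemma 2.4]{KL}, since invertibility up to a fixed pair is an open condition. Your openness argument is essentially this, though you should note that a perturbation $T'$ only retains invertibility of the compression up to $(P,Q)$; it need not map $(I-P)(H)$ \emph{onto} $(I-Q)(H)$, so one must re-adjust $Q$ via \cite[Proposition 2.8]{KL} and re-verify the direct-sum condition.

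Second, for the spectral identity your plan compresses to the wrong corner. You propose to realize $PF\vert_{Im P}$ as an invertible operator on ``the finite-dimensional-type corner $Im P$,'' but invertibility of the compression of $F-\lambda I$ to a finite corner neither implies nor is implied by $F-\lambda I\in\tilde{\mathcal{K}\Phi}_{0}(\mathcal{A})$: for the unilateral shift $S$ and $\lambda=0$ one has $0\in\sigma_{eW}(S)$, yet $0\notin\sigma(PS\vert_{Im P})=\lbrace 1/2\rbrace$ for $P$ the rank-one projection onto the span of $e_{1}+e_{2}$. The actual content of the identity, again via Lemma \ref{r10l 1.8} and Corollaries \ref{isom} and \ref{projleft}, is that $F-\lambda I\in\tilde{\mathcal{K}\Phi}_{0}(\mathcal{A})$ exactly when $(I-P)(F-\lambda I)(I-P)$ is invertible in the corner algebra $(I-P)\mathcal{A}(I-P)$ for some finite $P$; that is, the relevant compressions are to the cofinite subspaces $Im(I-P)$, as in Zemanek's theorem where one compresses to subspaces of finite codimension, and the displayed formula must be read accordingly. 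As framed, your two-inclusion argument for the finite corner cannot be completed.
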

\begin{proof}
	The statements follow from Lemma \ref{r10l 1.8} , Corollary \ref{isom} ,Corollary \ref{projleft} and \cite[Lemma 2.4]{KL} .
\end{proof}

As a consequence of Corollary \ref{r10c 1.12} we obtain also the next two lemmas which characterize left and right invertibility in terms of upper triangular decompositions. 

\begin{lemma} \label{r10l 1.10}
	Let $p \in \mathcal{A} $ be a projection and $ a \in \mathcal{A} .$ Then $a$ is left invertible up to $(p, p)$ if and only if there exist projections $q, s$ in $\mathcal{A} $ such that $a$ is invertible up to pair $(p,q) , \text{ } qa(1-p)=0, \text{ } p \leq s$ and $1-s $ is invertible up to pair $(q, s).$
\end{lemma}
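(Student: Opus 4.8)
The plan is to follow the pattern of the proof of Lemma~\ref{r10l 1.8}, but first to read the phrase "left invertible up to $(p,p)$" through its meaning in the reduced von Neumann algebra $(1-p)\mathcal{A}(1-p)$. Unwinding the definition, $a$ is left invertible up to $(p,p)$ exactly when there is some $b=(1-p)b(1-p)$ with $b(1-p)a(1-p)=1-p$; that is, the compression $a_{0}:=(1-p)a(1-p)$ has a left inverse in the corner $(1-p)\mathcal{A}(1-p)$, which by Corollary~\ref{isom} (taking $N=M=(1-p)(H)$) is the same as saying that $a_{0}$ is bounded below on $(1-p)(H)$. I would keep this reformulation in hand throughout, since it turns each implication into a concrete statement about the single operator $a(1-p)$.

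For the implication from right to left I would argue purely algebraically, imitating the identity computation in Lemma~\ref{r10l 1.8}. Given $q,s$ as in the statement, let $b$ be a $(p,q)$-inverse of $a$ and $c$ a $(q,s)$-inverse of $1-s$. The hypothesis $p\le s$ gives $(1-s)(1-p)=1-s$, and combining this with the support relation $c=(1-q)c(1-s)$ and the defining identities of $c$ one isolates the two simplifications $c(1-p)=c$ and $c(1-q)=1-q$. Feeding $qa(1-p)=0$ (so that $a(1-p)=(1-q)a(1-p)$) into the product $bc$, these two identities collapse $bc(1-p)a(1-p)$ down to $b(1-q)a(1-p)=1-p$, while the same support relations give $bc=(1-p)bc(1-p)$. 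Hence $bc$ is a left $(p,p)$-inverse of $a$. I expect this direction to be routine once the two identities for $c$ are extracted.

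The substantial direction is left to right, and here I would switch to geometry. Assuming $a$ is left invertible up to $(p,p)$, the reformulation gives that $a_{0}=(1-p)a(1-p)$ is bounded below, so $W:=\overline{\mathrm{Im}(a(1-p))}$ and $R:=\overline{\mathrm{Im}(a_{0})}\subseteq(1-p)(H)$ are both closed, with $P_{W},P_{R}\in\mathcal{A}$, and $a$ maps $(1-p)(H)$ isomorphically onto $W$. I would then set $q:=I-P_{W}$ and $s:=I-P_{R}$. Since $\mathrm{Im}(a(1-p))=W$ we get $(1-q)a(1-p)=P_{W}a(1-p)=a(1-p)$, an isomorphism of $(1-p)(H)$ onto $(1-q)(H)=W$, so $a$ is invertible up to $(p,q)$ by Corollary~\ref{isom}, and clearly $qa(1-p)=(I-P_{W})a(1-p)=0$. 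Because $R\subseteq(1-p)(H)$ we have $P_{R}\le 1-p$, hence $p\le I-P_{R}=s$. It then remains to check that $1-s$ is invertible up to $(q,s)$, which by Corollary~\ref{projleft} is precisely the decomposition $H=(1-q)(H)\,\tilde{\oplus}\,s(H)=W\,\tilde{\oplus}\,R^{\perp}$.

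The main obstacle, and the heart of the matter, is establishing this direct-sum decomposition $H=W\,\tilde{\oplus}\,R^{\perp}$ (equivalently, exhibiting a complement $s(H)\supseteq p(H)$ of $W$ that is the range of a projection in $\mathcal{A}$); in infinite dimensions a trivial intersection of two subspaces does not by itself yield a topological complement, so I must use boundedness below rather than mere injectivity. Concretely, for $w\in W\cap R^{\perp}$ one computes $\|(1-p)w\|^{2}=\langle w,(1-p)w\rangle=0$, since $(1-p)w\in R$ is orthogonal to $w$, forcing $w\in p(H)\cap W$; and $p(H)\cap W=\{0\}$ follows because a nonzero common vector $a(1-p)\xi\in p(H)$ would give $a_{0}\xi=0$, contradicting that $a_{0}$ is bounded below. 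For surjectivity $W+R^{\perp}=H$, given $h\in H$ I would take the unique $w\in W$ with $(1-p)w=P_{R}(1-p)h$ and verify that $h-w$ lies in $p(H)\oplus\big((1-p)(H)\ominus R\big)=R^{\perp}$. As $W$ and $R^{\perp}$ are both closed, this yields $H=W\,\tilde{\oplus}\,R^{\perp}$ and, via Corollary~\ref{projleft}, completes the argument.
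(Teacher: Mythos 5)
Your proof is correct, but the forward direction takes a genuinely different route from the paper's. For the "if" part you and the paper do essentially the same algebraic computation with the $(p,q)$-inverse $b$ of $a$ and the $(q,s)$-inverse $c$ of $1-s$, producing $bc$ (resp.\ $bc(1-s)$) as a left $(p,p)$-inverse. For the "only if" part, however, the paper stays purely algebraic: it invokes \cite[Lemma 2.5]{KL} to produce a projection $s$ with $(1-s)(1-p)a(1-p)=(1-p)a(1-p)$ and $(1-p)a(1-p)$ invertible up to $(p,s)$, checks $p(1-s)=0$ so that $a$ itself is invertible up to $(p,s)$ with $p\le s$, and then simply applies Corollary \ref{r10c 1.12} to manufacture the intermediate projection $q$. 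You instead build $q=I-P_{W}$ and $s=I-P_{R}$ spatially from the closed ranges $W=\mathrm{Im}\,a(1-p)$ and $R=\mathrm{Im}\,(1-p)a(1-p)$, and verify $1-s$ invertible up to $(q,s)$ by proving the decomposition $H=W\,\tilde\oplus\,R^{\perp}$ directly and appealing to Corollary \ref{projleft}; your treatment of the intersection and of surjectivity via boundedness below is sound, and all the range projections you use do lie in $\mathcal{A}$. The trade-off: the paper's argument is shorter and works at the level of the abstract $C^{*}$-algebraic machinery already set up (so it recycles Corollary \ref{r10c 1.12} with no new geometry), while yours is more self-contained and makes the projections $q,s$ completely explicit, at the cost of redoing by hand the complementation argument that \cite[Lemma 2.5]{KL} and Corollary \ref{r10c 1.12} encapsulate.
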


\begin{proof}
	Suppose that $a$ is left invertible up to $(p, p).$ By \cite[Lemma 2.5]{KL} there exists some projection $s$ in $\mathcal{A} $ such that $(1-s)(1-p)a(1-p) = (1-p) a (1-p) $ and $ (1-p) a (1-p) $ is invertible up to pair $(p,s) .$ Then we have 
	$$p (1-s) = p (1-s) (1-p) a (1-p) b = p (1-p) a (1-p) b = 0$$ 
	where $b $ is $(p, s)$ inverse of $(1-p) a (1-p) .$ Hence $(1-p) (1-s) = (1-s) (1-p) = 1-s ,$ so it follows that $a$ is invertible up to $(p, s)$ and $b$ is $(p, s)$ inverse of $a.$ By Corollary \ref{r10c 1.12} the implication in one direction follows.
	
	Conversely, let $q, s$ be projections in $\mathcal{A}$ such that $a$ is invertible up to $ (p,q),$ $ q a (1-p) =0, \text{ } p \leq s$ and $1-s $ is invertible up to $(q, s).$ Let $b$ be $ (p, q)$ inverse of $a$ and $c$ be $(q, s)$ inverse of $1-s .$ Then we get 
	$$bc (1-s)  a (1-p) = bc (1-s)  (1-q) a (1-p) = b (1-q) a (1-p) = 1-p ,$$ 
	so $a$ is left invertible up to $(p, p).$
\end{proof}

Set $ \tilde{ \mathcal{K}\Phi}_{+} ^{-}(\mathcal{A}) $ to be the set of all $ T \in \mathcal{A} $ for which there exist some $ P, Q , S \in  Proj (\mathcal{A} ) $ such that $T$ maps $ (I-P) (H) $ isomorphically onto $ (I-Q) (H), H = (I-Q) (H) \tilde \oplus S(H) , P(H) \subseteq S(H)  $ and $ P$ is a finite projection. Then $ \tilde{ \mathcal{K}\Phi}_{+} ^{-}(\mathcal{A}) \subseteq  \mathcal{K}\Phi_{+} ^{-}(\mathcal{A}) .$ For $F \in \mathcal{A}$ set 
$$\sigma_{eA}(F):= \lbrace \lambda \in \mathbb{C} \mid F-\lambda I \notin \tilde{ \mathcal{K}\Phi}_{+} ^{-}(\mathcal{A}) \rbrace . $$  

\begin{corollary}
	We have $$\sigma_{eA}(F) = \cap \lbrace \sigma_{a} (PF_{\mid_{ImP}}) \mid P \text{ is a finite projection in } \mathcal{A} \rbrace,$$
	where $ \sigma_{a} (PF_{\mid_{ImP}}) = \lbrace \lambda \in \mathbb{C}  \mid (PF-\lambda I )_{\mid_{ImP}} \text{ is not bounded below on } Im P \rbrace .$
\end{corollary}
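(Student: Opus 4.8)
The plan is to prove the two inclusions of the asserted identity pointwise in $\lambda$, exactly as in the preceding corollary for $\sigma_{eW}$, by reducing membership in $\tilde{ \mathcal{K}\Phi}_{+} ^{-}(\mathcal{A})$ to a single left-invertibility statement. Replacing $F$ by $F-\lambda I$, it suffices to show that $F\notin\tilde{ \mathcal{K}\Phi}_{+} ^{-}(\mathcal{A})$ if and only if $0\in\sigma_{a}(PF_{\mid_{\operatorname{Im}P}})$ for every finite projection $P$. First I would unwind the definition of $\tilde{ \mathcal{K}\Phi}_{+} ^{-}(\mathcal{A})$ into the language of invertibility up to projection pairs: by Corollary \ref{isom}, ``$F$ maps $(I-P)(H)$ isomorphically onto $(I-Q)(H)$'' is equivalent to ``$F$ is invertible up to $(P,Q)$''; by Corollary \ref{projleft}, ``$H=(I-Q)(H)\tilde\oplus S(H)$'' is equivalent to ``$I-S$ is invertible up to $(Q,S)$''; and ``$P(H)\subseteq S(H)$'' is simply $P\leq S$. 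Thus $F\in\tilde{ \mathcal{K}\Phi}_{+} ^{-}(\mathcal{A})$ means precisely that there exist a finite projection $P$ and projections $Q,S$ with $F$ invertible up to $(P,Q)$, $P\leq S$, and $I-S$ invertible up to $(Q,S)$.

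Next I would recognize these three conditions as the defining conditions appearing in Lemma \ref{r10l 1.10}. After normalizing so that $QF(I-P)=0$ — which is harmless by \cite[Proposition 2.8]{KL}, replacing $Q$ by an equivalent projection — Lemma \ref{r10l 1.10} says exactly that this configuration exists if and only if $F$ is left invertible up to $(P,P)$. Hence the clean intermediate statement
$$
F\in\tilde{ \mathcal{K}\Phi}_{+} ^{-}(\mathcal{A})\ \Longleftrightarrow\ F\ \text{is left invertible up to}\ (P,P)\ \text{for some finite projection}\ P .
$$
This is the semi-Weyl analogue of the reduction ``$F\in\tilde{ \mathcal{K}\Phi}_{0}(\mathcal{A})$ iff $F$ is invertible up to $(P,P)$'' that underlies the $\sigma_{eW}$ corollary, with Lemma \ref{r10l 1.8} there replaced by Lemma \ref{r10l 1.10} here and full invertibility replaced by left invertibility.

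The final step is to translate ``left invertible up to a finite $(P,P)$'' into the bounded-below condition on the compression that defines $\sigma_{a}(PF_{\mid_{\operatorname{Im}P}})$, using \cite[Lemma 2.4]{KL} together with the left-invertibility clause of Corollary \ref{isom} (which identifies left invertibility up to a pair with the corresponding corner being bounded below); having matched the two sides for each finite projection, both inclusions and hence the equality follow. I expect this translation to be the main obstacle: one must verify that left invertibility of $F$ up to $(P,P)$ is detected exactly by the compression $(PF)_{\mid_{\operatorname{Im}P}}$ being bounded below on $\operatorname{Im}P$, and that ranging over all finite projections produces precisely the intersection on the right-hand side. This is where the hypothesis that $\mathcal{A}$ is properly infinite is essential, since it is the corner complementary to a finite projection that carries the subspace on which bounded-belowness is tested; the bookkeeping of kernels, closed ranges, and the correspondence of the finite-projection families on the two sides is the delicate part, whereas the algebraic reduction in the first two paragraphs is routine given the cited lemmas.
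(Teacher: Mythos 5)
Your proposal follows essentially the same route as the paper: the printed proof consists of nothing more than citing Lemma \ref{r10l 1.10}, Corollary \ref{isom} and Corollary \ref{projleft}, and your reduction of membership in $\tilde{\mathcal{K}\Phi}_{+}^{-}(\mathcal{A})$ to left invertibility up to a finite pair $(P,P)$ (via the normalization from \cite[Proposition 2.8]{KL}), followed by the translation into bounded-belowness of a compression, is exactly the intended argument. Your closing remark that the bounded-belowness is really tested on the corner complementary to the finite projection is also the correct reading of the statement as it must be interpreted for the intersection formula to be nontrivial.
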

\begin{proof}
	The statement follows from Lemma \ref{r10l 1.10} , Corollary \ref{isom} and Corollary \ref{projleft} .
\end{proof}

\begin{lemma} \label{r10l 1.11}
	Let $p \in \mathcal{A} $ be a projection and $a \in \mathcal{A} .$ Then $a$ is right invertible up to $(p, p)$ if and only if there exist projections $q, s$ in $\mathcal{A} $ such that $a$ is invertible up to $ (s,q),$ $qa(1-s)=0,$ $p \leq s$ and $(1-p) $ is invertible up to $(q, p).$
\end{lemma}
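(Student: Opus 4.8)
The plan is to treat this as the right‑handed mirror of Lemma \ref{r10l 1.10}, exploiting the fact that $a$ is right invertible up to $(p,p)$ if and only if $a^{*}$ is left invertible up to $(p,p)$; this equivalence is immediate from Lemma \ref{l 01} together with the observation that $((1-p)a(1-p))^{*}=(1-p)a^{*}(1-p)$. Accordingly I would prove both implications directly, importing the ingredients used in the proof of Lemma \ref{r10l 1.10} by first passing to $a^{*}$ and then taking adjoints.

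For the forward implication I would apply \cite[Lemma 2.5]{KL} (exactly as in the proof of Lemma \ref{r10l 1.10}) to $a^{*}$, which is left invertible up to $(p,p)$. This produces a projection $s$ with $p\leq s$, with $(1-s)(1-p)a^{*}(1-p)=(1-p)a^{*}(1-p)$, and with $(1-p)a^{*}(1-p)$ invertible up to $(p,s)$. Taking adjoints and using Lemma \ref{l 01}, I obtain $(1-p)a(1-p)(1-s)=(1-p)a(1-p)$ together with $(1-p)a(1-p)$ invertible up to $(s,p)$. Since $p\leq s$ gives $(1-p)(1-s)=1-s$, the element governing invertibility of this corner is exactly $(1-p)a(1-s)$, so $a$ itself is invertible up to $(s,p)$. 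I would then feed this into the first part of Corollary \ref{r10c 1.12}, with its two projections taken to be $s$ and $p$: this yields a projection $q$ with $a$ invertible up to $(s,q)$, with $qa(1-s)=0$, and with $1-p$ invertible up to $(q,p)$. Together with $p\leq s$ these are precisely the four asserted conditions.

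For the converse I would argue by an explicit construction mirroring the computation in Lemma \ref{r10l 1.10}. Let $b$ be an $(s,q)$‑inverse of $a$ and let $c$ be a $(q,p)$‑inverse of $1-p$, and set $B=(1-s)b(1-q)c(1-p)$. Using $p\leq s$ one checks at once that $B=(1-p)B(1-p)$. To see that $B$ witnesses right invertibility of $a$ up to $(p,p)$, I would compute $(1-p)a(1-p)B$: the inequality $p\leq s$ collapses $(1-p)(1-s)$ to $1-s$; the normalization $qa(1-s)=0$ forces $a(1-s)=(1-q)a(1-s)$ and hence $a(1-s)b=1-q$; and the relation $(1-p)(1-q)c=1-p$, which comes from $1-p$ being invertible up to $(q,p)$, collapses the remaining tail. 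Chaining these three reductions yields $(1-p)a(1-p)B=1-p$, which is exactly right invertibility up to $(p,p)$.

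The step I expect to require the most care is matching the precise asymmetric form of the statement. The naive adjoint of Lemma \ref{r10l 1.10} characterizes right invertibility up to $(p,p)$ in a superficially different way, with the enlarged projection $s$ landing in the codomain slot rather than the domain slot; the real content is therefore to route the argument through the correct adjoint of \cite[Lemma 2.5]{KL} and the correct part of Corollary \ref{r10c 1.12} so that the four conditions emerge verbatim rather than in a merely equivalent guise. Once this bookkeeping is fixed the converse computation is routine, and its only genuine inputs are the normalization $qa(1-s)=0$ and the invertibility of $1-p$ up to $(q,p)$.
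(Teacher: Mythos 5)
Your proposal is correct and follows essentially the same route as the paper: reduce the forward implication to the left-invertible case for $a^{*}$ (via Lemma \ref{l 01} and the argument of Lemma \ref{r10l 1.10} based on \cite[Lemma 2.5]{KL}), conclude that $a$ is invertible up to $(s,p)$ for some $s\geq p$, and then invoke the first part of Corollary \ref{r10c 1.12}; for the converse, your candidate right inverse $B=(1-s)b(1-q)c(1-p)$ coincides with the paper's $(1-s)bc$ and the three reductions you list are exactly the paper's computation. No gaps.
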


\begin{proof}
	Suppose that $a$ is right invertible up to $(p, p).$ Then $a^{*}$ is left invertible up to $(p, p),$ hence, by the proof of Lemma \ref{r10l 1.10} there exists some projection $s \geq p $ such that $a^{*}$ is invertible up to $(p, s).$ By Lemma \ref{l 01} it follows that $a$ is invertible up to $(s,p).$ Hence, by Corollary \ref{r10c 1.12} the implication in one direction follows. Conversely, let $q, s$ be projections in $\mathcal{A}$ such that $a$ is invertible up to $(s,q), $ $qa(1-s)=0,$ $p \leq s$ and $1-p $ is invertible up to $(q, p).$ Let $b $ be $(s, q)$ inverse of $a$ and $ c$ be $(q, p)$ inverse of $1-p .$ Then we get 
	$$(1-p)a(1-p)(1-s)bc = (1-p)a(1-s)bc $$ 
	$$= (1-p)(1-q)a(1-s)bc = (1-p)(1-q)c = 1-p  ,$$ 
	so $a$ is right invertible up to $(p, p).$
\end{proof}
Set $ \tilde{ \mathcal{K}\Phi}_{-} ^{+}(\mathcal{A}) $ to be the set of all $ T \in \mathcal{A} $ for which there exist some $ P, Q , S \in  Proj (\mathcal{A} ) $ such that $ T$ maps $ (I-S) (H) $ isomorphically onto $ (I-Q) (H), H = (I-Q) (H) \tilde \oplus P(H), P(H) \subseteq S(H) $ and $Q$ is a finite projection.\\ Notice that, since we have that $ H = (I-Q) (H) \tilde \oplus P(H) , $ it follows that $ Q $ maps  $P(H) $ isomorphically onto $ Q(H) ,$ hence $ P \sim Q .$ Moreover, $ \tilde{ \mathcal{K}\Phi}_{-} ^{+}(\mathcal{A}) \subseteq  \mathcal{K}\Phi_{-} ^{+}(\mathcal{A}).$ \\
For $F \in \mathcal{A}$ set 
$$\sigma_{eD}(F):= \lbrace \lambda \in \mathbb{C} \mid F-\lambda I \notin \tilde{ \mathcal{K}\Phi}_{-} ^{+}(\mathcal{A}) \rbrace . $$
\begin{corollary}
	We have $$\sigma_{eD}(F) = \cap \lbrace \sigma_{d} (PF_{\mid_{ImP}}) \mid P \text{ is a finite projection in } \mathcal{A} \rbrace,$$
	where $ \sigma_{d} (PF_{\mid_{ImP}}) = \lbrace \lambda \in \mathbb{C}  \mid (PF-\lambda I )_{\mid_{ImP}} \text{ is not  onto } Im P \rbrace .$
\end{corollary}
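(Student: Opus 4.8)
The plan is to mirror the proof of the preceding corollary, replacing the use of Lemma~\ref{r10l 1.10} by its right-handed analogue Lemma~\ref{r10l 1.11}. Fix $\lambda \in \mathbb{C}$ and put $a = F - \lambda I$. By the very definition of $\sigma_{eD}$, we have $\lambda \notin \sigma_{eD}(F)$ exactly when $a \in \tilde{ \mathcal{K}\Phi}_{-}^{+}(\mathcal{A})$, while $\lambda$ fails to lie in the intersection on the right-hand side exactly when there is a finite projection $P$ in $\mathcal{A}$ for which $(PF-\lambda I)_{\mid_{Im P}}$ is onto $Im P$. Since $a = F - \lambda I$ gives $(PF - \lambda I)_{\mid_{Im P}} = (Pa)_{\mid_{Im P}}$ on $Im P$, it therefore suffices to show that $a \in \tilde{ \mathcal{K}\Phi}_{-}^{+}(\mathcal{A})$ if and only if the compression of $a$ determined by some finite projection $P$ is surjective.

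First I would convert the surjectivity of the compression into an invertibility statement up to a pair of projections. Applying Corollary~\ref{isom}, surjectivity of this compression is equivalent to $a$ being right invertible up to $(P,P)$. Thus the task reduces to proving that $a \in \tilde{ \mathcal{K}\Phi}_{-}^{+}(\mathcal{A})$ if and only if $a$ is right invertible up to $(P,P)$ for some finite projection $P$.

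For the nontrivial direction I would feed $a$ into Lemma~\ref{r10l 1.11}: right invertibility of $a$ up to $(P,P)$ yields projections $q,s$ with $a$ invertible up to $(s,q)$, $qa(1-s)=0$, $P \le s$, and $1-P$ invertible up to $(q,P)$. I would then read these three pieces off against the definition of $\tilde{ \mathcal{K}\Phi}_{-}^{+}(\mathcal{A})$, setting $S = s$ and $Q = q$. Corollary~\ref{isom} turns the statement $a$ invertible up to $(s,q)$ into the assertion that $a$ maps $(I-s)(H)$ isomorphically onto $(I-q)(H)$; Corollary~\ref{projleft} turns $1-P$ invertible up to $(q,P)$ into the direct sum decomposition $H = (I-q)(H) \tilde{\oplus} P(H)$; and $P \le s$ gives $P(H) \subseteq S(H)$. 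Finally, that decomposition together with the remark following the definition forces $q \sim P$, so that $Q = q$ is finite, which is exactly the remaining requirement in the definition of $\tilde{ \mathcal{K}\Phi}_{-}^{+}(\mathcal{A})$. For the converse I would run the same identifications backwards, using Corollary~\ref{r10c 1.12} to arrange the normalization $qa(1-s)=0$ that Lemma~\ref{r10l 1.11} requires.

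The main obstacle is the bookkeeping around finiteness and this normalization: one must check that the projection $q$ produced by Lemma~\ref{r10l 1.11} is genuinely finite (this is where $q \sim P$ and the properly infinite hypothesis on $\mathcal{A}$ enter) and that the condition $qa(1-s)=0$ can be installed without destroying the other relations, which is precisely the content of Corollary~\ref{r10c 1.12}. Alternatively, since Lemma~\ref{r10l 1.11} is itself obtained from Lemma~\ref{r10l 1.10} by passing to adjoints via Lemma~\ref{l 01}, one could instead deduce the present identity directly from the already established corollary for $\sigma_{eA}$, replacing $F$ by $F^{*}$ and using that $a$ is right invertible up to $(P,P)$ if and only if $a^{*}$ is left invertible up to $(P,P)$.
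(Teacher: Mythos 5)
Your proof follows exactly the route the paper takes: the paper's entire proof of this corollary is the citation of Lemma \ref{r10l 1.11}, Corollary \ref{isom} and Corollary \ref{projleft}, and your write-up is the fleshed-out version of that citation, with the correct matching of $s,q$ against the $S,Q$ in the definition of $\tilde{ \mathcal{K}\Phi}_{-}^{+}(\mathcal{A})$ and the correct use of $q\sim P$ (via the decomposition $H=(I-q)(H)\tilde\oplus P(H)$) to obtain finiteness of $Q$. One caveat worth recording: as Corollary \ref{isom} is stated, surjectivity of the compression of $a$ onto $Im\,P$ is equivalent to right invertibility of $a$ up to $(I-P,I-P)$, not up to $(P,P)$; right invertibility up to $(P,P)$ --- which is what Lemma \ref{r10l 1.11} applied with $p=P$, and your subsequent finiteness argument, actually require --- corresponds to surjectivity of the compression to $(I-P)(H)$. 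So your argument, run consistently, characterizes membership in $\tilde{ \mathcal{K}\Phi}_{-}^{+}(\mathcal{A})$ by surjectivity of compressions to ranges of projections with \emph{finite complement} (as in Zemanek's original theorem), and the residual mismatch with the displayed formula is a complementation ambiguity already present in the statement (and in the two companion corollaries) rather than a defect of your proof.
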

\begin{proof}
	The statement follows from Lemma \ref{r10l 1.11} ,Corollary \ref{isom} and Corollary \ref{projleft} . 
\end{proof}

\section{Semi-B-Fredholm theory in von Neumann algebras}

In this section we wish to introduce semi-B-Fredholm theory in von Neumann algebras as a generalization of semi-B-Fredholm theory on Hilbert and Banach spaces established in \cite{BS} and \cite{BM}.
First we recall the following definition regarding the concept of exactness in a von Neumann algebra. 

\begin{definition} \label{D15}   
	 Let $M_{1},\dots ,M_{n}  $ be closed subspaces of $H$ such that $P_{M_{j} } \in Proj(\mathcal{A} ) $ for all $ j.$ We say that the sequence $ 0 \rightarrow M_{1} \rightarrow M_{2} \rightarrow \dots  \rightarrow M_{n} \rightarrow 0$ is exact if for each $k \in \lbrace 2,\dots ,n-1 \rbrace $ there exist closed subspaces $M_{k}^{\prime}$ and $ M_{k}^{\prime \prime}$ with $ P_{M_{k}^{\prime} } , P_{ M_{k}^{\prime \prime} } \in Proj(\mathcal{A} )$ such that the following holds:\\
	1)	$ M_{k}=M_{k}^{\prime} \tilde{ \oplus} M_{k}^{\prime \prime} $ for all $k \in \lbrace 2,\dots ,n-1 \rbrace ;$\\
	2)	$P_{M_{2}^{\prime} } \sim P_{M_{1} } $ and $ P_{M_{n-1}^{\prime \prime} } \sim P_{M_{n} };$\\
	3)	$ P_{M_{k}^{\prime \prime} } \sim P_{M_{k+1}^{\prime} } $ for all $k \in \lbrace 2,\dots ,n-2 \rbrace  . $
\end{definition}

The next lemma can be proved in a similar way as \cite[Proposition 3]{IS5} and \cite[Lemma 2]{IS5}. For the convenience, we give the full proof since we will refer to the certain parts of this proof later in the paper. 

\begin{lemma} \label{r12 l15}
	Let $ D, F \in \mathcal{A} $ and suppose that $ Im F, Im D $ and $ Im DF $ are closed. Then the sequence $$ 0 \rightarrow \ker F \rightarrow \ker DF \rightarrow \ker D \rightarrow ImF^{ \perp } \rightarrow Im DF^{ \perp } \rightarrow ImD^{ \perp} \rightarrow 0 $$ is exact in $ \mathcal{A} .$
\end{lemma}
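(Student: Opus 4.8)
The plan is to read off the six subspaces from the sequence, label them $M_1=\ker F$, $M_2=\ker DF$, $M_3=\ker D$, $M_4=(Im F)^{\perp}$, $M_5=(Im DF)^{\perp}$, $M_6=(Im D)^{\perp}$, and verify the conditions of Definition \ref{D15} for the internal nodes $k\in\{2,3,4,5\}$. First I would record that all the projections in sight lie in $\mathcal{A}$: $P_{\ker F},P_{\ker DF},P_{\ker D}$ and $P_{(Im F)^{\perp}},P_{(Im DF)^{\perp}},P_{(Im D)^{\perp}}$ are complements of range projections of closed-range operators in $\mathcal{A}$, while the relevant intersections (notably $Im F\cap\ker D$ and $Im D\cap(Im DF)^{\perp}$) have projections in $\mathcal{A}$ by Remark \ref{rem101}. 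The uniform mechanism for each required equivalence $P_{M_k''}\sim P_{M_{k+1}'}$ will be: exhibit an operator $T\in\mathcal{A}$ that is injective and bounded below on $M_k''$ with closed image equal to $M_{k+1}'$, and then use that $P_{M_k''}=P_{(\ker T P_{M_k''})^{\perp}}\sim P_{\overline{Im\,TP_{M_k''}}}=P_{M_{k+1}'}$, exactly the Murray--von Neumann equivalence exploited in the proof of Lemma \ref{r12 l13}.

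The two endpoint conditions are equalities: with $M_2'=\ker F$ we get $P_{M_2'}=P_{M_1}$, and with $M_5''=(Im D)^{\perp}$ (which sits inside $(Im DF)^{\perp}$ since $Im DF\subseteq Im D$) we get $P_{M_5''}=P_{M_6}$. For the first nontrivial equivalence I set $M_2''=\ker DF\ominus\ker F$ and take $T=F$. Since $F$ is bounded below on $(\ker F)^{\perp}\supseteq M_2''$ and $F(\ker DF)=Im F\cap\ker D$ is closed as an intersection of closed subspaces, $F$ maps $M_2''$ isomorphically onto $M_3':=Im F\cap\ker D$, giving $P_{M_2''}\sim P_{M_3'}$ automatically.

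For the tail I would pass to adjoints: since $(DF)^{*}=F^{*}D^{*}$, the reversed tail $0\to\ker D^{*}\to\ker F^{*}D^{*}\to\ker F^{*}$ is precisely the "head" sequence associated with the pair $(F^{*},D^{*})$, whose closed-range hypotheses $Im F^{*},Im D^{*},Im F^{*}D^{*}$ all hold by Lemma \ref{r12 l13}. Because the notion of exactness in Definition \ref{D15} is symmetric under reversing the sequence (reversing simply interchanges the roles of the primed and double-primed summands and the corresponding equivalences), the head analysis applied to $(F^{*},D^{*})$ yields the splitting of $M_5$ and the equivalence $P_{M_4''}\sim P_{M_5'}$, realized by $D^{*}$ restricted to the relevant summand; this is again automatic, since $D^{*}$ is bounded below on $(\ker D^{*})^{\perp}$ and the image is the intersection $Im D^{*}\cap(Im F)^{\perp}$.

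The genuine obstacle is the connecting equivalence $P_{M_3''}\sim P_{M_4'}$, which glues the two heads together. Here $M_3''=\ker D\ominus(Im F\cap\ker D)$ and the natural map is the compression $T=P_{(Im F)^{\perp}}\!\restriction_{\ker D}$; its kernel on $\ker D$ is exactly $Im F\cap\ker D=M_3'$, so $T$ is injective on $M_3''$ and its image $M_4':=P_{(Im F)^{\perp}}(\ker D)\subseteq(Im F)^{\perp}$ agrees with $M_4\ominus M_4''$ by the classical exactness bookkeeping. The delicate point, and the step I expect to be hardest, is that $T$ is \emph{bounded below} on $M_3''$ with \emph{closed} range: this is equivalent to $Im F+\ker D$ being closed, which I would deduce from the hypothesis that $Im DF=D(Im F)$ is closed via the standard fact that, for $D$ of closed range and a closed subspace $A$, the sum $A+\ker D$ is closed if and only if $D(A)$ is closed. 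Once this is in place, the partial isometry from the polar decomposition of $TP_{M_3''}$ lies in $\mathcal{A}$, $P_{M_4'}\in\mathcal{A}$, and $P_{M_3''}\sim P_{M_4'}$ follows; assembling the five equivalences then gives exactness of the whole six-term sequence in the sense of Definition \ref{D15}.
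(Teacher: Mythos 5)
Your argument is correct, and while the head of the sequence is treated exactly as in the paper, the connecting map and the tail follow a genuinely different route. Both proofs split $\ker DF$ orthogonally off $\ker F$ and use that $F$ carries the complement isomorphically onto $Im F\cap \ker D$ (whose projection lies in $\mathcal{A}$ by Remark \ref{rem101}). From there the paper builds a pseudo-inverse $S$ of $D$ (an element of $\mathcal{A}$ by Lemma \ref{r12 l14}) and invokes the decomposition $H=Im F\ \tilde\oplus\ S(X)\ \tilde\oplus\ M$ from \cite[Proposition 3]{IS5}, with $X=Im D\ominus Im DF$ and $M=\ker D\ominus(\ker D\cap Im F)$; compressing by $P_{Im F^{\perp}}$ then yields both remaining equivalences at once, the complement of $P_{Im F^{\perp}}(M)$ in $Im F^{\perp}$ being realized as $P_{Im F^{\perp}}(S(X))\sim S(X)\sim X$. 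You instead treat the connecting equivalence directly via the compression $P_{Im F^{\perp}}\!\restriction_{\ker D}$, reducing its bounded-belowness to the closedness of $Im F+\ker D$, which you correctly extract from the closedness of $Im DF=D(Im F)$; and you dispose of the tail by duality, letting $D^{*}$ carry $X$ onto $Im D^{*}\cap\ker F^{*}=(Im F+\ker D)^{\perp}$. Your splitting of $Im F^{\perp}$ is in fact orthogonal, $Im F^{\perp}=\bigl(Im F^{\perp}\cap(Im F+\ker D)\bigr)\oplus(Im F+\ker D)^{\perp}$, which makes the ``bookkeeping'' you allude to immediate. The paper's route avoids the closed-sum criterion and the adjoint symmetry of Definition \ref{D15} at the price of importing the three-fold decomposition of $H$ and the operator $S$ from \cite{IS5}; yours is more self-contained and isolates the single analytic point (closedness of $Im F+\ker D$) explicitly. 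One citation in your sketch is off: the closedness of $Im F^{*}$, $Im D^{*}$ and $Im (DF)^{*}$ comes from the closed range theorem, not from Lemma \ref{r12 l13}, which concerns finiteness of range projections.
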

\begin{proof}
	By Remark \ref{rem101} we have that $ P_{ \ker D \cap ImF} \in \mathcal{A} .$ Let $M$ and $W$ denote the orthogonal complements of $ \ker D \cap ImF $ in $ \ker D $ and $ ImF ,$ respectively. Then, since $ P_{M} = P_{ \ker D } - P_{ker D \cap ImF} $ and $ P_{W} = P_{ Im F } - P_{ker D \cap ImF} ,$ we get that $ P_{M} , P_{W} \in \mathcal{A} .$ Likewise, if $X$ denotes the orthogonal complement of $ Im DF $ in $ Im D ,$ then $ P_{X} \in \mathcal{A} .$\\
	With respect to the decomposition $ \ker D ^{ \perp} \oplus \ker D  \rightarrow ImD \oplus ImD ^{ \perp } ,$ the operator $D$ has the matrix $ 	\begin{pmatrix}
		D_{1}  & 0 \\
		0 & 0
	\end{pmatrix} ,$ where $ D_{1} $ is an isomorphism. Let $S$ be the operators with the matrix $ \begin{pmatrix}
	D_{1} ^{ -1} & 0 \\
	0 & 0
\end{pmatrix} $ with respect to the decomposition $ ImD \oplus ImD ^{ \perp } \rightarrow \ker D ^{ \perp} \oplus \ker D  .$ Then, by Lemma \ref{r12 l14} we have that  $ S \in \mathcal{A} .$ By the same arguments as in the proof of \cite[Proposition 3]{IS5} we deduce that $P_{ \ker D ^{ \perp }} $ maps $W$ isomorphically onto $ Im SDF $ and that $ H = ImF \tilde \oplus S(X) \tilde \oplus M .$ Moreover, since $ S(X) = Im S P_{X} $ is closed, we have that $ P_{S(X)} \in \mathcal{A} .$ Hence $ P_{ S(X) \oplus M} =  P_{S(X)} + P_{M} \in \mathcal{A} .$ From the equation $ H = ImF \tilde \oplus S(X) \tilde \oplus M  $ we obtain that $ Im P_{ ImF ^{ \perp}} = Im P_{ ImF ^{ \perp}} P_{ S(X) \oplus M} = ImF ^ { \perp } $ and $ \ker P_{ ImF ^{ \perp}} P_{ S(X) \oplus M} = (S(X) \oplus M) ^ { \perp } .$ Therefore, $ P_{ S(X) \oplus M} \sim P_{ ImF ^{ \perp}} $ in $ \mathcal{A} ,$ so $ (P_{ S(X) } + P_{M}) \sim P_{ ImF ^{ \perp}} .$ \\
Further, let $ \tilde W $ denote the orthogonal complement of $ \ker F $ in $ \ker DF .$ Then $ P_{ \tilde W} ,$ which is equal to $ P_{ \ker DF  } - P_{ \ker F  } ,$ belongs to $ \mathcal{A} .$ By the same arguments as in the proof of \cite[Proposition 3]{IS5} we can show that $F$ maps $ \tilde W $ isomorphically onto $ \ker D \cap ImF .$ Hence, since $ \ker F P_{ \tilde W} = \tilde W ^ { \perp } $ and $ Im F P_{ \tilde W} = \ker D \cap ImF ,$ we get that $ P_{ \tilde W} \sim P_{\ker D \cap ImF} $ in $ \mathcal{A} . $ \\
Next, since $S$ maps $X$ isomorphically onto $S(X) ,$ we get that $ P_{ S(X) } \sim P_{X} $ as $ Im S P_{X} =S(X) $ and $ \ker S P_{X} = X ^ { \perp } .$ \\
Summarizing all this, we obtain the following chain of relations: 
$$  P_{ \ker DF  } = P_{ \ker F  } + P_{ \tilde W } ,
 P_{ \tilde W} \sim P_{\ker D \cap ImF} .$$
 $$ P_{ker D \cap ImF} + P_{M} = P_{ \ker D }  , (P_{ S(X) } + P_{M}) \sim P_{ ImF ^{ \perp}} . $$ 
 $$ P_{ S(X) } \sim P_{X} , P_{ ImDF ^ { \perp }} = P_{ ImD ^ { \perp }} + P_{X} .$$ 
 \end{proof}

In a similar way as in the proof of \cite[Lemma 9]{BJMA} and \cite[Corollary 25]{BJMA}, we can prove the following lemma. 
\begin{lemma} \label{r12 l16}
	Let $F \in \mathcal{A}$ and $ P_{0} \in Proj ( \mathcal{A} )$ such that $ P $ is not finite. 
	Suppose that $ F P_{0} = P_{0} FP_{0} .$  Then the following holds.\\
	1) If $ F P_{0} \in \mathcal{K} \Phi_{+} (P_{0} \mathcal{A}  P_{0} ) ,$ then $ P_{ \ker F \cap P_{0} (H)} $ is finite. Hence, if $ Im F P_{0} $ is closed, then $ F P_{0} \in \mathcal{K} \Phi_{+} (P_{0} \mathcal{A}  P_{0} ) $ if and only if $ P_{ \ker F \cap P_{0} (H)} $ is finite.\\
	2) If $ F P_{0} \in \mathcal{K} \Phi_{-} (P_{0} \mathcal{A}  P_{0} ) ,$ then $ P_{0} - P_{ \overline{ Im F P_{0} } } $ is finite. Hence, if $ Im F P_{0} $ is closed, then $ F P_{0} \in \mathcal{K} \Phi_{-} (P_{0} \mathcal{A}  P_{0} ) $ if and only if $ P_{0} - P_{  Im F P_{0}  } $ is finite.
\end{lemma}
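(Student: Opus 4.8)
The plan is to work inside the reduced von Neumann algebra $\mathcal{B} = P_{0} \mathcal{A} P_{0}$, which is properly infinite because $P_{0}$ is not finite (as noted in the proof of Proposition \ref{r12 p17}); consequently the relation (\ref{f1}) holds in $\mathcal{B}$, so the projections belonging to the finite type elements of $\mathcal{B}$ are exactly the finite projections of $\mathcal{B}$, and a subprojection of $P_{0}$ is finite in $\mathcal{B}$ if and only if it is finite in $\mathcal{A}$. The hypothesis $F P_{0} = P_{0} F P_{0}$ says precisely that $a := F P_{0}$ is an element of $\mathcal{B}$, viewed as an operator on $Im P_{0}$. A direct computation shows that its kernel is $\ker a = \ker F \cap Im P_{0}$, so $P_{\ker a} = P_{\ker F \cap Im P_{0}}$, while its cokernel is $\ker a^{*} = Im P_{0} \ominus \overline{Im F P_{0}}$, so $P_{\ker a^{*}} = P_{0} - P_{\overline{Im F P_{0}}}$. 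Thus both statements are assertions about the kernel, respectively cokernel, projection of the single operator $a$ in $\mathcal{B}$.

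For the first statement I would start from the assumption $a \in \mathcal{K}\Phi_{+}(\mathcal{B})$, i.e. $a$ is invertible up to a pair $(p,q)$ with $p$ a finite projection of $\mathcal{B}$, and let $b$ be the corresponding almost inverse. From $b (P_{0}-q) a (P_{0}-p) = b a' = P_{0}-p$ one reads off that $\|y\| \le \|b\|\,\|a y\|$ for every $y \in (P_{0}-p)(H)$, so $a$ is bounded below on $(P_{0}-p)(H)$ and hence $\ker a \cap (P_{0}-p)(H) = \lbrace 0 \rbrace$, that is $P_{\ker a} \wedge (P_{0}-p) = 0$. Applying Kaplansky's parallelogram law in $\mathcal{B}$ then gives $P_{\ker a} \sim \left( P_{\ker a} \vee (P_{0}-p) \right) - (P_{0}-p) \le p$, whence $P_{\ker a} \preceq p$ and $P_{\ker a}$ is finite. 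For the converse, under the extra hypothesis that $Im F P_{0}$ is closed, I would put $p = P_{\ker a}$, which is finite, and $q = P_{0} - P_{\overline{Im a}}$; since $a$ then maps $(P_{0}-p)(H)$ isomorphically onto the closed subspace $Im a$, Corollary \ref{isom} applied inside $\mathcal{B}$ shows that $a$ is invertible up to $(p,q)$ with $p$ finite, i.e. $a \in \mathcal{K}\Phi_{+}(\mathcal{B})$.

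For the second statement I would pass to adjoints. By Lemma \ref{l 01}, $a$ is invertible up to $(p,q)$ in $\mathcal{B}$ if and only if $a^{*}$ is invertible up to $(q,p)$, so $a \in \mathcal{K}\Phi_{-}(\mathcal{B})$ if and only if $a^{*} \in \mathcal{K}\Phi_{+}(\mathcal{B})$. Applying the first statement to $a^{*}$ gives that $P_{\ker a^{*}} = P_{0} - P_{\overline{Im F P_{0}}}$ is finite, and the stated equivalence under closedness of $Im F P_{0}$ follows from the converse of the first statement applied to $a^{*}$, noting that $Im a$ is closed if and only if $Im a^{*}$ is. The main obstacle is the comparison-theoretic step in the first statement: extracting finiteness of $P_{\ker a}$ from invertibility up to $(p,q)$ requires the boundedness-below estimate together with the parallelogram law, and one must be careful that finiteness is taken in $\mathcal{B}$ and that this coincides with finiteness in $\mathcal{A}$ for subprojections of $P_{0}$. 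Everything else is bookkeeping with the matrix forms already developed in Corollary \ref{isom} and Lemma \ref{r12 l14}.
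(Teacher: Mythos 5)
Your proposal is correct and follows essentially the route the paper intends: the paper gives no details here, deferring to the proofs of Lemma 9 and Corollary 25 of \cite{BJMA}, and your argument --- reduce to $a=FP_{0}$ as an element of the corner algebra $P_{0}\mathcal{A}P_{0}$, read off $\ker a=\ker F\cap P_{0}(H)$, use the almost inverse to get boundedness below on $(P_{0}-p)(H)$ and hence $P_{\ker a}\wedge(P_{0}-p)=0$, then Kaplansky's parallelogram law to get $P_{\ker a}\preceq p$, with Corollary \ref{isom} for the converse under closed range and adjoints via Lemma \ref{l 01} for part 2) --- is exactly the standard argument those citations encode. The only point to flag is that your claim that $P_{0}\mathcal{A}P_{0}$ is properly infinite because $P_{0}$ is not finite is the same (strictly speaking unjustified in a non-factor, but explicitly adopted) assertion the paper itself makes in the proof of Proposition \ref{r12 p17}, so you are consistent with the paper's conventions.
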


\begin{remark}
	If $P \in Proj(P_{0} \mathcal{A}  P_{0} ),$ then it is not hard to see that $ P \in Proj_{0} (P_{0} \mathcal{A}  P_{0} ) $ if and only if $ P \in Proj_{0} ( \mathcal{A}   ) .$ Indeed, the implication in one direction is obvious, whereas for the implication in the opposite direction it suffices to observe as in the proof of Lemma \ref{r12 l12} that if $ P, Q \in Proj(P_{0} \mathcal{A}  P_{0} )$ and $ V \in \mathcal{A} $ is such that $ P= V V^{*} $ and $ Q= V^{*} V ,$ then $ P= (PVQ) (PVQ)^{*} $ and $ Q= (PVQ)^{*} (PVQ) ,$ however, $ PVQ \in P_{0} \mathcal{A}  P_{0} .$
\end{remark}

\begin{lemma} \label{r12 l17}
	Let $ F \in \mathcal{A} $ and suppose that $ Im F$ and $ Im F ^ {2}$ are closed.Then the following holds.\\
	1) If  $ F  \in \mathcal{K} \Phi_{+} ( \mathcal{A}   ) ,$  then  $ F P_{Im F} \in \mathcal{K} \Phi_{+} (P_{Im F} \mathcal{A}  P_{Im F} ) .$ \\
	2) If  $ F  \in \mathcal{K} \Phi_{-} ( \mathcal{A}   ) ,$  then  $ F P_{Im F} \in \mathcal{K} \Phi_{-} (P_{Im F} \mathcal{A}  P_{Im F} ) .$ \\
	3) If  $ F  \in \mathcal{K} \Phi ( \mathcal{A}   ) ,$  then  $ F P_{Im F} \in \mathcal{K} \Phi (P_{Im F} \mathcal{A}  P_{Im F} ) $ and in this case $ index \text{ } F =  index \text{ } F P_{ Im F} .$
	\end{lemma}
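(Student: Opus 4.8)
The plan is to transport everything into the corner algebra $P\mathcal{A}P$, where $P := P_{Im F}$ (legitimate since $Im F$ is closed), and then apply Lemma \ref{r12 l16}. First I would record the facts that make $FP$ an element of $P\mathcal{A}P$ satisfying the hypotheses of that lemma. Since $Im\, FP = F(Im F) = Im F^{2} \subseteq Im F = P(H)$, we have $FP = PFP$, and $Im\, FP = Im F^{2}$ is closed by assumption. I also need $P$ non-finite, so that $P\mathcal{A}P$ is properly infinite: this holds automatically, because $F \in \mathcal{K}\Phi_{+}(\mathcal{A})$ forces $P_{\ker F}$ finite while $F \in \mathcal{K}\Phi_{-}(\mathcal{A})$ forces $1 - P = P_{\ker F^{*}}$ finite, and were $P$ finite in addition we would exhibit $1 = P + (1-P)$ as an orthogonal sum of two finite projections, contradicting proper infiniteness of $\mathcal{A}$.

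For statement 1, I would note that $\ker F \cap Im F \subseteq \ker F$, so by Remark \ref{rem101} $P_{\ker F \cap Im F} \in \mathcal{A}$ and $P_{\ker F \cap Im F} \le P_{\ker F} \in Proj_{0}(\mathcal{A})$; hence it is finite, and Lemma \ref{r12 l16} (1) (its closed-range hypothesis being met) yields $FP \in \mathcal{K}\Phi_{+}(P\mathcal{A}P)$. For statement 2, I would feed the exact sequence of Lemma \ref{r12 l15} with $D = F$, so that $Im\, DF = Im F^{2}$; the chain of relations established in its proof gives $P_{S(X)} + P_{M} \sim P_{Im F^{\perp}} = 1 - P$ and $P_{S(X)} \sim P_{X}$, where $X$ is the orthogonal complement of $Im F^{2}$ in $Im F$, so that $P_{X} = P - P_{Im F^{2}}$. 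Since $1 - P$ is finite, $P_{S(X)} + P_{M}$ is finite, hence so is its subprojection $P_{S(X)}$ and therefore $P_{X} = P - P_{Im F^{2}}$; Lemma \ref{r12 l16} (2) then gives $FP \in \mathcal{K}\Phi_{-}(P\mathcal{A}P)$.

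For statement 3, combining the previous two shows that in $P\mathcal{A}P$ the operator $FP$ has finite kernel projection $P_{\ker F \cap Im F}$ and finite cokernel projection $P - P_{Im F^{2}}$ with closed range. Concretely $FP$ maps $Im F \ominus (\ker F \cap Im F)$ isomorphically onto $Im F^{2}$, so by Corollary \ref{isom} it is invertible up to $(P_{\ker F \cap Im F}, P - P_{Im F^{2}})$ with both projections finite, whence $FP \in \mathcal{K}\Phi(P\mathcal{A}P)$ with $index\, FP = [P_{\ker F \cap Im F}] - [P - P_{Im F^{2}}]$. Writing $index\, F = [P_{\ker F}] - [P_{\ker F^{*}}]$, the equality $index\, F = index\, FP$ then follows in the Grothendieck group of finite projections from $[P_{\ker F}] = [P_{\ker F \cap Im F}] + [P_{M}]$ (orthogonal splitting of $\ker F$), $[P_{\ker F^{*}}] = [P_{S(X)}] + [P_{M}]$ and $[P_{X}] = [P_{S(X)}]$, which telescope to $[P_{\ker F}] - [P_{\ker F^{*}}] = [P_{\ker F \cap Im F}] - [P_{X}]$.

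I expect the index identity to be the main obstacle. Its difficulty is threefold: correctly identifying the kernel and cokernel of the compression $FP$ inside $P\mathcal{A}P$ rather than in $\mathcal{A}$; reconciling the index group of $P\mathcal{A}P$ with that of $\mathcal{A}$, where the remark following Lemma \ref{r12 l16} (that a projection of $P\mathcal{A}P$ is finite, and Murray--von Neumann equivalent in $P\mathcal{A}P$, exactly when it is so in $\mathcal{A}$) is what allows all classes to be computed in one Grothendieck group; and extracting precisely the right equivalences from the proof of Lemma \ref{r12 l15} specialised to $D = F$. At each stage one must also check that every projection entering a class, namely $P_{\ker F}$, $P_{\ker F^{*}}$, $P_{\ker F \cap Im F}$, $P_{M}$, $P_{S(X)}$ and $P_{X}$, is finite, so that the Grothendieck classes are defined.
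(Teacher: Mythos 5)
Your proposal is correct and follows essentially the same route the paper compresses into one line: establish that $P_{Im F}$ is not finite so that $P_{Im F}\mathcal{A}P_{Im F}$ is properly infinite, apply the finiteness characterizations of Lemma \ref{r12 l16} (equivalently \cite[Corollary 25]{BJMA}), and read off the needed equivalences and the index identity from the chain of relations in the proof of Lemma \ref{r12 l15} specialised to $D=F$. The only slip is in the upper semi-Fredholm case of your proper-infiniteness argument, where the contradiction should come from $1=P_{\ker F}+P_{\ker F^{\perp}}$ together with $P_{\ker F^{\perp}}\sim P_{Im F}$ rather than from $1=P+(1-P)$, since $1-P$ is not known to be finite there; everything else, including the telescoping index computation, matches the paper's intended argument.
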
 
\begin{proof}
	We recall first that $ P_{ \ker F ^{ \perp}} \sim P_{Im F} .$ Hence, if $ P_{ \ker F } \in Proj_{0} ( \mathcal{A}   ) ,$ then $ P_{Im F} $ can not be finite since $ \mathcal{A} $ is properly infinite. Similarly, if $ P_{Im F ^{ \perp }} $ is finite, then $ P_{Im F} $ is not finite, so in both cases $ P_{Im F} \mathcal{A}  P_{Im F} $ is a properly infinite von Neumann algebra. By applying \cite[Corollary 25]{BJMA} and Lemma  \ref{r12 l16} together with the proof of Lemma \ref{r12 l15} in the special case when $ F = D ,$ we deduce the result.
\end{proof}
\begin{definition}
	Let $ F \in \mathcal{A} .$ Then $ F $ is said to be an upper semi-$\mathcal{A} -$ B- Fredholm operator if there exists some $ n \in \mathbb{N} $ such that $ Im F^{m} $ is closed for all $ m \geq n , P_{Im F^{n} }$ is not finite and $ F P_{Im F^{n}} \in \mathcal{K} \Phi_{+} (P_{Im F^{n}} \mathcal{A}  P_{Im F^{n}} ) .$ Similarly, $ F $ is said to be a lower semi-$\mathcal{A} -$ B- Fredholm operator if the above conditions hold except that in the this case we assume that  $ F P_{Im F^{n}} \in \mathcal{K} \Phi_{-} (P_{Im F^{n}} \mathcal{A}  P_{Im F^{n}} ) .$ Finally, we say that $F$ is an $\mathcal{A} -$ B- Fredholm operator if the above conditions hold, however, in the this case we assume that  $ F P_{Im F^{n}} \in \mathcal{K} \Phi (P_{Im F^{n}} \mathcal{A}  P_{Im F^{n}} ) .$
\end{definition}

\begin{proposition} \label{bf1}
	If $F$ is an upper semi-$\mathcal{A} -$ B- Fredholm operator, $ n \in \mathbb{N} $ is the smallest $n$ such that $ Im F^{m} $ is closed for all $ m \geq n , P_{Im F^{n} }$ is not finite and $ F P_{Im F^{n}} \in \mathcal{K} \Phi_{+} (P_{Im F^{n}} \mathcal{A}  P_{Im F^{n}} ) ,$  then $ P_{Im F^{m} }$ is not finite and $ F P_{Im F^{m}} \in \mathcal{K} \Phi_{+} (P_{Im F^{m}} \mathcal{A}  P_{Im F^{m}} ) ,$  for all $ m \geq n .$ The analogue statement holds for lower semi-$\mathcal{A} -$ B- Fredholm operators. Finally, if $F$ is $\mathcal{A} -$ B- Fredholm operator, $ n \in \mathbb{N} $ is the smallest $n$ such that $ Im F^{m} $ is closed for all $ m \geq n , P_{Im F^{n} }$ is not finite and $ F P_{Im F^{n}} \in \mathcal{K} \Phi (P_{Im F^{n}} \mathcal{A}  P_{Im F^{n}} ) ,$  then $ P_{Im F^{m} }$ is not finite, $ F P_{Im F^{m}} \in \mathcal{K} \Phi (P_{Im F^{m}} \mathcal{A}  P_{Im F^{m}} ) ,$  for all $ m \geq n $ and $ index \text{ } FP_{Im F^{m}} = index \text{ } FP_{Im F^{n}} $ for all $ m \geq n .$
	\end{proposition}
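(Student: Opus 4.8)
The plan is to prove all three assertions simultaneously by induction on $m \geq n$, where the passage from level $m$ to level $m+1$ is nothing but an application of Lemma \ref{r12 l17} carried out \emph{inside} the compressed von Neumann algebra $\mathcal{A}_{m} := P_{Im F^{m}} \mathcal{A} P_{Im F^{m}}$. The base case $m = n$ is exactly the hypothesis. To set up the reduction, I would first observe that for every $m \geq n$ one has $Im F^{m+1} = F(Im F^{m}) \subseteq Im F^{m}$, so $F$ restricts to an operator on $Im F^{m}$ and the element $G_{m} := F P_{Im F^{m}}$ satisfies $P_{Im F^{m}} G_{m} P_{Im F^{m}} = G_{m}$, i.e. $G_{m} \in \mathcal{A}_{m}$; as an operator on the Hilbert space $Im F^{m}$ it acts as the restriction $F|_{Im F^{m}}$.

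The key computation is that, regarded as an element of $\mathcal{A}_{m}$, the operator $G_{m}$ satisfies $Im G_{m} = Im F^{m+1}$ and $Im G_{m}^{2} = Im F^{m+2}$, both of which are closed as subspaces of $Im F^{m}$ for $m \geq n$ by hypothesis, so the closedness requirements of Lemma \ref{r12 l17} are met for $G_{m}$ in $\mathcal{A}_{m}$. Moreover, since $Im F^{m+1} \subseteq Im F^{m}$, the projection $P_{Im G_{m}}$ coincides with $P_{Im F^{m+1}}$, the iterated compression $P_{Im G_{m}} \mathcal{A}_{m} P_{Im G_{m}}$ equals $\mathcal{A}_{m+1}$, and $G_{m} P_{Im G_{m}} = F P_{Im F^{m+1}} = G_{m+1}$. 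Thus Lemma \ref{r12 l17}, applied to $G_{m}$ inside $\mathcal{A}_{m}$, converts the statement at level $m$ directly into the statement at level $m+1$.

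Carrying out the induction, I assume $P_{Im F^{m}}$ is not finite and $G_{m} \in \mathcal{K}\Phi_{+}(\mathcal{A}_{m})$. Then $\mathcal{A}_{m}$ is properly infinite, exactly as argued in the proof of Lemma \ref{r12 l17}, so part (1) of that lemma applies and gives $G_{m+1} = F P_{Im F^{m+1}} \in \mathcal{K}\Phi_{+}(\mathcal{A}_{m+1})$. The non-finiteness of $P_{Im F^{m+1}}$ follows from the same reasoning used in that proof: $G_{m}$ being upper semi-Fredholm forces $P_{\ker G_{m}}$ to be finite while $P_{\ker G_{m}^{\perp}} \sim P_{Im F^{m+1}}$, so in the properly infinite $\mathcal{A}_{m}$ the projection $P_{Im F^{m+1}}$ cannot be finite; here I use that finiteness in $\mathcal{A}_{m}$ agrees with finiteness in $\mathcal{A}$, by the remark preceding Lemma \ref{r12 l17}. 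This settles the upper semi case, and the lower semi case is identical via part (2). For the $\mathcal{A}$-B-Fredholm case I use part (3), which in addition gives $index\, G_{m} = index\, G_{m}P_{Im G_{m}} = index\, G_{m+1}$ at each step; chaining these equalities from $n$ up to $m$ yields $index\, F P_{Im F^{m}} = index\, F P_{Im F^{n}}$ for all $m \geq n$.

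The main obstacle I expect is the bookkeeping of the operator-algebraic identifications rather than any deep new idea: one must check carefully that $Im G_{m}^{2} = Im F^{m+2}$ and that the compression $P_{Im F^{m+1}} \mathcal{A}_{m} P_{Im F^{m+1}}$ genuinely coincides with $\mathcal{A}_{m+1}$, and one must maintain throughout the induction that each $\mathcal{A}_{m}$ stays properly infinite so that Lemma \ref{r12 l17} is applicable at every stage. Once these identifications are in place, all three statements drop out of the induction described above.
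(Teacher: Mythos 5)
Your proposal is correct and follows essentially the same route as the paper, whose entire proof is the one-line instruction to apply Lemma \ref{r12 l17} to $FP_{Im F^{n}}$ and proceed inductively; you have simply made explicit the identifications ($Im G_{m}=Im F^{m+1}$, $Im G_{m}^{2}=Im F^{m+2}$, $P_{Im G_{m}}\mathcal{A}_{m}P_{Im G_{m}}=\mathcal{A}_{m+1}$, $G_{m}P_{Im G_{m}}=G_{m+1}$) and the propagation of proper infiniteness that the paper leaves implicit. All of these checks are sound, so no further changes are needed.
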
 
\begin{proof}
	By applying Lemma \ref{r12 l17} on the operator $ F P_{ Im F^{n}} $ and proceeding inductively, we deduce the statements in the proposition. 
\end{proof}

For an $\mathcal{A}$-$B$-Fredholm operator $F$ we set $\text {\rm index } F= \text {\rm index } { FP_{  Im F^{n }  }} ,$ where $n$ is the smallest $n$ such that $Im F^{m}$ is closed for all $m \geq n$ and such that $ FP_{  Im F^{n }  } \in  \mathcal{K} \Phi (P_{Im F^{n}} \mathcal{A}  P_{Im F^{n}} ) .$ \\

The next theorem is an analogue of \cite[Theorem 8]{IS5} in the setting of $\mathcal{A}$-Fredholm operators. 
\begin{theorem}\label{finitepeturb}
	 Let $T$ be an $\mathcal{A}$-$B$-Fredholm operator  and suppose that $m \in \mathbb{N}$ is the smallest $m$ such that $TP_{ Im T^{m}}  $ is an $\mathcal{A}$-Fredholm operator, $Im T^{n}  $ is closed for all $ n \geq m $ and $P_{ Im T^{m}} $ is not finite. Let $F$ be a finite operator and suppose that $ Im (T+F)^{n}  $ is closed for all $n \geq m   .$  Then $T+F$ is an $\mathcal{A}$-$B$-Fredholm operator and $\text {\rm index }  (T+F)=\text {\rm index }  T.$
\end{theorem}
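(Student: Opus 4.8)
The plan is to exhibit $n=m$ as a witness that $T+F$ is $\mathcal{A}$-$B$-Fredholm and then to compute its index by reducing the whole problem, inside a single compressed algebra, to the ordinary finite-perturbation theory of $\mathcal{A}$-Fredholm operators. Write $Q=P_{Im T^{m}}$ and $P=P_{Im(T+F)^{m}}$; these are genuine projections in $\mathcal{A}$ because $Im T^{m}$ (as $m\ge m$) and $Im(T+F)^{m}$ (by hypothesis) are closed. First I would record that $G:=(T+F)^{m}-T^{m}$ is a \emph{finite} operator, since $F$ is finite and the finite operators form a two-sided ideal, so every term in the expansion of $(T+F)^{m}$ that contains a factor $F$ is finite. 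From $Im T^{m}\subseteq\overline{Im(T+F)^{m}}+\overline{Im G}$ and the symmetric inclusion one gets $Q\le P\vee P_{\overline{Im G}}$ and $P\le Q\vee P_{\overline{Im G}}$. Since the join of two finite projections is finite and $Q$ is not finite by hypothesis, the first inclusion forces $P$ to be non-finite as well; hence $P\mathcal{A}P$ is again properly infinite, and $(T+F)P=P(T+F)P$ is a bona fide element of it, because $T+F$ carries $Im(T+F)^{m}$ into $Im(T+F)^{m+1}\subseteq Im(T+F)^{m}$.

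The heart of the matter is that $(T+F)P$ and $TQ$ live over the two \emph{different} subspaces $Im(T+F)^{m}$ and $Im T^{m}$, hence in two different compressed algebras, so they cannot be compared directly. My plan is to pass to the common algebra $\mathcal{B}=R\mathcal{A}R$ with $R=P\vee Q$, which is still properly infinite since $R\ge Q$ is not finite. From $P\le Q\vee P_{\overline{Im G}}$ and $Q\le P\vee P_{\overline{Im G}}$ one gets $R\le Q\vee P_{\overline{Im G}}$ and $R\le P\vee P_{\overline{Im G}}$, whence by Kaplansky's formula $R-Q\le(Q\vee P_{\overline{Im G}})-Q\preceq P_{\overline{Im G}}$ and similarly $R-P\preceq P_{\overline{Im G}}$, so both $R-P$ and $R-Q$ are finite. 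Consequently $P-Q=(R-Q)-(R-P)$ yields a finite operator, and $T(P-Q)$ and $FP$ are finite. I then pad the two reduced operators to $\mathcal{B}$ by setting $\widehat{T}_{Q}=QTQ+(R-Q)$ and $\widehat{T}_{P}=P(T+F)P+(R-P)$: padding by the identity on a finite projection affects neither $\mathcal{A}$-Fredholmness nor the index, so $\widehat{T}_{Q}$ is $\mathcal{A}$-Fredholm in $\mathcal{B}$ with the same index as $TQ$ in $Q\mathcal{A}Q$. A direct computation gives $\widehat{T}_{P}-\widehat{T}_{Q}=\bigl(P(T+F)P-QTQ\bigr)+(Q-P)$, and since $P(T+F)P-QTQ=PT(P-Q)+(P-Q)TQ+PFP$ is finite and $Q-P$ is finite, the operator $\widehat{T}_{P}-\widehat{T}_{Q}$ is finite in $\mathcal{B}$.

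Now I invoke the stability of Breuer's $\mathcal{A}$-Fredholm class and of its index under perturbation by finite operators, a standard fact of the theory (\cite{BR},\cite{BR2},\cite{KL}; it also follows from Lemma~\ref{r12 l11} together with the ideal property of the finite operators): because $\widehat{T}_{Q}$ is $\mathcal{A}$-Fredholm in $\mathcal{B}$ and $\widehat{T}_{P}-\widehat{T}_{Q}$ is finite, $\widehat{T}_{P}$ is $\mathcal{A}$-Fredholm in $\mathcal{B}$ with $\text{index}\,\widehat{T}_{P}=\text{index}\,\widehat{T}_{Q}$. Stripping off the finite padding $R-P$ shows that $(T+F)P$ is $\mathcal{A}$-Fredholm in $P\mathcal{A}P$ with $\text{index}\bigl((T+F)P\bigr)=\text{index}\,T$, under the canonical identifications $I(P\mathcal{A}P)\cong I(R\mathcal{A}R)\cong I(Q\mathcal{A}Q)\cong I(\mathcal{A})$ furnished by the non-finite projections. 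Combined with the hypothesis that $Im(T+F)^{n}$ is closed for all $n\ge m$ and with the non-finiteness of $P$ from the first step, this shows that $n=m$ witnesses $T+F$ as an $\mathcal{A}$-$B$-Fredholm operator, and Proposition~\ref{bf1} then guarantees consistency for all $n\ge m$ and that $\text{index}(T+F)=\text{index}\bigl((T+F)P\bigr)=\text{index}\,T$. The main obstacle is exactly the reconciliation of the two reduced algebras $P\mathcal{A}P$ and $Q\mathcal{A}Q$; everything hinges on the commensurability of $P$ and $Q$ (each co-finite in $R$) extracted from the finiteness of $(T+F)^{m}-T^{m}$, which simultaneously forces $P$ to be non-finite and makes the two reduced operators differ by a finite operator once lifted to $R\mathcal{A}R$. (Alternatively, the index equality could be derived from the additivity encoded in the exact sequence of Lemma~\ref{r12 l15}, but the perturbation route above is more direct.)
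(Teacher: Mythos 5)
Your argument is correct, and it reaches the conclusion by a genuinely different route from the paper. The paper's proof sets $\tilde F=(T+F)^{m}-T^{m}$ and, following \cite[Theorem 8]{IS5}, produces an explicit common closed subspace $\overline{Im\, T^{m}P_{\ker\tilde F}}$ contained in both $Im\, T^{m}$ and $Im\,(T+F)^{m}$, whose complements $N,N'$ in these two ranges have finite projections (being ranges of compressions of $T^{m}$, resp.\ $(T+F)^{m}$, by the finite projection $P_{\ker\tilde F^{\perp}}$, cf.\ Lemma \ref{r12 l13}); it then shuttles Fredholmness and the index between $P_{Im\, T^{m}}\mathcal{A}P_{Im\, T^{m}}$ and $P_{Im\,(T+F)^{m}}\mathcal{A}P_{Im\,(T+F)^{m}}$ through the compression to that common subspace, invoking \cite[Corollary 8]{BJMA} twice and finite-perturbation stability once. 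You never construct a common subspace: you work entirely at the level of range projections, deducing from $Q\le P\vee P_{\overline{Im\, G}}$, $P\le Q\vee P_{\overline{Im\, G}}$ and Kaplansky's parallelogram law that $P$ and $Q$ are each co-finite in $R=P\vee Q$ (which simultaneously gives the non-finiteness of $P$), then padding both compressions up to $R\mathcal{A}R$ and applying finite-perturbation stability there. Both proofs rest on the same two external facts --- stability of the Breuer--Fredholm class and its index under perturbation by elements of $\mathfrak{m}$, and invariance of Fredholmness and index under compression by a co-finite projection (\cite[Corollary 8]{BJMA}), the latter being exactly what legitimizes your padding and stripping steps. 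What your lattice-theoretic route buys is independence from the decomposition machinery of \cite{IS5}; the price is one extra intermediate algebra $R\mathcal{A}R$ and hence one more implicit identification of index groups across compressed algebras --- an identification the paper's own proof also makes silently, so you are not being held to a higher standard there.
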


\begin{proof}
	Let $ \tilde F = (T+F)^{m} - T^{m}.$ Then $P_{\ker \tilde F} \in \mathcal{A} ,$ which gives that $ P_{ \overline{ Im T^{m} P_{\ker \tilde F}} } \in \mathcal{A} .$ Now, by Lemma \ref{r12 l13} $ P_{ \overline{ Im \tilde F} } \in Proj_{0} (\mathcal{A}) ,$ hence $ P_{\ker \tilde F ^ { \perp }} \in Proj_{0} (\mathcal{A}) $ since $ P_{\ker \tilde F^{ \perp }} \sim  P_{ \overline{ Im \tilde F} } .$ As in the proof of \cite[Theorem 8]{IS5}  we can write  $ Im T^{m} $ and $ Im (T+F)^{m} $ as $$ Im T^{m} = \overline{ Im T^{m} P_{\ker \tilde F} } \oplus N, Im (T+F)^{m} = \overline{ Im T^{m} P_{\ker \tilde F} } \oplus N^{\prime} , $$ for some closed subspaces $ N, N^{\prime} ,$  so $ P_{N} , P_{N^{\prime}} \in \mathcal{A} $ since they are difference of projections that belong to $ \mathcal{A} .$ Then, again by the proof of \cite[Theorem 8]{IS5}, we obtain that 
	 $$ N = Im P_{ \overline{ Im T^{m} P_{\ker \tilde F}} ^{ \perp }} T^{m} P_{\ker \tilde F^{ \perp }}, N^{\prime} = Im P_{ \overline{ Im T^{m} P_{\ker \tilde F}} ^{ \perp }} (T+F )^{m} P_{\ker \tilde F^{ \perp }} .$$
	  Since $ P_{\ker \tilde F^{ \perp }} \in Proj_{0} (\mathcal{A}) ,$ we must have that $ P_{N} , P_{N^{\prime}} \in Proj_{0} (\mathcal{A}) $ by Lemma \ref{r12 l13}. Observe that since $P_{ Im T^{m} } $ is not finite and $ P_{N} $ is finite, we must have that $ P_{ \overline{ Im T^{m} P_{\ker \tilde F}} } $ is properly infinite. Hence, $ P_{Im (T+F)^{m}} $  is properly infinite because $ P_{ \overline{ Im T^{m} P_{\ker \tilde F}} } \leq  P_{Im (T+F)^{m}} . $ Since $ T^{m} P_{ Im T^{m} }\in \mathcal{K}\Phi (  P_{ Im T^{m} } \mathcal{A}  P_{ Im T^{m} } ) ,$  by \cite[Corollary 8]{BJMA} we have that $$ P_{ \overline{ Im T^{m} P_{\ker \tilde F}} } T P_{ \overline{ Im T^{m} P_{\ker \tilde F}} } \in \mathcal{K}\Phi ( P_{ \overline{ Im T^{m} P_{\ker \tilde F}} } \mathcal{A} P_{ \overline{ Im T^{m} P_{\ker \tilde F}} } ) $$ since $ P_{ \overline{ Im T^{m} P_{\ker \tilde F}} } = P_{ Im T^{m} } - P_{N} $ and $ P_{N} \in Proj_{0} (\mathcal{A}) .$ In addition, $$ \text {\rm index } T  P_{ Im T^{m} } = \text {\rm index } P_{ \overline{ Im T^{m} P_{\ker \tilde F}} } T P_{ \overline{ Im T^{m} P_{\ker \tilde F}} } .$$ Next, $ P_{ \overline{ Im T^{m} P_{\ker \tilde F}} } F P_{ \overline{ Im T^{m} P_{\ker \tilde F}} } $ is a finite operator, hence $$ P_{ \overline{ Im T^{m} P_{\ker \tilde F}} } (T+F ) P_{ \overline{ Im T^{m} P_{\ker \tilde F}} } \in \mathcal{K}\Phi ( P_{ \overline{ Im T^{m} P_{\ker \tilde F}} } \mathcal{A} P_{ \overline{ Im T^{m} P_{\ker \tilde F}} } ) $$ and $$ \text {\rm index } P_{ \overline{ Im T^{m} P_{\ker \tilde F}} } T P_{ \overline{ Im T^{m} P_{\ker \tilde F}} } = \text {\rm index } P_{ \overline{ Im T^{m} P_{\ker \tilde F}} } (T + F ) P_{ \overline{ Im T^{m} P_{\ker \tilde F}} } .$$ Now,  since $ Im (T+F)^{m} = \overline{ Im T^{m} P_{\ker \tilde F} } \oplus N^{\prime} ,$ we have that  $ P_{Im (T+F)^{m}} P_{ \overline{ Im T^{m} P_{\ker \tilde F}} } = P_{ \overline{ Im T^{m} P_{\ker \tilde F}} } .$ Therefore, $$P_{ \overline{ Im T^{m} P_{\ker \tilde F}} } (T + F ) P_{ \overline{ Im T^{m} P_{\ker \tilde F}} } = P_{ \overline{ Im T^{m} P_{\ker \tilde F}} } (T + F ) P_{Im (T+F)^{m}} P_{ \overline{ Im T^{m} P_{\ker \tilde F}} } =$$ $$ P_{ \overline{ Im T^{m} P_{\ker \tilde F}} } P_{Im (T+F)^{m}} (T + F ) P_{Im (T+F)^{m}} P_{ \overline{ Im T^{m} P_{\ker \tilde F}} } .$$ Moreover, since $ P_{N^{\prime}} \in Proj_{0} (\mathcal{A}) ,$ by \cite[Corollary 8]{BJMA} we have that  $$ (T + F ) P_{Im (T+F)^{m}} \in \mathcal{K}\Phi (  P_{ Im (T+F)^{m} } \mathcal{A}  P_{ Im (T+F)^{m} } ) .$$ In addition,  $$ \text {\rm index } (T + F ) P_{Im (T+F)^{m}} = \text {\rm index } P_{ \overline{ Im T^{m} P_{\ker \tilde F}} } T P_{ \overline{ Im T^{m} P_{\ker \tilde F}} } = \text {\rm index } T  P_{ Im T^{m} } .$$
	\end{proof}

\begin{remark}
	We notice that, in fact, by Proposition \ref{bf1}, it suffices to only assume in Theorem \ref{finitepeturb} that there exists some $N \geq m$ such that  $ Im (T+F)^{n}  $ is closed for all $n \geq N .$
\end{remark}	
Finally, the next proposition can be proved in a similar way as \cite[Proposition 4.3]{IS13}.

\begin{proposition}          
	Let  $F,D \in \mathcal{A} $ satisfying that $FD=DF.$ Suppose that there exists an $n \in \mathbb{N} $ such that $Im (DF)^{m}$ is closed for all $m \geq n $ and in addition for each $m \geq n  $ we have that $Im F^{m+1}D^{m} $ and $Im D^{m+1}F^{m} $ are closed. If $F$ and $D$ are upper (lower) semi-$\mathcal{A}$-$B$-Fredholm, then $DF$ is upper (lower) semi-$\mathcal{A}$-$B$-Fredholm. If $F$ and $D$ are $\mathcal{A}$-$B$-Fredholm, then $DF$ is $\mathcal{A}$-$B$-Fredholm and $\text {\rm index }  DF=\text {\rm index }  D+\text {\rm index }  F.$ 
\end{proposition}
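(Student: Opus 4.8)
The plan is to reduce the semi-$B$-Fredholm and index statements about $DF$ to ordinary $\mathcal{A}$-Fredholm statements on the compression to the common stable range, and then to invoke the composition Lemma \ref{r12 l15}. First I would use Proposition \ref{bf1} to fix an $m \geq n$ large enough that $FP_{Im F^{m}} \in \mathcal{K}\Phi_{+}(P_{Im F^{m}}\mathcal{A}P_{Im F^{m}})$ and $DP_{Im D^{m}} \in \mathcal{K}\Phi_{+}(P_{Im D^{m}}\mathcal{A}P_{Im D^{m}})$ (with the corresponding two-sided statements in the Fredholm case), and that all the ranges in the hypotheses are closed for this $m$. Put $N = Im(DF)^{m}$. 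Since $FD = DF$ we have $(DF)^{m} = F^{m}D^{m} = D^{m}F^{m}$, so $N = F^{m}(Im D^{m}) = D^{m}(Im F^{m})$; in particular $N \subseteq Im F^{m} \cap Im D^{m}$ and $N$ is invariant under both $F$ and $D$, so $FP_{N} = P_{N}FP_{N}$ and $DP_{N} = P_{N}DP_{N}$. The closedness of $Im F^{m+1}D^{m}$, $Im D^{m+1}F^{m}$ and $Im(DF)^{m}$ guarantees that $F(N)$, $D(N)$ and $(DF)(N) = Im(DF)^{m+1}$ are closed with projections in $\mathcal{A}$. Using the $D$-invariance of $N$ together with $FD = DF$, one checks the factorisation $(DF)P_{N} = (FP_{N})(DP_{N})$ in the reduced algebra $P_{N}\mathcal{A}P_{N}$.

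The technical core is the assertion that $\ker D^{m}\cap Im F^{m}$ and $\ker F^{m}\cap Im D^{m}$ are finite projections; these projections lie in $\mathcal{A}$ by Remark \ref{rem101}, and I expect to prove finiteness from the $B$-Fredholm structure of $D$ and $F$ together with $FD = DF$, as in \cite[Proposition 4.3]{IS13}. Granting this, consider the bounded surjection $D^{m}\colon Im F^{m}\to N$. Its kernel $\ker D^{m}\cap Im F^{m}$ is finite and invariant under $F$ (if $x \in Im F^{m}$ and $D^{m}x=0$ then $Fx \in Im F^{m}$ and $D^{m}Fx = FD^{m}x = 0$), and since $D^{m}F = FD^{m}$ it intertwines $F$ on $Im F^{m}$ with $F$ on $N$. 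Hence $FP_{N}$ is identified, up to a compression on a finite projection, with $FP_{Im F^{m}}$; this forces $P_{N}$ not to be finite (so that $P_{N}\mathcal{A}P_{N}$ is properly infinite), transfers the upper (lower) semi-Fredholm property from $FP_{Im F^{m}}$ to $FP_{N}$, and gives $\operatorname{index}(FP_{N}) = \operatorname{index}(FP_{Im F^{m}}) = \operatorname{index}F$. Symmetrically, using $F^{m}\colon Im D^{m}\to N$, one obtains $DP_{N} \in \mathcal{K}\Phi_{+}(P_{N}\mathcal{A}P_{N})$ (respectively the lower or two-sided version) and $\operatorname{index}(DP_{N}) = \operatorname{index}D$.

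Finally I would feed this into Lemma \ref{r12 l15}, applied in $P_{N}\mathcal{A}P_{N}$ to the product $(DF)P_{N} = (FP_{N})(DP_{N})$ with outer factor $FP_{N}$ and inner factor $DP_{N}$: the exactness of the associated six-term sequence shows that $(DF)P_{N}$ is upper (lower) semi-Fredholm whenever $FP_{N}$ and $DP_{N}$ are, and in the two-sided case it yields $\operatorname{index}((DF)P_{N}) = \operatorname{index}(FP_{N}) + \operatorname{index}(DP_{N})$. Since $P_{N}$ is not finite and $Im(DF)^{k}$ is closed for all $k \geq m$, this says exactly that $DF$ is upper (lower) semi-$\mathcal{A}$-$B$-Fredholm, respectively $\mathcal{A}$-$B$-Fredholm, and by the definition of the $B$-Fredholm index together with Proposition \ref{bf1} we conclude $\operatorname{index}(DF) = \operatorname{index}((DF)P_{N}) = \operatorname{index}F + \operatorname{index}D$. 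I expect the finiteness of $\ker D^{m}\cap Im F^{m}$ and $\ker F^{m}\cap Im D^{m}$ to be the main obstacle, since everything else — the factorisation, the intertwining transfer of the semi-Fredholm property and index, and the composition step — is then routine; these finiteness facts are where the commutation relation and the closedness hypotheses on $Im F^{m+1}D^{m}$ and $Im D^{m+1}F^{m}$ must be used in earnest.
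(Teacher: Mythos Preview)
The paper gives no proof of this proposition beyond the sentence that it ``can be proved in a similar way as \cite[Proposition 4.3]{IS13}'', so there is no in-paper argument to compare your proposal against; I can only comment on correctness.

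Your overall plan --- pass to $N=Im(DF)^{m}$, show that $FP_{N}$ and $DP_{N}$ lie in the right semi-Fredholm class of $P_{N}\mathcal{A}P_{N}$, and then apply Lemma \ref{r12 l15} to the product --- is the natural one.  The difficulty is that the step you yourself flag as the technical core is false: $P_{\ker D^{m}\cap Im F^{m}}$ need \emph{not} be finite.  Take $F=I$ and let $D\in\mathcal{A}$ be any projection with both $D$ and $I-D$ properly infinite.  Then $D$ is $\mathcal{A}$-$B$-Fredholm (indeed $DP_{Im D}$ is the identity on $Im D$), all closedness hypotheses hold trivially, yet $\ker D^{m}\cap Im F^{m}=\ker D=(I-D)(H)$ is not finite.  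Consequently your intertwining argument via the surjection $D^{m}\colon Im F^{m}\to N$ collapses: the kernel of this map is not finite, so you cannot identify $FP_{N}$ with $FP_{Im F^{m}}$ ``up to a compression on a finite projection'', and neither the semi-Fredholm transfer nor the index equality $\text{index}(FP_{N})=\text{index}F$ follows from it.

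For the semi-Fredholm assertions there is a direct route that avoids this obstacle.  Since $N\subseteq Im F^{m}$ one has $\ker F\cap N\subseteq \ker F\cap Im F^{m}$, and the right-hand projection is finite by Lemma \ref{r12 l16} because $FP_{Im F^{m}}\in\mathcal{K}\Phi_{+}$; together with the hypothesis that $F(N)=Im F^{m+1}D^{m}$ is closed, Lemma \ref{r12 l16} already gives $FP_{N}\in\mathcal{K}\Phi_{+}(P_{N}\mathcal{A}P_{N})$, and symmetrically for $D$.  Then Lemma \ref{r12 l15} in $P_{N}\mathcal{A}P_{N}$ handles the product.  The lower case is dual, using that $N\ominus F(N)$ is the image under a bounded map of the finite space $Im F^{m}\ominus Im F^{m+1}$.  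What your proposal does not correctly establish, and what genuinely requires the argument of \cite[Proposition 4.3]{IS13}, is (i) that $P_{N}$ is not finite --- your reasoning for this also rested on the false finiteness claim --- and (ii) the index identity $\text{index}(FP_{N})=\text{index}F$.  Note in particular that (i) is delicate under the paper's definition (which \emph{requires} $P_{Im(DF)^{n}}$ not to be finite): if $F$ and $D$ are mutually orthogonal infinite projections then $DF=0$ and $N=\{0\}$, so some care with the standing hypotheses is needed here.
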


	\section{Disjoint Furstenberg transitivity in Banach bimodules}
The main purpose of this section is to extend the results from \cite{AOFA, arxiv} to some new classes of Banach bimodules that have not been considered in \cite{AOFA, arxiv}. 
The approach in this section is strongly motivated by the approach in \cite{AOFA, arxiv}, however, it provides also certain modifications of the approach in \cite{AOFA, arxiv}. The modified approach which we be presented in this section is  adopted for some new classes of Banach bimodules. 

We recall first the following definition.
\begin{definition}\label{glavna-definicija} \cite[Definition 4.1]{filomat}
	Let $\mathcal{S}$ be a set, $\mathcal{F}$ be a family of subsets of $\mathcal{S} $  such that $ \varnothing \notin \mathcal{F} $  
	and let $$\{ T_{t,1} \}_{t \in \mathcal{S}}, \dots,
	\{ T_{t,N} \}_{t \in \mathcal{S}}$$ be families of bounded linear operators on a Banach space $X$.  
	We say that $\{ T_{t,1} \}_{t \in \mathcal{S}}, \dots, \{ T_{t,N} \}_{t \in \mathcal{S}}$ are disjoint $\mathcal{F}$-semi-transitive, or shortly $d\mathcal{F}$-semi-transitive, if  
	for every collection of non-empty open subsets $\mathcal{O}, V_{1}, \dots, V_{N}$ of $X$,  
	there exists some $ F \in \mathcal{F} $  such that for all  $ t \in F$ there exists some $  \lambda_{t} \in \mathbb{R}^+  $  satisfying that 
	$$
	\mathcal{O} \cap \lambda_{t} T_{t,1}^{-1} (V_{1}) \cap \dots \cap \lambda_{t} T_{t,N}^{-1}(V_{N}) \neq \varnothing.
	$$
	
\end{definition}

For remarks on the relations between disjoint supercyclicity and disjoint disjoint $\mathcal{F}$-semi-transitivity, we refer to \cite{AOFA}.

Let now $\mathcal{M}$ be a Banach bimodule over a unital normed algebra $\mathcal{A}_1$.
We will say in this case that $\mathcal{M}$ satisfies \textit{ the condition (E) } with respect to $\mathcal{A}_1$ if there exists a constant $ K> 0$ such that  $$ \| ba \| \leq K \| b \|_1 \| a \|, $$for all $ a \in \mathcal{M}$ and $ b \in \mathcal{A}_1 $ where $ \| \cdot \|_1 $ denotes the norm on $\mathcal{A}_1$.

\text{ }

Also, we will assume that there exists a set  $\{p_\alpha\}_\alpha$  in $ \mathcal{A}_1 $ satisfying that given any open subset $O$ of $\mathcal{M}$ there exists some $ x \in \mathcal{M} $ and some  $ p_{\alpha_0} \in \{p_\alpha\}_\alpha$ such that  $ xp_{\alpha_0}^{2} \in  O.$ This condition will be called the \textit{condition (P*)}.

\text{ }

We recall also the following conditions from \cite{AOFA, arxiv}:

For a set $\mathcal{S},$ let $\mathcal{F}$ be a family of subsets of $\mathcal{S}$ such that $ \varnothing \notin \mathcal{F} $, and let
$$
\{\Phi_{t,1}\}_{t\in \mathcal{S}},\ \{\Phi_{t,2}\}_{t\in \mathcal{S}},\ \ldots,\ \{\Phi_{t,N}\}_{t\in \mathcal{S}}
$$
be families of isometric algebra isomorphisms of $\mathcal{A}_1,$ and $$
\{\Psi_{t,1}\}_{t\in \mathcal{S}},\ \{\Psi_{t,2}\}_{t\in \mathcal{S}},\ \ldots,\ \{\Psi_{t,N}\}_{t\in \mathcal{S}}
$$ be families of bounded linear isomorphisms of $ \mathcal{A} $ such that for all $ x, y \in \mathcal{A}_1 $ and $ a \in \mathcal{M} $ it holds that $ \Psi_{t,j}(xay) = \Phi_{t,j}(x)\Psi_{t,j}(a) \Phi_{t,j}(y) $   for all $t\in \mathcal{S} $ and $j \in \lbrace 1,\dots , N \rbrace.$ We will assume that the system
$$
\{\Phi_{t,1}\}_{t\in \mathcal{S}},\ \{\Phi_{t,2}\}_{t\in \mathcal{S}},\ \ldots,\ \{\Phi_{t,N}\}_{t\in \mathcal{S}}
$$
is disjoint aperiodic with respect to $\{p_\alpha\}_\alpha$, that is for each fixed $p_{\alpha}$ and every $H\in \mathcal{F}$ there exist some $F \subseteq H$ with $F \in \mathcal{F}$ such that
$$
p_{\alpha} \, \Phi_{t,l}(p_{\alpha}) = 0, 
\quad 
p_{\alpha} \, \Phi_{t,l}^{-1}(p_{\alpha}) = 0, 
\quad \text{and} \quad
p_{\alpha} \, \Phi_{t,l}\!\big( \Phi_{t,r}^{-1}(p_{\alpha}) \big) = 0
\quad \text{for all }  t \in F
$$
$ \text{ and all } r, l \in \{1, \dots, N\} \text{ with } r \neq l.
$
Further, we will assume that for each $\alpha$ and all $t \in \mathcal{S}$, 
$r, l \in \{1, \dots, N\}$, and $a \in \mathcal{M}$, 
it holds that
$$
\| a \, p_{\alpha} \| \le \| a \|, 
\quad 
\| a \, \Phi_{t,l}(p_{\alpha}) \| \le \| a \|,
$$
$$
\| a \, \Phi_{t,l}^{-1}(p_{\alpha}) \| \le \| a \|,
\quad 
\quad 
\| a \, \Phi_{t,l}\big(\Phi_{t,r}^{-1}(p_{\alpha})\big) \| \le \| a \|. 
$$
This condition will be called \textit{ the condition (R) } throughout this section.

For each $r,l \in \lbrace 1, \dots ,N \rbrace$ we let $\lbrace \parallel \cdot \parallel_{(t,l)} \rbrace_{t \in \mathcal{S}} $ and $\lbrace \parallel \cdot \parallel_{(t,l,-1)} \rbrace_{t \in \mathcal{S}} $ of be the families of norms 
on $\mathcal{M}$ such that  
$$ \parallel \Psi_{t,l} (a) \parallel_{(t,l)} = \|a\| $$ 
and  $$ \parallel \Psi_{t,l}^{-1} (a) \parallel_{(t,l,-1)} = \|a\| $$
for each $  a \in \mathcal{M}.$ Also, for each pair of distinct $r,l \in \lbrace 1 , \dots , N \rbrace$ we let $$\lbrace \parallel \cdot \parallel_{(t,r,l)} \rbrace_{t \in \mathcal{S}} $$ be families
of norms on $\mathcal{M}$ such that
\[
\left\|
\Psi_{t,l}
\left(
\Psi_{t,r}^{-1}(a)
\right)
\right\|_{\lbrace t,r,l\rbrace}
=
\|a\|
\]
for all $a\in\mathcal{M}$.

We now set up families 
$\{ b_{t,1} \}_{t \in \mathcal{S}}, \cdots,\{ b_{t,N} \}_{t \in \mathcal{S}}$
of elements in $\mathcal{A}_1$, 
and for each $t \in \mathcal{S}$ and $l \in \{1, \dots, N\}$, 
we define the operator
$$
T_{t,l} : \mathcal{M} \to \mathcal{M}
\quad \text{by} \quad
T_{t,l}(a) = b_{t,l} \, \Phi_{t,l}(a).
$$

Due to the condition $(E)$, 
it is not hard to check that $T_{t,l}$ 
is a bounded linear operator on $\mathcal{M}$.  
Further, we will assume that there exist a greater algebra $ \tilde{\mathcal{A} } $ such that $\mathcal{A}_1 \subset \tilde{\mathcal{A} }$ and such that for each index $ \alpha $ and every $  t \in \mathcal{S}, l \in \lbrace 1,\dots , N \rbrace$ there exists some $  b_{t,l, \alpha}^{-1} \in \tilde{\mathcal{A} } $  satisfying that $ b_{t,l, \alpha}^{-1} b_{t,l} a p_\alpha = b_{t,l}b_{t,l, \alpha}^{-1}a p_\alpha=a p_\alpha$ for every $ a \in \mathcal{M}.$ The algebra $\tilde{\mathcal{A} } $ does not need to be equipped with a norm, and $ \mathcal{M} $ does not need to be a Banach bimodule over $\tilde{\mathcal{A} } $ (but just over $ \mathcal{A}_1 $). Also, we will assume that $  b_{t,l, \alpha}^{-1} b_{t,l} \in \mathcal{A}_1 $ for each index $ \alpha $ and every $  t \in \mathcal{S}, l \in \lbrace 1,\dots , N \rbrace.$ 

Finally, we will assume that for each $ t \in \mathcal{S} $ and $l \in \{1, \dots, N\}$ it holds that   $ b_{t,l} a = 0 $ if and only if $ a = 0.$ Such condition will be called \textit{the condition (C)} throughout the manuscript.

By this new approach, we are now able to provide the following characterization.
\begin{theorem}\label{algebra-mere}
	Under the above notation and assumptions, the following statements are equivalent.
	\begin{enumerate}
		\item The families $\{ T_{t,1} \}_{t \in \mathcal{S}}, \cdots,\{ T_{t,N} \}_{t \in \mathcal{S}}$ are $d \mathcal{F}$-semi-transitive.
		
		\item For each fixed $ p_\alpha ,$ every fixed $ u,v_1, \cdots, v_N \in \mathcal{M} $  and each fixed $ \varepsilon > 0 $ there exists some 
		$ F \in  \mathcal{F} $ and families $\{ d_t \}_{t \in F}$, $ \{ g_{t,1} \}_{t \in F} \dots , \{ g_{t,N} \}_{t \in F} $ in $ \mathcal{M} $
		such that $ g_{t,l} \in b_{t,l}\mathcal{M}p_\alpha $ for all $ t \in F, l \in \lbrace 1,\dots , N \rbrace,$ and
		$$
		\|d_t - u p_\alpha^2   \| < \varepsilon, \quad    \|g_{t,l} - v_l p_\alpha^2   \| < \varepsilon,
		$$
		$$
		\|b_{t,r, \alpha}^{-1} g_{t,r}   \|_{(t,r)} \bigl   \|\Phi_{t,l}^{-1}(b_{t,l})\,d_t\bigr   \|_{(t,l, -1)} < \varepsilon^{2}
		\quad \text{for all } t \in F \text{ and } r,l \in \{ 1, \ldots, N \}.
		$$
		Moreover, for each distinct $ r,l \in \{ 1, \ldots, N \}$ and $t \in F$ it holds that
		$$
		\parallel 
		\Phi_{t,l}\Bigl(\Phi_{t,r}^{-1}\, ( b_{t,r}\,) \Bigr)   b_{t,l, \alpha}^{-1}g_{t,l} \, \,\parallel_{(t,,r,l)} < \varepsilon.
		$$
	\end{enumerate}
	
\end{theorem}

\begin{proof}
	The proof follows the lines of the proof of \cite[Theorem 3.1]{AOFA, arxiv} with certain modifications. For the convenience, we give the full proof.
	
	We prove first  $ (1) \Rightarrow (2)$. Since the families $\{ T_{ \Phi_{t,1},b_{t,1} } \}_{t \in S}  , \dots ,   \{ T_{ \Phi_{t,N},b_{t,N} } \}_{t \in S}$ 
	are $d \mathcal{F}$-semi-transitive, given $\varepsilon > 0,$ some fixed $ p_{\alpha} $ and  $ u,v_1, \cdots, v_N \in \mathcal{M} $  we can find 
	$a_t \in \mathcal{M}$ and $\lambda_{t}  \in \mathbb{R}^{+}$  such that $   \| a_t - u p_\alpha   \| < \varepsilon$ and 
	$$
	\|\lambda_{t} T_{ \Phi_{t,l}},b_{t,l}  (a_t)  -v_l  p_\alpha     \| < \varepsilon   \text{ for all } t \in H \text{ and some } H \in \mathcal{F}.
	$$

	Moreover, since
	$
	\bigl\{\ {\Phi_{t,l}} \bigr\} _{  t \in S, \; 1\le l \le N }          
	$
	is a disjoint aperiodic system, we can find some $ F \in \mathcal{F}$ with $ F \subseteq H $ such that 
	$
	p_{\alpha}  \!   \Phi_{t,l}^{-1}( p_{\alpha} )  =0, \;  
	p_{\alpha}  \!   \Phi_{t,l}( p_{\alpha} )  =0, \;  
	$
	\noindent and
	$
	p_\alpha	\Phi_{t,r}( \Phi_{t,l}^{-1}( p_{\alpha} ) ) =0, \; \text{for all } \; t\in F \; \text{ and } r,l \in \{ 1, \dots N\} \text{ with } r \neq l.
	$
	Hence, we get 
	
	$$
	\qquad
	\frac{1}{|\lambda_t|}\,
	\left   \|
	\Phi_{t,r}^{-1}\!\bigl(b_{t,r, \alpha}^{-1} b_{t,r}\bigr)\, a_{t}\, \Phi_{t,r}^{-1}(p_{\alpha})\, \lambda_{t}
	\right   \|
	=\frac{1}{|\lambda_t|}\, \left   \|
	\! b_{t,r, \alpha}^{-1}\,
	\!\bigl(b_{t,r}\, \Psi_{t,r} (a_{t})\, (p_{\alpha})\, \bigr) \lambda_{t} 
	\right   \|_{(t,r)}
	$$
	$$
	= \left   \|
	a_t\,\Phi_{t,r}^{-1}(p_{\alpha})
	\right   \| =\left   \|(a_{t}-up_{\alpha})\,\Phi_{t,r}^{-1}(p_{\alpha})
	\right   \|
	\le 
	\left   \|a_{t}-up_{\alpha}\right   \|
	< \varepsilon .
	$$ and 
	$$
	\left   \|
	\lambda_t\, b_{t,r}\,\Psi_{t,r}(a_{t})\,p_{\alpha}
	\;-\; v_r p_{\alpha}^{2}
	\right   \|
	\le
	\left   \|
	\lambda_t\, b_{t,r}\,\Psi_{t,r}(a_t) - v_r p_{\alpha}
	\right   \|
	=
	\left   \|
	\lambda_{t}\,T_{t,r}(a_{t}) - v_r p_{\alpha}
	\right   \|
	< \epsilon,
	\qquad 
	$$ for all  $t\in F$ and $ r\in\{1,\dots,N\}$. \\
	For each $t\in F$ and $r\in\{1,\dots,N\},$  we put $$	g_{t,r} = \lambda_t\, T_{t,r}(a_t) p_\alpha= \lambda_t b_{t,r} \Psi_{t,r}(a_t) p_\alpha .$$ 
	Then $ g_{t,r} \in b_{t,r}\mathcal{M} p_{\alpha}$ for all $ t \in F, r \in \lbrace 1,\dots , N \rbrace,$. Moreover,$$
	\|g_{t,r}-v_r p_{\alpha}^{2}   \| \;<\; \varepsilon$$ and
	$$
	\frac{1}{|\lambda_t|}\,   \|b_{t,r, \alpha}^{-1} g_{t,r}   \|_{(t,r)}
	\;<\; \varepsilon .$$
	
	Similarly, for each $ t\in F $ and	$ l\in\{1,\dots,N\},$ we have 
	$$
	\|\lambda_t\, b_{t,l}\,\Psi_{t,l}(a_t)\,\Phi_{t,l}(p_\alpha)   \|
	\;=\;    \|(\lambda_t\, b_{t,l}\,\Psi_{t,l}(a_t)-v_l p_\alpha)\,\Phi_{t,l}(p_\alpha)   \|
	\;\le\;    \|\lambda_t\, T_{t,l}(a_t)-v_l p_\alpha   \|
	\;<\; \varepsilon ,$$
	
	and,
	$$ \| a_t p_\alpha - u p_{\alpha}^{2}   \| \leq\| a_t  - u p_{\alpha}   \| \;<\; \varepsilon .$$
	For each $ t\in F$ and $ l\in\{1,\dots,N\},$
	set $ d_t = a_t p_\alpha. $
	Then   $$  \|d_t-u p_\alpha^{2}   \| \;<\; \varepsilon $$
	and 
	$$
	|\lambda_t|\,\bigl   \|\Phi_{t,l}^{-1}(b_{t,l})\,d_t\bigr   \|_{(t,l, -1)}
	\;=\;    \|\lambda_t\, b_{t,l}\,\Psi_{t,l}(a_t p_\alpha)   \|
	\;<\; \varepsilon .
	$$

	Hence, for all $t\in F$ and $ r,l\in\{1,\dots,N\},$  we have
	$$\| b_{t,r, \alpha}^{-1} g_{t,r}    \|_{(t,r)}     \| \Phi_{t,l}^{-1} (b_{t,l}) d_{t}   \|_{(t,l, -1)} < \varepsilon^{2}. $$

	Further, for all $t\in F$ and each distinct $r,l\in\{1,\dots,N\}$  we get
	$$
	\Bigl\|\, \Phi_{t,l} \Bigl(\Phi_{t,r}^{-1}\!\bigl(b_{t,r}\bigr)\Bigr)\, b_{t,l, \alpha}^{-1} g_{t,l} \Bigr\|_{(t,r,l)}
	\;=\; \Bigl\|\,\!b_{t,r}\, \Psi_{t,r} \Psi_{t,l}^{-1}\!\bigl(b_{t,l, \alpha}^{-1}
	g_{t,l}\bigr)\Bigr\|
	$$
	$$
	=\bigl\|\,\lambda_t\, b_{t,r}\,\Psi_{t,r}(a_t)\,
	\Phi_{t,r}\!\bigl(\Phi_{t,l}^{-1}(p_\alpha)\bigr)\bigr\| 
	$$
	$$
	=\; \bigl\|\,(\lambda_t\, T_{t,r}(a_t)-v_r p_\alpha)\,
	\Phi_{t,r}\!\bigl(\Phi_{t,l}^{-1}(p_\alpha)\bigr)\bigr\|
	\;<\; \varepsilon ,
	$$
	
	\noindent
	where we have used that
	$
	p_\alpha\,\Phi_{t,r}\!\bigl(\Phi_{t,l}^{-1}(p_\alpha)\bigr)=0
	\quad\text{for all } t\in F
	\text{ and each distinct } r,l\in\{1,\dots,N\}.
	$

	Now we prove the implication $(2)\Rightarrow(1)$. 
	Let $O,V_1,\dots,V_N$ be non-empty open subsets of $\mathcal{M}$. 
	Then, by the condition (P*) we can find a non-zero $u \in O$, non-zero $v_l\in V_l$ for each 
	$l\in\{1,\dots,N\}$ and some  $\alpha$ such that
	$ u p_\alpha^{2} \in O $
	and
	$ v_l p_\alpha^{2}\in V_l\setminus\{0\} $
	for each $l=1,\dots,N$. Clearly, this gives that $ u p_\alpha^2 \neq 0 $ and $ v_l p_\alpha^2 \neq 0 $ for each $l=1,\dots,N$. Also, since $O\setminus\{0\},\,V_1\setminus\{0\},\dots, V_N\setminus\{0\}$ are open and non-empty,
	there is some $\delta>0$ such that the $\delta$-neighbourhood of $u p_\alpha^{2} $ is
	contained in $O \setminus\{0\}$, and the $\delta$-neighbourhood of $v_l p_\alpha^{2}$ is contained
	in $V_l\setminus\{0\}$ for each $l=1,\dots,N$.

	$$ \varepsilon
	:=\min\!\left\{ 
	\frac{\delta}{4 N}, \| u p_\alpha^2  \|, \| v_1 p_\alpha^2  \|, \cdots , \| v_N p_\alpha^2  \|
	\right\}. $$
	
	Choose $F\in  \mathcal{F}$ and the families 
	$ \{ g_{t,1} \}_{t\in F}, \dots  , \{ g_{t,N} \}_{t\in F}, \{d_{t} \}_{t \in F} $ 
	satisfying the assumptions in $(2)$ with respect to $ \epsilon $ and $p_{\alpha} $.For each $ \alpha $ and every $ t \in F, l\in\{1,\dots,N\}  $ we define the map $ S_{t,l, \alpha}: b_{t,l}  \mathcal{M}p_\alpha \rightarrow \mathcal{M} $ by $$ S_{t,l, \alpha}(b_{t,l} a p_\alpha ) = \Psi_{t,l}^{-1}(b_{t,l, \alpha}^{-1} b_{t,l} a p_\alpha ) $$ for all $  a \in \mathcal{M} .$ If $ b_{t,l} a p_\alpha =0 $ for some $  a \in \mathcal{M}$, by the condition $(C) $ it follows that $ a p_\alpha =0 .$ However, $b_{t,l, \alpha}^{-1} b_{t,l} a p_\alpha = a p_\alpha $ (by the definition of $\alpha-$inverse), hence, since $ \Psi_{t,l} $ is a linear isomorphism, it follows that $ S_{t,l, \alpha}(b_{t,l} p_\alpha a)=0, $ so $S_{t,l, \alpha} $ is well-defined.
	
	Next we observe that for each $t\in F$ and $l\in\{1,\dots,N\}$ it holds that
	$$
	\parallel T_{t,l} (d_{t})\parallel
	= \parallel\, b_{t,l}\,\Psi_{t,l} \bigl(d_{t} \bigr)\parallel
	= \parallel\,  \Phi_{t,l}^{-1}(b_{t,l}) d_{t}\parallel_{(t,l, -1)}
	$$
	and
	$$
	\parallel S_{t,l, \alpha} \bigl(g_{t,l} \bigr)\parallel
	= \parallel\, \Psi_{t,l}^{-1}(b_{t,l, \alpha}^{-1} g_{t,l} ) \parallel
	= \parallel b_{t,l, \alpha}^{-1} g_{t,l} 
	\parallel_{(t,l)}.
	$$
	
	Moreover, for all $t\in F$ and each distinct $r,l \in \lbrace 1,\dots,N \rbrace$ we have
	$$
	\parallel T_{t,r}\!\bigl(S_{t,l, \alpha}(g_{t,l})\bigr)\parallel
	= \parallel\,  b_{t,r}\, \Psi_{t,r}\, ( \Psi_{t,l}^{-1}\, (b_{t,l, \alpha}^{-1} g_{t,l} \bigr) \bigr)\,\parallel
	$$
	$$
	= \parallel 
	\Phi_{t,l}\Bigl(\Phi_{t,r}^{-1}\, ( b_{t,r}\,) \Bigr)   b_{t,l, \alpha}^{-1}g_{t,l} \parallel_{(t,,r,l)}.
	$$
	Finally, since $ b_{t,l, \alpha}^{-1} $ is the $ \alpha$-inverse of $b_{t,l} ,$ it follows that $$ T_{t,l} \bigl(S_{t,l, \alpha}(g_{t,l})) = g_{t,l} $$ for all $t\in F$ and each $ l \in \lbrace 1,\dots,N \rbrace .$

	Since $\| d_t -u p_\alpha^2  \| <  \epsilon \leq  \| u p_\alpha^2  \|$, 
	it follows that $d_t  \neq 0.$ Similarly, $g_{t,l} \neq 0$  for all
	$t \in F$ and $l \in \{1, \dots, N\}$. 
	Hence, since by the assumption we have that for each $ t \in \mathcal{S} $ and $l \in \{1, \dots, N\}$ it holds that   $ b_{t,l} a = 0 $ if and only if $ a = 0,$ it follows that $T_{t, l}(d_t ) \neq 0 $ for each $ t \in F$ and $l \in \{1, \dots, N\}.$  Similarly, since $ b_{t,l, \alpha}^{-1} $ is the $ \alpha$-inverse of $b_{t,l} ,$  it follows that   $S_{t, l, \alpha}(g_{t,l}) \neq 0$ for all $t \in F$ and $l \in {1, \dots, N}.$

	For each $t\in F$, we put
	$$
	x_{t} := d_{t}  + \frac{\sqrt{\sum_{l=1}^{N}\,\parallel T_{t,l}(d_t )\parallel}}{\sqrt{\sum_{l=1}^{N}\,\parallel S_{t,l, \alpha}(g_{t,l})\parallel} \qquad } \, {\sum_{r=1}^{N}\, S_{t,r, \alpha}(g_{t,r} )}.
	$$
	
	By triangle inequality and choice of $ \varepsilon$ one can deduce that for each $t\in F$, we have
	$$
	x_{t} \in O, \ \text{and} \ 
	\frac{
		\sqrt{ \sum_{l = 1}^{N} \| S_{t, l, \alpha}( g_{t, l}  ) \| }
	}{
		\sqrt{ \sum_{l = 1}^{N} \| T_{t, l}( d_t  ) \| }
	}
	T_{t, r}(x_t) \in V_r.
	$$
	for all $r \in \lbrace 1, \dots , N \rbrace.$ 
\end{proof}

\bibliographystyle{amsplain}

\begin{thebibliography}{99}
	
	
	
	\bibitem{AP1} P.Aiena, \textit{Fredholm and Local Spectral Theory, with Applications to Multipliers}, Kluwer 2004, ISBN 978-1-4020-2525-9
	
	\bibitem{AP2} P.Aiena, \textit{Fredholm and Local spectral Theory II, Lecture Notes in Mathematics}, 2235, 2018, ISBN 978-3-030-02266-2
	
	\bibitem{BS} M. Berkani and M. Sarih, \textit{On semi B-Fredholm operators}, Glasgow Mathematical Journal, Volume 43, Issue 3. May 2001 , pp. 457-465, DOI: https://doi.org/10.1017/S0017089501030075
	
	\bibitem{BM} M. Berkani, \textit{Index of B-Fredholm operators and Generalizations of a Weyl Theorem}, Proceedings of the American Mathematical Society. Amer. Math. Soc., Providence, RI. ISSN 0002-9939 , Volume 130, Number \textbf{6}, Pages 1717-1723, S 0002-9939(01)06291-8, Article electronically published on October 17, 2001
	
	\bibitem{BR} M. Breuer,  \textit{Fredholm theories in von Neumann algebras}, I. Math. Ann. 178, 243–254 (1968). https://doi.org/10.1007/BF01350663
	
	\bibitem{BR2} M. Breuer,  \textit{Fredholm theories in von Neumann algebras. II}, Math. Ann. 180, 313–325 (1969). https://doi.org/10.1007/BF01351884
	
	\bibitem{cao} X. Cao, \textit{Limits of hypercyclic and supercyclic operator matrices}, J. Aust. Math. Soc. \textbf{85}, 367-376, (2008). 

	\bibitem{DDj} Dragan S. \DJ{}or\dj{}evi\'{c}, \textit{ Perturbations of spectra of operator matrices}, J. Operator Theory 48(2002), 467-486.
	
	 \bibitem{F} E. I. Fredholm, \textit{Sur une classe d'equations fontionnelles}, Acta Math. \textbf{27} (1903), 365--390.
	 
	 \bibitem{Fredholm-3} M. Gonz{\'a}lez, F. Le{\'o}n-Saavedra, A. Montes-Rodr{\'i}guez, \emph{Semi-Fredholm Theory: Hypercyclic and Supercyclic Subspaces}, Proc. Lond. Math. Soc., \textbf{81}, (2000). 
	
	\bibitem{HLL} J.H. Han, H.Y. Lee, W.Y. Lee, Invertible completions of $2 \times 2$ upper triangular operator matrices, Proc. Amer.Math. Soc. 128 (2000), 119-123.
	
	\bibitem{AOFA} S. Ivković,  \textit{Disjoint 
		$\mathcal{F}$-semi-transitivity in Banach algebras}. Ann. Funct. Anal. \textbf{17}, 43 (2026). https://doi.org/10.1007/s43034-026-00517-w
		
\bibitem{arxiv}	S. Ivković,  \textit{Disjoint 
	$\mathcal{F}$-semi-transitivity in Banach algebras}, preprint (2026)	
		https://doi.org/10.48550/arXiv.2506.06529
	
	\bibitem{filomat} S. Ivković, \textit{ On $\mathcal{F}-$semi-transitive families of operators,} Filomat 40:7 (2026), 2425–2443
	https://doi.org/10.2298/FIL2607425I

	\bibitem{IS1} S. Ivkovi\'{c} , \textit{Semi-Fredholm theory on Hilbert C*-modules,} Banach Journal of Mathematical Analysis, Vol. \textbf{13} (2019) no. 4 2019, 989-1016 doi:10.1215/17358787-2019-0022. https://projecteuclid.org/euclid.bjma/1570608171
	
	\bibitem{IS3} S. Ivkovi\'{c}, \textit{On operators with closed range and semi-Fredholm operators over W*-algebras}, Russ. J. Math. Phys. 27, 4860 (2020)  http://link.springer.com/article/10.1134/S1061920820010057
	
	\bibitem{IS4} S. Ivkovi\'{c}, \textit{On compressions and generalized spectra of operators over C*-algebras.},  Ann. Funct. Anal. 11, 505522 (2020). https://doi.org/10.1007/s43034-019-00034-z
	
	\bibitem{IS5} S. Ivkovi\'{c}, \textit{On various generalizations of semi-A-Fredholm operators}, Complex Anal. Oper. Theory 14, 41 (2020). https://doi.org/10.1007/s11785-020-00995-3 
	
	\bibitem{IS6} S. Ivkovi\'{c}, \textit{On Upper Triangular Operator $2  \times 2$ Matrices Over C*-Algebras}, FILOMAT, (2020), vol. 34 no. 3, 691-706 https://doi.org/10.2298/FIL2003691I 
	
	\bibitem{IS13} S. Ivkovi\'{c},  \textit{ On Drazin invertible C*-operators and generalized C*-Weyl operators}, Ann. Funct. Anal. 14, 36 (2023). https://doi.org/10.1007/s43034-023-00258-0
	
	\bibitem{BJMA}  S. Ivkovi\'{c} , \textit{Semi-Fredholm theory in C*-algebras,} Banach J. Math. Anal, \textbf{17}, 51 (2023),\\
	https://doi.org/10.1007/s43037-023-00277-y  
	
	\bibitem{IS10} S. Ivkovi\'{c}, \textit{On New Approach to Fredholm Theory in unital C*-Algebras}, https:/doi.org/10.48550/arXiv.2306.01133  
	
	\bibitem{KL} Ke\v{c}ki\'{c}, D.J., Lazovi\'{c}, Z. Fredholm operators on C*-algebras. ActaSci.Math. 83, 629â€“655 (2017). https://doi.org/10.14232/actasm-015-526-5
	
	\bibitem{LAY} D.Lay, \textit{Spectral analysis using ascent, descent, nullity and defect}, Math. Ann. \textbf{184}(1970), 197-214.
	
	\bibitem{novo-prvi} Y. Liang, Z. Zhou,  { \it Disjoint supercyclic weighted composition operators}, Bull. Korean Math. Soc. 55(4), (2018), 1137-1147
	
	\bibitem{MT} V. M. Manuilov, E. V. Troitsky, \textit{Hilbert C*-modules}, In: Translations of Mathematical Monographs. 226, American Mathematical Society, Providence, RI, 2005.
	
	\bibitem{MF} A. S. Mishchenko, A.T. Fomenko, \textit{The index of eliptic operators over C*-algebras}, Izv. Akad. Nauk SSSR Ser. Mat. \textbf{43} (1979), 831--859; English transl., Math. USSR-Izv.\textbf{15} (1980) 87--112.
	
	\bibitem{PO}  A. Pokrzywa, A characterizations of the Weyl spectum, Proc. Am. math. Soc. 92. 215-218.
	
	\bibitem{SE} M. O. Searooid, \textit{The continuity of the semi-Fredholm index}, IMS Bulletin \textbf{29}(1992), 13-18.
	
	\bibitem{W} N. E. Wegge Olsen, \textit{ K-theory and C*-algebras}, Oxford Univ. Press, Oxford,  1993.	
	
	\bibitem{ZE} J. Zemanek, Compressions and the Weyl-Browder spectra, Proc. Roy. Irish Acad. Sec. A 86 (1986), 57-62.
	
	\bibitem{ZZRD} S. \v{Z}ivkovi\'{c} Zlatanovi\'{c},  \textit{An Introduction into Fredholm theory and Generalized Drazin-Riesz Invertible Operators}, 
	Zbornik Radova 20(28) (2022), 113 - 198 Issue 20(28) - Zbornik Radova - eLibrary of Mathematical Institute SASA (sanu.ac.rs)
	
	
	
	
	
	
	
	
\end{thebibliography}

\end{document}